\definecolor{crimson}{rgb}{0.86, 0.08, 0.24}
\definecolor{awesome}{rgb}{1.0, 0.13, 0.32}
\definecolor{newgreen}{rgb}{0.0,0.6,0.0}
\renewcommand{\leq}{\leqslant}
\renewcommand{\geq}{\geqslant}
\newcommand{\mleq}{\preccurlyeq}
\newcommand{\mgeq}{\succcurlyeq}
\newcommand{\di}{\mathrm{d}}
\newcommand{\eps}{\varepsilon}
\newcommand{\argmin}{\mathop{\mathrm{arg}\,\mathrm{min}}}
\newcommand{\wt}{\widetilde}
\newcommand{\wh}{\widehat}
\newcommand{\R}{\mathbf R}
\newcommand{\ie}{{i.e.}\@\xspace} 
\newcommand{\eg}{e.g.\@\xspace}
\newcommand{\iid}{i.i.d.\@\xspace}
\newcommand{\vspan}{\mathrm{span}}
\renewcommand{\ker}{\mathop{\mathrm{ker}}}
\newcommand{\Id}{I_d}%
\newcommand{\id}{\Id}
\newcommand{\tr}{\mathrm{Tr}}
\newcommand{\opnorm}[1]{\| {#1} \|_{\mathrm{op}}}
\newcommand{\dist}{\mathrm{dist}}
\newcommand{\E}{\mathbb E}
\renewcommand{\P}{\mathbb P}
\newcommand{\Var}{\mathrm{Var}}
\newcommand{\var}{\Var}
\newcommand{\cond}{|}%
\newcommand{\probas}{\mathcal{P}}
\newcommand{\indic}[1]{\bm 1 ( #1 )}
\newcommand{\kl}{\mathrm{KL}}%
\newcommand{\kll}[2]{\kl ({#1}, {#2})}%
\newcommand{\gaussdist}{\mathcal{N}}
\newcommand{\F}{\mathcal{F}}
\newcommand{\loss}{\mathcal{L}}
\newcommand{\risk}{\mathcal{R}}
\newcommand{\excessrisk}{\mathcal{E}}
\newcommand{\ls}[1]{\wh {#1}_n^{\mathrm{LS}}} %
\newcommand{\lsb}{\ls{\beta}}
\newcommand{\ridgeb}{\wh \beta_{\lambda, n}}
\newcommand{\wellclass}{\probas_{\mathrm{well}}}%
\newcommand{\gaussclass}{\probas_{\mathrm{Gauss}}}%
\newcommand{\misclass}{\probas_{\mathrm{mis}}}%
\newcommand{\lamin}{\lambda_{\mathrm{min}}}
\newcommand{\lamax}{\lambda_{\mathrm{max}}}
\theoremstyle{plain}
\newtheorem{proposition}{Proposition}%
\newtheorem{theorem}{Theorem}
\newtheorem{lemma}{Lemma}
\newtheorem{corollary}{Corollary}
\newtheorem{fact}{Fact}
\theoremstyle{definition}
\newtheorem{definition}{Definition}
\newtheorem{assumption}{Assumption}%
\theoremstyle{remark}
\newtheorem{remark}{Remark}
\title{Exact minimax risk for linear least squares, and the lower tail of sample covariance matrices}
\author{Jaouad Mourtada\footnote{CREST, ENSAE, Institut Polytechnique de Paris, France; \texttt{jaouad.mourtada@ensae.fr}}}
\date{}
\begin{document}

\maketitle

  \begin{abstract}
  We consider random-design linear prediction and related questions on the lower tail of random matrices.
  It is known that, under boundedness constraints, the minimax risk is of order $d/n$ in dimension $d$ with $n$ samples.
  Here, we study the minimax expected excess risk over the full linear class, depending on the distribution of covariates.
  First, the least squares estimator is exactly minimax optimal in the well-specified case, for every distribution of covariates.
  We express the minimax risk in terms of the distribution of statistical leverage scores of individual samples,
  and deduce a minimax lower bound of $d/(n-d+1)$ for any covariate distribution, nearly matching the risk for Gaussian design.
  We then obtain sharp nonasymptotic upper bounds for covariates that satisfy a ``small ball''-type regularity condition in both well-specified and misspecified cases.
  
  Our main technical contribution is the study of the lower tail of the smallest singular value of empirical covariance matrices at small values.
  We establish a lower bound on this lower tail, valid for any distribution in dimension $d \geq 2$, together with a matching upper bound under a necessary regularity condition.
  Our proof relies on the PAC-Bayes technique for controlling empirical processes, and extends an analysis of Oliveira devoted to a different part of the lower tail.

\end{abstract}

\section{Introduction}
\label{sec:introduction}

Linear least-squares regression, also called {random-design linear regression} or {linear aggregation}, is one of the basic statistical prediction problems.
Specifically, given a random pair $(X, Y)$ where $X$ is a covariate vector in $\R^d$ and $Y$ is a scalar response, the aim is to predict $Y$ using a linear function $\langle \beta, X\rangle = \beta^\top X$ (with $\beta \in \R^d$) of $X$ as well as possible, in a sense measured by the prediction risk with squared error $R (\beta) = \E [ (Y - \langle \beta, X\rangle)^2 ]$.
The best prediction is achieved by the population risk minimizer $\beta^*$, which equals:
\begin{equation*}
  \beta^* = \Sigma^{-1} \E [Y X]
\end{equation*}
where $\Sigma := \E [X X^\top]$, assuming that both $\Sigma$ and $\E [YX]$ are well-defined and that $\Sigma$ is invertible.
In the statistical setting considered here, the joint distribution $P$ of the pair $(X,Y)$ is unknown.
The goal is then, given a sample $(X_1, Y_1), \dots, (X_n, Y_n)$ of $n$ \iid realizations of $P$, to find a predictor (also called \emph{estimator}) $\wh \beta_n$ with small \emph{excess risk}
\begin{equation*}
  \excessrisk_P (\wh \beta_n)
  := R (\wh \beta_n) - R (\beta^*)
  = \| \wh \beta_n - \beta^* \|_\Sigma^2
  \, ,
\end{equation*}
where we define $\| \beta \|_\Sigma^2 := \langle \Sigma \beta, \beta \rangle = \| \Sigma^{1/2} \beta \|^2$.
Arguably the most common procedure is the \emph{Ordinary Least Squares} (OLS) estimator (that is, the empirical risk minimizer), defined by
\begin{equation*}
  \lsb := \argmin_{\beta \in \R^d} \bigg\{ \frac{1}{n} \sum_{i=1}^n ( Y_i - \langle \beta, X_i \rangle )^2 \bigg\}
  = \wh \Sigma_n^{-1} \cdot \frac{1}{n} \sum_{i=1}^n Y_i X_i
  \, ,
\end{equation*}
with $\wh \Sigma_n := n^{-1} \sum_{i=1}^n X_i X_i^\top$ the sample covariance matrix.

Linear classes are of particular importance to regression problems, both in themselves and since they naturally appear in the context of nonparametric estimation \cite{gyorfi2002nonparametric,tsybakov2009nonparametric}.
In this note, we analyze this problem from a decision-theoretic perspective, focusing on the 
minimax excess risk with respect to the full linear class $\F = \{ x \mapsto \langle \beta, x\rangle : \beta \in \R^d \}$, and in particular on its dependence on the distribution of $X$.
The minimax perspective is relevant when little is known or assumed on the optimal parameter $\beta^*$.
Specifically, define the \emph{minimax excess risk} (see, e.g., \cite{lehmann1998tpe}) with respect to $\F$ under a set $\probas$ of joint distributions $P$ on $(X,Y)$ as:
\begin{equation}
  \label{eq:minimax-excess-risk-linear}
  \inf_{\wh \beta_n} \sup_{P \in \probas} \E [ \excessrisk_P (\wh \beta_n) ]
  = \inf_{\wh \beta_n} \sup_{P \in \probas} \Big( \E [ R (\wh \beta_n) ] - \inf_{\beta \in \R^d} R (\beta) \Big)
  \, ,
\end{equation}
where the infimum in~\eqref{eq:minimax-excess-risk-linear} spans over all estimators $\wh \beta_n$ based on $n$ samples, while the expectation and the risk $R$ depend the underlying distribution $P$.
Our aim is to characterize the influence of the distribution $P_X$ of covariates on the hardness of the problem.
Hence, our considered classes $\probas$ of distributions are obtained by fixing the marginal distribution of $X$, and letting the optimal regression parameter $\beta^*$ vary freely in $\R^d$ (see Section~\ref{sec:minimax-ls}).

Some minimal regularity condition on the distribution $P_X$ is required to ensure even finiteness of the minimax risk~\eqref{eq:minimax-excess-risk-linear} in the random-design setting.
Indeed, assume that the distribution $P_X$ charges some positive mass on a hyperplane $H \subset \R^d$ (we call such a distribution \emph{degenerate}, see Definition~\ref{def:degenerate}).
Then, with positive probability, all points $X_1, \dots, X_n$ in the sample lie within $H$, so that the component of the optimal parameter $\beta^*$ which is orthogonal to $H$ cannot be estimated.
However, this component matters for out-of-sample prediction, in case the point $X$ for which one wishes to compute prediction does not belong to $H$.
Such a degeneracy (or quantitative variants, where $P_X$ puts too much mass at the neighborhood of a hyperplane) turns out to be the main obstruction to achieving controlled uniform excess risk over~$\R^d$.

The second part of this note (Section~\ref{sec:small-ball-bounds}) is devoted to the study of the \emph{sample covariance matrix}
\begin{equation}
  \label{eq:def-sample-covariance}
  \wh \Sigma_n := \frac{1}{n} \sum_{i=1}^n X_i X_i^\top
  \, ,
\end{equation}
where $X_1, \dots, X_n$ are \iid samples from $P_X$.
Indeed, upper bounds on the minimax risk require a control of relative deviations of the empirical covariance matrix $\wh \Sigma_n$ with respect to its population counterpart $\Sigma$, in the form of \emph{negative moments} of the rescaled covariance matrix $\wt \Sigma_n := \Sigma^{-1/2} \wh \Sigma_n \Sigma^{-1/2}$, namely
\begin{equation}
  \label{eq:neg-moments}
  \E [ \lamin(\wt \Sigma_n)^{-q} ]
\end{equation}
where $q \geq 1$ and $\lamin(A)$ is the smallest eigenvalue of symmetric matrix $A$.

Control of lower relative deviations of $\wh \Sigma_n$ with respect to $\Sigma$ can be expressed in terms of lower-tail bounds, of the form
\begin{equation}
  \label{eq:tail-bound-lamin}
  \P \big( \lamin ( \wt \Sigma_n ) \leq t \big)
  \leq \delta
  \, ,
\end{equation}
where $t, \delta \in (0, 1)$.
Sub-Gaussian tail bounds for $\lamin( \wt \Sigma_n)$, of the form~\eqref{eq:tail-bound-lamin} with
\begin{equation*}
  \delta = \exp \Big({- c n \Big(1- C \sqrt{\frac{d}{n}} - t \Big)_+^2} \Big)
\end{equation*}
for some constants $c, C$ depending on $P_X$, as well as similar bounds for the largest eigenvalue $\lamax ( \wt \Sigma_n)$, can be obtained under the (strong) assumption that $X$ is sub-Gaussian (see, \eg, \cite{vershynin2012introduction}).
Remarkably, it is shown in \cite{oliveira2016covariance,koltchinskii2015smallest} that such bounds can be obtained for the \emph{smallest} eigenvalue under much weaker assumptions on $X$, namely bounded fourth moments of linear marginals of $X$.

While sub-Gaussian bounds provide a precise control of deviations~\eqref{eq:tail-bound-lamin} for $t \in (c, 1 - C \sqrt{d/n})$ (for some constants $c, C$), they do not suffice to control moments of $\lamin (\wt \Sigma_n)^{-1}$.
Indeed, such bounds ``saturate'' in the sense that $\delta = \delta (t)$ does not tend to $0$ as $t \to 0$; in other words, they provide no nonvacuous guarantee~\eqref{eq:tail-bound-lamin} with $t > 0$ as the confidence level $1-\delta$ tends to~$1$.
This prevents one from integrating such tail bounds and deduce a control of moments of the form~\eqref{eq:neg-moments}.
In fact, the covariance matrix of a sub-Gaussian matrix can be singular with positive probability (exponentially small in $n$), for instance for matrices with independent Bernoulli entries; in order to ensure invertibility at all confidence levels, different regularity assumptions are required.
In Section~\ref{sec:small-ball-bounds}, we complement the sub-Gaussian tail bounds by a study of non-asymptotic large deviation bounds~\eqref{eq:tail-bound-lamin} with $\delta = \exp ( - n \psi (t) )$ for small values of $t$, namely $t \in (0, c)$.

\subsection{Summary of contributions}
\label{sec:summ-contrib}

Below is an overview of our results on least squares regression, which appear in Section~\ref{sec:minimax-ls}:

\begin{enumerate}
\item We determine the minimax excess risk in the {well-specified case} (where the true regression function $x \mapsto \E [ Y \cond X=x]$ is linear) for every distribution $P_X$ of features and noise level $\sigma^2$.
  For some ``degenerate'' distributions (Definition~\ref{def:degenerate}), the minimax risk is infinite (Proposition~\ref{prop:degenerate-minimax-risk}); while for non-degenerate ones, the OLS estimator is exactly minimax (Theorem~\ref{thm:ls-minimax}) irrespective of $P_X, \sigma^2$.
\item We express the minimax risk in terms of the distribution of \emph{statistical leverage scores} of samples drawn from $P_X$ (Theorem~\ref{thm:minimax-leverage-score}).
  Quite intuitively, distributions of $X$ for which leverage scores are uneven are seen to be harder from a minimax point of view.
  We deduce a precise minimax lower bound of $\sigma^2 d / (n - d + 1)$, valid for every distribution $P_X$ of covariates.
  This lower bound nearly matches the $\sigma^2 d / (n - d - 1)$ risk for centered Gaussian covariates, in both low ($d/n \to 0$) and moderate ($d/n \to \gamma \in (0, 1)$) dimensions; hence, Gaussian covariates are almost the ``easiest'' ones in terms of minimax risk.
  This provides a counterpart to results obtained in the moderate-dimensional regime for \emph{independent} covariates from the Marchenko-Pastur law.
\item We then turn to {upper bounds} on the minimax risk.
  Under some quantitative variant of the non-degeneracy assumption (Assumption~\ref{ass:small-ball}) together with a fourth-moment condition on $P_X$ (Assumption~\ref{ass:fourth-moment-norm} or~\ref{ass:equiv-l4-l2}), we show that the minimax risk is finite and scales as $(1 + o(1)) \sigma^2 d/n$ when $d = o (n)$, both in the well-specified (Theorem~\ref{thm:upper-well-spec}) and misspecified (Proposition~\ref{prop:upper-misspecified}) cases.
  In particular, OLS is asymptotically minimax in the misspecified case as well, as $d/n \to 0$.
  To our knowledge, this gives the first bounds on the expected risk of the OLS estimator for general random design distribution.
\end{enumerate}
The previous upper bounds rely on the study of the lower tail of the sample covariance matrix $\wh \Sigma_n$, carried out in Section~\ref{sec:small-ball-bounds}.
Our contributions here are the following (assuming, to simplify notation, that $\E [X X^\top] = I_d$):
\begin{enumerate}
\item[4.] First, we establish a \emph{lower bound} on the lower tail of $\lamin (\wh \Sigma_n)$, for $d \geq 2$ and \emph{any} distribution $P_X$ such that $\E [ X X^\top ] = I_d$, of the form: $\P (\lamin (\wh \Sigma_n) \leq t) \geq (c t)^{n/2}$ for some numerical constant $c$ and every $t \in (0, 1)$ (Proposition~\ref{prop:small-ball-lower}).
  We also exhibit a ``small-ball'' condition (Assumption~\ref{ass:small-ball}) which is necessary to achieve similar upper bounds.
\item[5.] Under Assumption~\ref{ass:small-ball}, we show a matching \emph{upper bound} on the lower tail $\P (\lamin (\wh \Sigma_n) \leq t)$, valid {for all $t\in (0, 1)$}, and in particular for small $t$.
  This result (Theorem~\ref{thm:lower-tail-covariance}) is the core technical contribution of this paper.
  Its proof relies the PAC-Bayesian technique for controlling empirical processes, which was used by \cite{oliveira2016covariance} to control a different part of the lower tail;
  however, some non-trivial refinements (such as non-Gaussian smoothing) are needed to handle small values of $t$.
  This result can be equivalently stated as an upper bound on moments of $\lamin (\wh \Sigma_n)^{-1}$, namely $\| \lamin (\wh \Sigma_n)^{-1} \|_{L^q} = O (1)$ for $q \asymp n$ (Corollary~\ref{cor:negative-moments-smallest}).
\item[6.] Finally, we discuss in Section~\ref{sec:small-ball-indep} the case of independent covariates.
  In this case, the ``small-ball'' condition (Assumption~\ref{ass:small-ball}) holds naturally under mild regularity assumptions on the distribution of individual coordinates.
  A result of \cite{rudelson2014small} establishes this for coordinates with bounded density;
  we complement it by a general anti-concentration result for linear combination of independent variables (Proposition~\ref{prop:uniform-small-ball}),
  implying Assumption~\ref{ass:small-ball} for sufficiently ``non-atomic'' coordinates.
\end{enumerate}

\subsection{Related work}
\label{sec:related-work}

Linear least squares regression is a classical problem, and the literature on this topic is too vast to be surveyed here; we
refer to \cite{gyorfi2002nonparametric,audibert2010linear,hsu2014ridge} (and references therein) for a more thorough overview.
In addition, while we focus on mean-squared prediction error, different criteria can be considered, as in the predictive inference literature \cite{rinaldo2019bootstrapping}.
Analysis of least squares regression is most standard and straightforward in the \emph{fixed design} setting, where the covariates $X_1, \dots, X_n$ are
deterministic and the risk is evaluated within-sample;
in this case, the expected excess risk of the OLS estimator is bounded by $\sigma^2 d / n$ (see, \eg, \cite{hsu2014ridge}).

In the random design setting considered here, a classical result \cite[Theorem~11.3]{gyorfi2002nonparametric}
states that, if $\var (Y \cond X) \leq \sigma^2$ and the true regression function $g^* (x) = \E [Y \cond X = x] $ satisfies $| g^* (X) | \leq L$ almost surely, then the risk $R (g) = \E [ (g(X) - Y)^2 ]$ of the (nonlinear) \emph{truncated} ERM estimator, defined by $\wh g^L_n (x) = \min (-L, \max (L, \langle \lsb, x\rangle))$, is at most
\begin{equation}
  \label{eq:bound-gyorfi-truncated}
  \E [ R (\wh g_n^{L}) ] - R (g^*)
  \leq 8 \big( R (\beta^*) - R (g^*) \big) + C \max (\sigma^2, L^2) \frac{d (\log n + 1)}{n}
\end{equation}
for some universal constant $C > 0$.
This result is an \emph{inexact oracle inequality}, where the risk
is bounded by a constant times that of the best linear predictor $\beta^*$.
Such guarantees are adequate %
in a nonparametric setting,
where the approximation error $R (\beta^*) - R (g^*)$ of the linear model is itself of order $O(d/n)$ \cite{gyorfi2002nonparametric}.
On the other hand, when no assumption is made on the magnitude of the approximation error, this bound does not ensure that the risk of the estimator approaches that of $\beta^*$.
By contrast, in the \emph{linear aggregation} problem as defined by \cite{nemirovski2000nonparametric} (and studied by \cite{tsybakov2003optimal,catoni2004statistical,bunea2007aggregation,audibert2011robust,hsu2014ridge,lecue2016performance,mendelson2015smallball,oliveira2016covariance}), one seeks to obtain excess risk bounds, also called \emph{exact} oracle inequalities (where the constant $8$ in the bound~\eqref{eq:bound-gyorfi-truncated} is replaced by $1$), with respect to the linear class.
In this setting, Tsybakov \cite{tsybakov2003optimal} showed that the minimax rate of aggregation is of order $O (d/n)$, under boundedness assumptions on the regression function and on covariates.
It is also worth noting that bounds on the regression function also implicitly constrain the optimal regression parameter to lie in some ball.
This contrasts with the approach considered here, where minimax risk with respect to the full linear class is considered.
Perhaps most different from the point of view adopted here is the approach from \cite{foster1991prediction,vovk2001competitive,azoury2001relative,shamir2015sample,bartlett2015minimax},
whose authors consider worst-case covariates (either in the individual sequences  or in the agnostic learning setting) under boundedness assumptions on both covariates and outputs, and investigate achievable excess risk (or regret) bounds with respect to bounded balls in this case.
By contrast, we take the distribution of covariates as given and allow the optimal regression parameter to be arbitrary, and study under which conditions on the covariates uniform bounds are achievable.
Another type of non-uniform guarantees over linear classes is achieved by Ridge regression \cite{hoerl1962ridge,tikhonov1963regularization} in the context of reproducing kernel Hilbert spaces  \cite{cucker2002best,cucker2002mathematical,devito2005model,caponnetto2007optimal,smale2007integral,steinwart2009optimal,audibert2011robust,hsu2014ridge}, where the bounds do not depend explicitly on the dimension $d$, but rather on spectral properties of $\Sigma$ and some norm of $\beta^*$.

This work is concerned with the expected risk.
Risk bounds in probability are obtained, among others, by \cite{audibert2011robust,hsu2014ridge,hsu2016heavy,oliveira2016covariance,mendelson2015smallball,lecue2016performance}.
While such bounds hold with high probability, the probability %
is upper bounded and cannot be arbitrarily close to $1$, so that they cannot be integrated to control the expected risk.
Indeed, some additional regularity conditions are required in order to have finite minimax risk, as will be seen below.
To the best of our knowledge, the only available uniform expected risk bounds for random-design regression are obtained in the case of Gaussian covariates, where they rely on the knowledge of the closed-form distribution of inverse covariance matrices \cite{stein1960multiple,breiman1983many,anderson2003introduction}.
One reason for considering the expected risk is that it is a single scalar, which can be more tightly controlled (in terms of matching upper and lower bounds) and compared across distributions than quantiles.
In addition, random-design linear regression is a classical statistical problem, which justifies its precise decision-theoretic analysis.
On the other hand, expected risk only provides limited information on the tails of the risk in the high-confidence regime: in the case of heavy-tailed noise, the OLS estimator may perform poorly, and dedicated robust estimators may be required (see, \eg, \cite{audibert2011robust} and the references in \cite{lugosi2019survey}).

Another line of work \cite{elkaroui2013asymptotic,dicker2016ridge,donoho2016robust-amp,elkaroui2018predictor,dobriban2018ridge} considers the limiting behavior of regression procedures in the high-dimensional asymptotic regime where $d, n$ tend to infinity at a proportional rate, with their ratio kept constant \cite{huber1973robust}.
The results in this setting take the form of a convergence in probability of the risk to a limit depending on the ratio $d/n$ as well as the properties of $\beta^*$.
With the notable exception of \cite{elkaroui2018predictor}, the previous results hold under the assumption that the covariates are either Gaussian, or have a joint independence structure that leads to the same limiting behavior in high dimension.
In contrast, here we consider non-asymptotic bounds valid for fixed $n,d$, general design distribution and uniformly over $\beta^* \in \R^d$.

The study of spectral properties of sample covariance matrices has a rich history (see for instance \cite{bai2010spectral,anderson2010introduction,tao2012topics} and references therein); we refer to \cite{rudelson2010nonasymptotic} for an overview of results (up to 2010) on the non-asymptotic control of the smallest eigenvalue of sample covariance matrices, which is the topic of Section~\ref{sec:small-ball-bounds}.
It is well-known \cite{vershynin2012introduction} that sub-Gaussian tail bounds on both the smallest and largest eigenvalues can be obtained under sub-Gaussian assumptions on covariates (see also \cite{koltchinskii2017operators} for operator norm concentration under general population covariance).
A series of work obtained control on these quantities under weaker assumptions \cite{adamczak2010logconcave,mendelson2014singular,srivastava2013covariance,tikhomirov2018sample}.
A key observation, which has been exploited in a series of work \cite{srivastava2013covariance,koltchinskii2015smallest,oliveira2016covariance,yaskov2014lower,yaskov2015sharp,vandegeer2014higher}, is that the smallest eigenvalue can be controlled under much weaker tail assumptions than the largest one.
Our study follows this line of work, but considers a different part of the lower tail, which poses additional technical difficulties;
we also provide a general lower bound on the lower tail.

\paragraph{Notation.}

Throughout this text, the transpose of an $m \times n$ real matrix $A$ is denoted $A^\top$, its trace (when $m= n$) $\tr (A)$, and vectors in $\R^d$ are identified with $d \times 1$ column vectors.
In addition, the coordinates of a vector $x \in \R^d$ are indicated as superscripts: $x = (x^j)_{1 \leq j \leq d}$.
We also denote $\langle x, z\rangle = x^\top z = \sum_{j=1}^d (x^j) \cdot (z^j)$ the canonical scalar product of $x, z \in \R^d$, and $\| x \| = \langle x, x\rangle^{1/2}$ the associated Euclidean norm.
In addition, for any symmetric and positive $d \times d$ matrix $A$, we define the scalar product $\langle x, z\rangle_A = \langle A x, z\rangle$ and norm $\| x \|_A = \langle A x, x\rangle^{1/2} = \| A^{1/2} x \|$.
The $d \times d$ identity matrix is denoted $I_d$, while $S^{d-1} = \{ x \in \R^d : \| x \| = 1 \}$ refers to the unit sphere.
The smallest and largest eigenvalues of a symmetric matrix $A$ are denoted $\lamin (A)$ and $\lamax (A)$ respectively; if $A$ is positive definite, then $\lamax (A) = \opnorm{A}$
is the operator norm of $A$ (with respect to $\|\! \cdot\! \|$), while $\lamin (A) = \opnorm{A^{-1}}^{-1}$.
We denote by $\dist (x, A) = \inf_{y \in A} \| x - y \|$ the distance of $x \in \R^d$ to a subset $A \subset \R^d$.

\section{Exact minimax analysis of least-squares regression}
\label{sec:minimax-ls}

This section is devoted to the minimax analysis of the linear least-squares problem, and in particular on the dependence of its hardness on the distribution $P_X$ of covariates.
In Section~\ref{sec:minim-risk-analys}, we indicate the exact minimax risk and estimator in the {well-specified} case, namely on the class $\wellclass (P_X, \sigma^2)$.
In Section~\ref{sec:leverage-score-lower}, we express the minimax risk in terms of the distribution of statistical leverage scores, and deduce a general lower bound.
Finally, Section~\ref{sec:upper-bound-minimax} provides {upper bounds} on the minimax risk under some regularity condition on the distribution $P_X$, both in the well-specified and misspecified cases.

Throughout this note, we assume that the covariate vector $X$ satisfies $\E [ \| X \|^2 ] < + \infty$, and denote $\Sigma = \E [X X^\top]$ its covariance matrix (by a slight but common abuse of terminology, we refer to $\Sigma$ as the covariance matrix of $X$ even when $X$ is not centered).
In addition, we assume that $\Sigma$ is invertible, or equivalently that the support of $X$ is not contained in any hyperplane;
this assumption is not restrictive (up to restricting to the span of the support of $X$, a linear subspace of $\R^d$) and only serves to simplify notations.
Then, for every distribution of $Y$ given $X$ such that $\E [Y^2] < + \infty$, the risk $R (\beta) = \E [ (\langle \beta, X\rangle - Y)^2 ]$ of any $\beta \in \R^d$ is finite; this risk is uniquely minimized by $\beta^* = \Sigma^{-1} \E [Y X]$, where $\E [Y X]$ is well-defined since $\E [ \| Y X \| ] \leq \E [Y^2]^{1/2} \E [\| X \|^2]^{1/2} < + \infty$.
The response $Y$ then writes
\begin{equation}
  \label{eq:decomposition-linear}
  Y = \langle \beta^*, X\rangle + \eps
  \, ,
\end{equation}
where $\eps$ is the \emph{error}, with
$\E [ \eps X] = \E [Y X] - \Sigma \beta^* = 0$.
The distribution $P$ of $(X, Y)$ is then characterized by the distribution $P_X$ of $X$, the coefficient $\beta^* \in \R^d$ as well as the conditional distribution of $\eps$ given $X$, which satisfies $\E [\eps^2] \leq \E [Y^2] < + \infty$ and $\E [ \eps X ] = 0$.
Now, given a distribution $P_X$ of covariates and a bound $\sigma^2$ on the conditional second moment of the error, define the following three classes,
where $Y$ is given by~\eqref{eq:decomposition-linear}:
\begin{align}
  \gaussclass (P_X, \sigma^2)
  &= \Big\{ P_{(X, Y)} : X \sim P_X, \, \beta^* \in \R^d, \, \eps \cond X \sim \gaussdist (0, \sigma^2) \Big\} \nonumber \\
  \wellclass (P_X, \sigma^2)
  &= \Big\{ P_{(X, Y)} : X \sim P_X, \, \beta^* \in \R^d, \,  \E [ \eps \cond X ] = 0, \, \E [ \eps^2 \cond X] \leq \sigma^2 \Big\} \nonumber \\
  \misclass (P_X, \sigma^2)
  &= \Big\{ P_{(X, Y)} : X \sim P_X, \, \beta^* \in \R^d, \, \E [ \eps^2 \cond X] \leq \sigma^2 \Big\} \, . 
\end{align}
The class $\gaussclass$ corresponds to the standard case of independent Gaussian noise, while $\wellclass$ includes all \emph{well-specified} distributions, such that the true regression function $x \mapsto \E [Y \cond X = x]$ is linear.
Finally, $\misclass$ corresponds to the general \emph{misspecified} case, where  the regression function $x \mapsto \E [Y \cond X = x]$ is not assumed to be linear.

\subsection{Minimax analysis of linear least squares}
\label{sec:minim-risk-analys}

We start with the following definition.
\begin{definition}
  \label{def:degenerate}
  The distribution $P_X$ on $\R^d$ is \emph{degenerate} if there exists a linear hyperplane $H \subset \R^d$ such that $\P (X \in H) > 0$ (that is, if there exists some $\theta \in S^{d-1}$ such that $\P (\langle \theta, X\rangle = 0) > 0$).
\end{definition}

\begin{fact}
  \label{prop:equiv-degenerate}
  Let $n \geq d$.
  The following properties are equivalent:
  \begin{enumerate}
  \item The distribution $P_X$ is non-degenerate;
  \item The sample covariance matrix $\wh \Sigma_n$ is invertible almost surely;
  \item The ordinary least-squares (OLS) estimator
    \begin{equation}
      \label{eq:def-ols}
      \lsb := \argmin_{\beta \in \R^d} \sum_{i=1}^n (\langle \beta, X_i\rangle - Y_i)^2
    \end{equation}
    is uniquely defined almost surely, and equals $\lsb = \wh \Sigma_n^{-1} n^{-1} \sum_{i=1}^n Y_i X_i$.
  \end{enumerate}
\end{fact}

\begin{proof}
  The equivalence between the second and third points is standard: the empirical risk being convex, its global minimizers are the critical points $\beta$ characterized by $\wh \Sigma_n \beta = n^{-1} \sum_{i=1}^n Y_i X_i$.

  We now prove that the second point implies the first, by contraposition.
  If $\P (\langle \theta, X\rangle = 0) = p > 0$ for some $\theta \in S^{d-1}$, then with probability $p^n$, $\langle \theta, X_i\rangle = 0$ for $i=1, \dots, n$, so that $\wh \Sigma_n \theta = n^{-1} \sum_{i=1}^n \langle \theta, X_i \rangle X_i = 0$ and thus $\wh \Sigma_n$ is not invertible.

  Conversely, let us show that the first point implies the second one.
  Note that the latter amounts to saying that $X_1, \dots, X_n$ span $\R^d$ almost surely.
  It suffices to show this for $n = d$, which we do by showing that, almost surely, $V_k = \vspan (X_1, \dots, X_k)$ is of dimension $k$ for $0 \leq k \leq d$, by induction on $k$.
  The case $k=0$ is clear.
  Now, assume that $k \leq d$ and that $V_{k-1}$ is of dimension $k-1 \leq d-1$ almost surely.
  Then, $V_{k-1}$ is contained in a hyperplane of $\R^d$, and since $X_k$ is independent of $V_{k-1}$, the first point implies that $\P (X_k \in V_{k-1} ) = 0$, so that $V_k$ is of dimension $k$ almost surely.
\end{proof}

\begin{remark}[Intercept]
  Assume that $X = (X^j)_{1\leq j \leq d}$, where $X^d \equiv 1$ is an intercept variable.
  Then, the distribution $P_X$ is degenerate if and only if there exists $\theta = (\theta^j)_{1\leq j < d} \in \R^{d-1} \setminus \{ 0 \}$ and $c \in \R$ such that $\sum_{j=1}^{d-1} \theta^j X^j = c$ with positive probability.
  This amounts to say that $(X^1, \dots, X^{d-1})$ belongs to some fixed affine hyperplane of $\R^{d-1}$ with positive probability.
\end{remark}

The following result shows that non-degeneracy of the design distribution is necessary to obtain finite minimax risk.

\begin{proposition}[Degenerate case]
  \label{prop:degenerate-minimax-risk}
  Assume that either $n < d$, or that the distribution $P_X$ of $X$ is degenerate, in the sense of Definition~\ref{def:degenerate}.
  Then, the minimax excess risk with respect to the class $\gaussclass (P_X, \sigma^2)$ is infinite.
\end{proposition}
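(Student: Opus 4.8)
The plan is to reduce the statement to a Bayesian lower bound and let the prior variance diverge. Fix $\tau > 0$ and equip the class $\gaussclass(P_X, \sigma^2)$ with the prior under which $\beta^* \sim \gaussdist(0, \tau^2 I_d)$ and, independently of $\beta^*$ and of $X$, the noise satisfies $\eps \mid X \sim \gaussdist(0, \sigma^2)$; every distribution in the support of this prior belongs to $\gaussclass(P_X, \sigma^2)$, so the associated Bayes risk is a lower bound on the minimax excess risk. For an arbitrary estimator $\wh\beta_n = \wh\beta_n\big((X_i, Y_i)_{i=1}^n\big)$, conditioning on the full sample and using the elementary bias--variance inequality $\E\big[\|c - Z\|_\Sigma^2\big] \ge \tr\big(\Sigma\, \Cov(Z)\big)$, valid for any random vector $Z$ and any deterministic $c$, the Bayes risk of $\wh\beta_n$ is at least $\E\big[\tr\big(\Sigma \cdot \Cov(\beta^* \mid (X_i, Y_i)_{i=1}^n)\big)\big]$, recalling that $\excessrisk(\wh\beta_n) = \|\wh\beta_n - \beta^*\|_\Sigma^2$.

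Next I would compute the posterior covariance by conjugacy. Conditionally on $X_1, \dots, X_n$, the relation $Y_i = \langle \beta^*, X_i\rangle + \eps_i$ together with a Gaussian prior on $\beta^*$ and Gaussian noise forms a linear--Gaussian model, whose posterior for $\beta^*$ is Gaussian with precision matrix $\tau^{-2} I_d + \sigma^{-2}\sum_{i=1}^n X_i X_i^\top = \tau^{-2} I_d + \sigma^{-2} n \wh\Sigma_n$; hence $\Cov(\beta^* \mid (X_i, Y_i)_{i=1}^n) = (\tau^{-2} I_d + \sigma^{-2} n \wh\Sigma_n)^{-1}$, a function of the covariates alone. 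The key elementary observation is that if $\wh\Sigma_n$ is singular, then $\tau^{-2} I_d + \sigma^{-2} n \wh\Sigma_n$ has $\tau^{-2}$ among its eigenvalues, so its inverse has $\tau^2$ among its eigenvalues; since moreover $\tr(\Sigma A^{-1}) \ge \lamin(\Sigma)\, \tr(A^{-1})$ for positive $A$, and $\lamin(\Sigma) > 0$ because $\Sigma$ is invertible, we get
\[
  \tr\Big( \Sigma \big( \tau^{-2} I_d + \sigma^{-2} n \wh\Sigma_n \big)^{-1} \Big)
  \ge \lamin(\Sigma)\, \tau^2 \cdot \indic{\wh\Sigma_n \text{ is not invertible}} .
\]
Taking expectations, the minimax excess risk over $\gaussclass(P_X, \sigma^2)$ is at least $\lamin(\Sigma)\, \tau^2\, \P(\wh\Sigma_n \text{ not invertible})$.

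It then remains to check that this probability is positive under either hypothesis. If $n < d$, then $\sum_{i=1}^n X_i X_i^\top$ has rank at most $n < d$, so $\wh\Sigma_n$ is singular almost surely. If $n \ge d$ and $P_X$ is degenerate, then by Definition~\ref{def:equiv-degenerate} the event that $\wh\Sigma_n$ is not invertible has positive probability (indeed, if $\P(\langle\theta, X\rangle = 0) = p > 0$ for some $\theta \in S^{d-1}$, it has probability at least $p^n$). In both cases $\P(\wh\Sigma_n \text{ not invertible}) > 0$, so letting $\tau \to \infty$ shows that the minimax excess risk is infinite.

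The argument involves only routine verifications — that a prior supported in the class yields a valid lower bound on the minimax risk, that the conditional bias--variance bound applies to an arbitrary data-dependent estimator, and the standard conjugate-Gaussian posterior computation — so I do not expect a real obstacle. It is worth noting, however, that passing through a rotationally invariant prior on $\beta^*$, rather than a two-point argument, is precisely what makes the nondegenerate case $n < d$ work: there the unidentifiable direction is $\vspan(X_1, \dots, X_n)^\perp$, which depends on the sample and cannot be fixed in advance, whereas averaging over the prior handles all directions at once. The proof also shows that the obstruction is already present for the smallest class $\gaussclass(P_X, \sigma^2)$, and hence a fortiori for $\wellclass(P_X,\sigma^2)$ and $\misclass(P_X,\sigma^2)$.
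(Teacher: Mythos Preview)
Your proof is correct and takes essentially the same approach as the paper: both place a Gaussian prior $\gaussdist(0,\tau^2 I_d)$ on $\beta^*$ (the paper parameterizes via $\lambda=\sigma^2/(\tau^2 n)$), identify the Bayes risk with $\E\big[\tr\big(\Sigma(\tau^{-2}I_d+\sigma^{-2}n\wh\Sigma_n)^{-1}\big)\big]$, observe that on the positive-probability event $\{\wh\Sigma_n\text{ singular}\}$ this trace is at least $\lamin(\Sigma)\,\tau^2$, and let $\tau\to\infty$. The only cosmetic difference is that the paper obtains the Bayes-risk formula by explicitly computing the Ridge estimator's risk via Lemma~\ref{lem:risk-ridge-general}, whereas you reach the same expression more directly through the posterior-variance lower bound; the underlying argument is identical.
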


An infinite minimax excess risk means that some dependence on the true parameter $\beta^*$ (for instance, through its norm) is unavoidable in the expected risk of any estimator $\wh \beta_n$.
From now on and until the rest of this section, we assume that the distribution $P_X$ is non-degenerate and that $n \geq d$.
In particular, the OLS estimator is well-defined, and the empirical covariance matrix $\wh \Sigma_n$ is invertible almost surely.
Theorem~\ref{thm:ls-minimax} below provides the exact minimax excess risk and estimator in the well-specified case.

\begin{theorem}
  \label{thm:ls-minimax}
  Assume that $P_X$ is non-degenerate and $n \geq d$.
  The minimax risks over classes $\wellclass (P_X, \sigma^2)$ and $\gaussclass (P_X, \sigma^2)$ coincide, and equal
  \begin{align}
    \label{eq:minimax-risk-ls}
    \inf_{\wh \beta_n} \sup_{P \in \wellclass (P_X, \sigma^2)} \E
    \big[ \excessrisk_P (\wh \beta_n) \big]
    = \frac{\sigma^2}{n} \cdot \E \big[ \tr (\wt \Sigma_n^{-1}) \big]
  \end{align}
  where $\wt \Sigma_n = \Sigma^{-1/2} \wh \Sigma_n \Sigma^{-1/2}$
  is the rescaled empirical covariance matrix.
  In addition, the minimax risk is achieved by the OLS estimator~\eqref{eq:def-ols} over the classes $\gaussclass (P_X, \sigma^2)$ and $\wellclass (P_X, \sigma^2)$ for every $P_X$ and $\sigma^2$.
\end{theorem}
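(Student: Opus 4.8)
The plan is to pin down both minimax values by sandwiching them between a single quantity,
$M := \tfrac{\sigma^2}{n}\,\E[\tr(\wt\Sigma_n^{-1})] \in (0,+\infty]$: an upper bound coming from the risk of OLS over the \emph{larger} class $\wellclass(P_X,\sigma^2)$, and a matching lower bound, valid for \emph{every} estimator, over the \emph{smaller} class $\gaussclass(P_X,\sigma^2) \subseteq \wellclass(P_X,\sigma^2)$. Since $\gaussclass \subseteq \wellclass$, the minimax value over $\wellclass$ is at least that over $\gaussclass$; so once the lower bound holds over $\gaussclass$ and the upper bound holds over $\wellclass$, both minimax values are squeezed to $M$, and the same computations will show OLS is minimax over each class.

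For the upper bound, I would first note that in the well-specified decomposition $Y_i = \langle \beta^*, X_i\rangle + \eps_i$ the OLS estimator is unbiased: from its closed form, $\lsb - \beta^* = \wh\Sigma_n^{-1}\,\tfrac1n\sum_{i=1}^n \eps_i X_i$, which does not depend on $\beta^*$. Conditioning on $X_1,\dots,X_n$ (abbreviated $X_{1:n}$) and using that the pairs are \iid (so the $\eps_i$ are conditionally independent with $\E[\eps_i \mid X_{1:n}] = 0$ and $\E[\eps_i^2 \mid X_{1:n}] \leq \sigma^2$), the conditional covariance of $\tfrac1n\sum \eps_i X_i$ is $\preceq \tfrac{\sigma^2}{n}\wh\Sigma_n$, whence
\[
  \E\big[ \excessrisk(\lsb) \mid X_{1:n} \big]
  = \tr\!\Big( \wh\Sigma_n^{-1} \Sigma \wh\Sigma_n^{-1}\, \E\big[ (\tfrac1n\textstyle\sum \eps_i X_i)(\tfrac1n\textstyle\sum \eps_i X_i)^\top \mid X_{1:n} \big] \Big)
  \leq \frac{\sigma^2}{n}\, \tr(\wh\Sigma_n^{-1} \Sigma)
  = \frac{\sigma^2}{n}\, \tr(\wt\Sigma_n^{-1}) \, ,
\]
using the cyclic invariance of the trace and $\tr(\Sigma \wh\Sigma_n^{-1}) = \tr(\wt\Sigma_n^{-1})$. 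Taking expectations gives $\sup_{P \in \wellclass} \E[\excessrisk_P(\lsb)] \leq M$; and since Gaussian noise satisfies $\E[\eps_i^2 \mid X_i] = \sigma^2$ exactly, the inequality becomes an equality for every $P \in \gaussclass$, so (OLS's risk being independent of $\beta^*$) $\sup_{P \in \gaussclass} \E[\excessrisk_P(\lsb)] = M$.

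For the lower bound I would run a Bayes argument over $\gaussclass(P_X,\sigma^2)$. Place a centered isotropic Gaussian prior $\beta^* \sim \gaussdist(0, \tau^2 I_d)$; each draw produces a member of $\gaussclass(P_X,\sigma^2)$, so the minimax risk over $\gaussclass$ is at least the Bayes risk for this prior. Conditioning on the design, Gaussian--Gaussian conjugacy makes the posterior of $\beta^*$ Gaussian with covariance $\Gamma_\tau = (\tau^{-2} I_d + (n/\sigma^2)\wh\Sigma_n)^{-1}$, which does not depend on the responses; the Bayes-optimal estimator for the $\|\cdot\|_\Sigma$-loss is then the posterior mean, with Bayes risk equal to $\E[\tr(\Sigma \Gamma_\tau)]$. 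As $\tau \to +\infty$, on the almost-sure event that $\wh\Sigma_n$ is invertible (Definition~\ref{def:equiv-degenerate}) the matrices $\Gamma_\tau$ increase, in the positive-semidefinite order, to $(\sigma^2/n)\wh\Sigma_n^{-1}$, so $\tr(\Sigma\Gamma_\tau) \uparrow (\sigma^2/n)\tr(\wt\Sigma_n^{-1})$; monotone convergence gives $\sup_\tau \E[\tr(\Sigma\Gamma_\tau)] = M$, and hence $\inf_{\wh\beta_n} \sup_{P \in \gaussclass} \E[\excessrisk_P(\wh\beta_n)] \geq M$.

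Combining the two steps yields $M \leq \inf_{\wh\beta_n}\sup_{\gaussclass} \E[\excessrisk_P] \leq \inf_{\wh\beta_n}\sup_{\wellclass} \E[\excessrisk_P] \leq \sup_{\wellclass} \E[\excessrisk_P(\lsb)] \leq M$, so all four quantities coincide; and since $\sup_{\gaussclass}\E[\excessrisk_P(\lsb)] = M$ too, OLS attains the minimax value over both $\wellclass$ and $\gaussclass$. When $M = +\infty$ the identity persists trivially: the upper bound is vacuous and the monotone limit still produces $+\infty$. I expect the only genuinely delicate points to be the conjugacy computation for $\Gamma_\tau$ and the interchange of the diffuse-prior limit with the expectation (via monotone convergence), including checking that the argument degrades gracefully to ``$+\infty = +\infty$'' when $\E[\tr(\wt\Sigma_n^{-1})]$ is infinite.
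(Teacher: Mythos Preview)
Your proposal is correct and follows essentially the same route as the paper: the upper bound via the OLS risk in the well-specified case, and the lower bound via a Gaussian prior on $\beta^*$ over $\gaussclass$, computing the Bayes risk in closed form and letting the prior become diffuse with a monotone convergence argument. The only cosmetic difference is the parameterization of the prior---the paper writes $\Pi_\lambda = \gaussdist(0,\sigma^2/(\lambda n)\,I_d)$ and sends $\lambda \to 0^+$, identifying the Bayes estimator as the Ridge estimator $\wh\beta_{\lambda,n}$ and evaluating its Bayes risk as $(\sigma^2/n)\,\E[\tr\{(\wh\Sigma_n+\lambda I_d)^{-1}\Sigma\}]$, whereas you write $\gaussdist(0,\tau^2 I_d)$ and send $\tau\to\infty$; these are the same computation under $\tau^2 = \sigma^2/(\lambda n)$.
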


The proof of Theorem~\ref{thm:ls-minimax} and Proposition~\ref{prop:degenerate-minimax-risk} is provided in Section~\ref{sec:proof-minimax-risk}, and relies on standard decision-theoretic arguments (see \cite[Chapter~2]{tsybakov2009nonparametric} and \cite[Section~4.10]{johnstone2019gaussian}).
First, an upper bound (in the non-degenerate case) over $\wellclass (P_X, \sigma^2)$ is obtained for the OLS estimator.
Then, a matching lower bound on the minimax risk over the subclass $\gaussclass (P_X, \sigma^2)$ is established by considering the Bayes risk under Gaussian prior on $\beta^*$ and using a monotone convergence argument.

\begin{remark}[Linear changes of covariates]
  \label{rem:lin-transf}
  The minimax risk is invariant under invertible linear transformations of the covariates $x$.
  This can be seen a priori, by noting that the class of linear functions of $x$ is invariant under linear changes of variables.
  To recover it from Theorem~\ref{thm:ls-minimax}, let $X' = A X$, where $A$ is an invertible $d \times d$ matrix.
  Since $\Sigma' = \E [ X' X'^\top ]$ equals $A \Sigma A^\top$ and $\wh \Sigma_n' = n^{-1} \sum_{i=1}^n X_i' X_i'^\top$ equals $A \wh \Sigma_n A^\top$, we have
  \begin{equation*}
    \wh \Sigma_n'^{-1} \Sigma'
    = ((A^\top)^{-1} \wh \Sigma_n^{-1} A^{-1}) (A \Sigma A^\top)
    = (A^\top)^{-1} (\wh \Sigma_n^{-1} \Sigma) A^\top
    \, ,
  \end{equation*}
  which is conjugate to $\wh \Sigma_n^{-1} \Sigma$ and hence has the same trace; this concludes by Theorem~\ref{thm:ls-minimax} (as $\tr (\wt \Sigma_n^{-1}) = \tr (\wh \Sigma_n^{-1} \Sigma)$).
  In particular, the minimax risk for the design $X$ is the same as the one for
$\wt X = \Sigma^{-1/2} X$.
\end{remark}

Note that the OLS estimator $\lsb$ is minimax optimal for every distribution of covariates $P_X$ and noise level $\sigma^2$.
This shows in particular that the knowledge of neither of those properties of the distribution is helpful to achieve improved risk uniformly over the linear class.
On the other hand, when additional knowledge on the optimal parameter $\beta^*$ is available, OLS may no longer be optimal, and knowledge of $\sigma^2$ may be helpful.

Another consequence of Theorem~\ref{thm:ls-minimax} is that independent Gaussian noise is the least favorable noise structure (in terms of minimax risk) in the well-specified case for a given noise level $\sigma^2$.

Finally, the convexity of the map $A \mapsto \tr (A^{-1})$ on positive matrices \cite{bhatia2009pdmatrices} implies (by Jensen's inequality combined with the identity $\E [ \wt \Sigma_n ] = \id$)
that the minimax risk~\eqref{eq:minimax-risk-ls} is always at least as large as $\sigma^2 d / n$, which is the minimax risk in the fixed-design case.
We will however show in what follows that a strictly better lower bound can be obtained for $d \geq 2$.

\subsection{Connection with statistical leverage and
  distribution-independent lower bound}
\label{sec:leverage-score-lower}

In this section, we provide another expression for the minimax risk over the classes $\wellclass (P_X, \sigma^2)$ and $\gaussclass (P_X, \sigma^2)$, by relating it to the notion of \emph{statistical leverage score} \cite{hoaglin1978hat,chatterjee1988sensitivity,huber1981robust}.

\begin{theorem}[Minimax risk and leverage score]
  \label{thm:minimax-leverage-score}
  Under the assumptions of Theorem~\ref{thm:ls-minimax}, the minimax risk~\eqref{eq:minimax-risk-ls} over the classes $\wellclass (P_X, \sigma^2)$ and $\gaussclass (P_X, \sigma^2)$ is equal to
  \begin{equation}
    \label{eq:minimax-leverage-score}
    \inf_{\wh \beta_n} \sup_{P \in \gaussclass (P_X, \sigma^2)} \E
    \big[ \excessrisk_P (\wh \beta_n) \big]
    =
    {\sigma^2} \cdot \E \bigg[ \frac{\wh \ell_{n+1}}{1 - \wh \ell_{n+1}} \bigg]
  \end{equation}
  where the expectation holds over an \iid sample $X_1, \dots, X_{n+1}$ drawn from $P_X$, and where $\wh \ell_{n+1}$ denotes the \emph{statistical leverage score} of $X_{n+1}$ among $X_1, \dots, X_{n+1}$, defined by:
  \begin{equation}
    \label{eq:def-least-squares}
    \wh \ell_{n+1}
    = \bigg\langle \bigg( \sum_{i=1}^{n+1} X_i X_i^\top \bigg)^{-1} X_{n+1}, X_{n+1} \bigg\rangle
    \, .
  \end{equation}
\end{theorem}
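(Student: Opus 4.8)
The plan is to start from the expression for the minimax risk already established in Theorem~\ref{thm:ls-minimax}, namely $\frac{\sigma^2}{n}\,\E[\tr(\wt\Sigma_n^{-1})]$, and to rewrite the trace in terms of the unnormalized Gram matrix $M_n := \sum_{i=1}^n X_i X_i^\top = n\,\wh\Sigma_n$. Since $\wt\Sigma_n^{-1} = \Sigma^{1/2}\wh\Sigma_n^{-1}\Sigma^{1/2}$ has the same trace as $\wh\Sigma_n^{-1}\Sigma = n\,M_n^{-1}\Sigma$, the minimax risk equals $\sigma^2\,\E[\tr(M_n^{-1}\Sigma)]$. I would then introduce an extra covariate $X_{n+1}\sim P_X$ independent of $X_1,\dots,X_n$; conditioning on the first $n$ points and using $\E[X_{n+1}X_{n+1}^\top] = \Sigma$ gives $\tr(M_n^{-1}\Sigma) = \E\big[X_{n+1}^\top M_n^{-1} X_{n+1}\,\big|\,X_1,\dots,X_n\big]$, so that by the tower property the minimax risk equals $\sigma^2\,\E[X_{n+1}^\top M_n^{-1} X_{n+1}]$, the expectation being over the full sample $X_1,\dots,X_{n+1}$.

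It then remains to identify the scalar $u := X_{n+1}^\top M_n^{-1} X_{n+1}$ with $\wh\ell_{n+1}/(1-\wh\ell_{n+1})$. This is where the Sherman--Morrison rank-one update formula enters: writing $M_{n+1} = \sum_{i=1}^{n+1}X_iX_i^\top = M_n + X_{n+1}X_{n+1}^\top$, one has $M_{n+1}^{-1} = M_n^{-1} - \frac{M_n^{-1}X_{n+1}X_{n+1}^\top M_n^{-1}}{1+u}$, so that $\wh\ell_{n+1} = X_{n+1}^\top M_{n+1}^{-1}X_{n+1} = u - \frac{u^2}{1+u} = \frac{u}{1+u}$. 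Inverting this relation yields $u = \wh\ell_{n+1}/(1-\wh\ell_{n+1})$, and plugging it back gives exactly the claimed identity~\eqref{eq:minimax-leverage-score}.

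This argument has essentially no hard step; the only points requiring care are measure-theoretic bookkeeping. Since $P_X$ is non-degenerate and $n\geq d$, the matrices $M_n$ and $M_{n+1}$ are invertible almost surely (Definition~\ref{def:equiv-degenerate}), so $M_n^{-1}$ and $\wh\ell_{n+1}$ are well defined; moreover $u\geq 0$, hence $1+u>0$, so there is no division issue in the Sherman--Morrison identity, and the relation $\wh\ell_{n+1} = u/(1+u) < 1$ shows $1-\wh\ell_{n+1}>0$ almost surely, so the right-hand side of~\eqref{eq:minimax-leverage-score} is a well-defined (possibly infinite) expectation of a nonnegative random variable. It is also worth noting in passing that $\wh\ell_{n+1} = u/(1+u)$ is the classical identity linking the leverage score of a point to its leave-one-out predictive variance, which gives the result its interpretation in terms of one additional ``test'' point $X_{n+1}$.
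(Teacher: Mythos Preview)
Your proposal is correct and follows essentially the same approach as the paper: start from Theorem~\ref{thm:ls-minimax}, introduce an independent $X_{n+1}$ to replace $\Sigma$ inside the trace, and then apply the Sherman--Morrison formula (the paper packages this last step as a separate lemma, your Lemma~\ref{lem:sherman-morrison} in effect) to convert $X_{n+1}^\top M_n^{-1}X_{n+1}$ into $\wh\ell_{n+1}/(1-\wh\ell_{n+1})$. Your additional remarks on almost-sure invertibility and well-definedness are sound and in fact slightly more explicit than the paper's.
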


The leverage score $\wh \ell_{n+1}$ of $X_{n+1}$ among $X_1, \dots, X_{n+1}$ measures the influence of the response $Y_{n+1}$ on the associated fitted value  $\wh Y_{n+1} = \langle \wh \beta_{n+1}^{\mathrm{LS}}, X_{n+1}\rangle$: $\wh Y_{n+1}$ is an affine function of $Y_{n+1}$, with slope $\wh \ell_{n+1} = \partial \wh Y_{n+1} / \partial Y_{n+1}$ \cite{hoaglin1978hat,chatterjee1988sensitivity}.
Theorem~\ref{thm:minimax-leverage-score} shows that the minimax predictive
risk under the distribution $P_X$ is characterized by the distribution of leverage scores of samples drawn from this distribution.
Intuitively, uneven leverage scores (with some points having higher leverage) imply that the estimator $\lsb$ is determined by a smaller number of points, and therefore has higher variance.
This is consistent with the message from robust statistics that points with high leverage (typically seen as outliers) can be detrimental to the performance of the least squares estimator \cite{hoaglin1978hat,chatterjee1988sensitivity,huber1981robust}, see also \cite{raskutti2016statistical}.

\begin{proof}[Proof of Theorem~\ref{thm:minimax-leverage-score}]
  By Theorem~\ref{thm:ls-minimax}, the minimax risk over $\gaussclass (P_X, \sigma^2)$ and $\wellclass (P_X, \sigma^2)$ equals, letting $X_{n+1} \sim P_X$ be independent from $X_1, \dots, X_n$:
  \begin{align}    
    \frac{\sigma^2}{n} \cdot \E \big[ \tr (\wt \Sigma_n^{-1}) \big]
    &= \frac{\sigma^2}{n} \cdot \E \big[ \tr (\wh \Sigma_n^{-1} \Sigma) \big] \nonumber \\
    &= \sigma^2 \cdot \E \big[ \tr \big( (n \wh \Sigma_n)^{-1} X_{n+1} X_{n+1}^\top \big) \big] \nonumber \\
    &= \sigma^2 \cdot \E \big[ \langle (n \wh \Sigma_n)^{-1} X_{n+1}, X_{n+1} \rangle \big] \nonumber \\
    &= \sigma^2 \cdot \E \bigg[ \frac{\langle (n \wh \Sigma_n + X_{n+1} X_{n+1}^\top)^{-1} X_{n+1}, X_{n+1} \rangle}{1 - \langle (n \wh \Sigma_n + X_{n+1} X_{n+1}^\top)^{-1} X_{n+1}, X_{n+1} \rangle} \bigg]
      \label{eq:proof-leverage-ls-sm} \\
    &= \sigma^2 \cdot \E \bigg[ \frac{\wh \ell_{n+1}}{1 - \wh \ell_{n+1}} \bigg]
      \nonumber
      \, ,
  \end{align}
  where~\eqref{eq:proof-leverage-ls-sm} follows from
  Lemma~\ref{lem:sherman-morrison} below, with $S = n \wh \Sigma_n$ and $v = X_{n+1}$.
\end{proof}

\begin{lemma}
  \label{lem:sherman-morrison}
  For any symmetric positive $d \times d$ matrix $S$ and $v \in \R^d$,
  \begin{equation}
    \label{eq:sherman-morrison}
    \langle S^{-1} v, v \rangle
    = \frac{\langle (S + vv^\top)^{-1} v, v\rangle}{1 - \langle (S + vv^\top)^{-1} v, v\rangle}
    \, .
  \end{equation}
\end{lemma}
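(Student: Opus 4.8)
The plan is to prove the Sherman--Morrison-type identity~\eqref{eq:sherman-morrison} directly from the Sherman--Morrison formula for rank-one updates, and then solve for $\langle S^{-1} v, v\rangle$. Concretely, since $S$ is positive definite and $vv^\top$ is positive semidefinite, $S + vv^\top$ is invertible, and the Sherman--Morrison formula gives
\[
  (S + vv^\top)^{-1} = S^{-1} - \frac{S^{-1} v v^\top S^{-1}}{1 + \langle S^{-1} v, v\rangle} \, .
\]
First I would pair this identity with $v$ on both sides: contracting against $v$ yields
\[
  \langle (S + vv^\top)^{-1} v, v\rangle
  = \langle S^{-1} v, v\rangle - \frac{\langle S^{-1} v, v\rangle^2}{1 + \langle S^{-1} v, v\rangle}
  = \frac{\langle S^{-1} v, v\rangle}{1 + \langle S^{-1} v, v\rangle} \, .
\]

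Next I would abbreviate $a := \langle S^{-1} v, v\rangle \geq 0$, so that the previous display reads $\langle (S+vv^\top)^{-1} v, v\rangle = a/(1+a)$. It remains only to invert the map $a \mapsto a/(1+a)$: writing $b := a/(1+a) \in [0,1)$, one solves $b(1+a) = a$, i.e. $b = a(1-b)$, i.e. $a = b/(1-b)$. Substituting back $b = \langle (S+vv^\top)^{-1} v, v\rangle$ gives exactly~\eqref{eq:sherman-morrison}. The only point requiring a word of care is that $1 - b > 0$, i.e. that $\langle (S+vv^\top)^{-1} v, v\rangle < 1$; but this is immediate from $b = a/(1+a)$ with $a \geq 0$, so no division by zero occurs.

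There is essentially no obstacle here: the statement is a routine algebraic manipulation of the rank-one update formula, and the only thing to keep track of is that all denominators are strictly positive (guaranteed by positivity of $S$). An alternative, equally short route, avoiding even Sherman--Morrison, is to note that $(S + vv^\top)^{-1} v$ and $S^{-1} v$ are parallel: both are scalar multiples of the solution $w$ of $S w = v$ once one observes $(S + vv^\top) \big( S^{-1} v / (1 + a) \big) = v$, which pins down the proportionality constant and again yields $\langle (S+vv^\top)^{-1} v, v \rangle = a/(1+a)$. Either way the computation is a couple of lines, so I would simply present the Sherman--Morrison derivation and the inversion of $a \mapsto a/(1+a)$.
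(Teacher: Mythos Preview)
Your proof is correct and follows essentially the same approach as the paper: apply the Sherman--Morrison formula, contract with $v$ to obtain $\langle (S+vv^\top)^{-1} v, v\rangle = \langle S^{-1} v, v\rangle / (1 + \langle S^{-1} v, v\rangle)$, observe that this lies in $[0,1)$, and invert the map $a \mapsto a/(1+a)$.
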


\begin{proof}
  Since $S + v v^\top \mgeq S$ is positive, it is invertible, and the Sherman-Morrison formula \cite{horn1990matrix} shows that
  \begin{align*}
    (S + v v^\top)^{-1}
    &= S^{-1} - \frac{S^{-1} v v^\top S^{-1}}{1 + v^\top S^{-1} v}
      \, , \quad \text{so that}
    \\
    \langle (S + v v^\top)^{-1} v, v\rangle
    &= \langle S^{-1} v, v \rangle - \frac{\langle S^{-1} v, v \rangle^2}{1 + \langle S^{-1} v, v \rangle} 
    = \frac{\langle S^{-1} v, v \rangle}{1 + \langle S^{-1} v, v \rangle}
    ,
  \end{align*}
  hence $\langle (S + vv^\top)^{-1} v, v\rangle \in [0, 1)$.
  Inverting this equality yields~\eqref{eq:sherman-morrison}.
\end{proof}

We now deduce from
Theorem~\ref{thm:minimax-leverage-score} a precise lower bound on the minimax risk~\eqref{eq:minimax-risk-ls}, valid for every distribution of covariates $P_X$.
By Proposition~\ref{prop:degenerate-minimax-risk}, it suffices to consider the case when $n \geq d$ and $P_X$ is nondegenerate
(since otherwise the minimax risk is infinite).

\begin{corollary}[Minimax lower bound]
  \label{cor:minimax-lower-bound}
  Under the assumptions of Theorem~\ref{thm:ls-minimax}, the minimax risk~\eqref{eq:minimax-risk-ls} over $\gaussclass (P_X, \sigma^2)$
  satisfies
  \begin{equation}
    \label{eq:minimax-lower-bound}
    \inf_{\wh \beta_n} \sup_{P \in \gaussclass (P_X, \sigma^2)} \E
    \big[ \excessrisk_P (\wh \beta_n) \big]
    \geq 
    \frac{\sigma^2 d}{n - d + 1}
    \, .
  \end{equation}
\end{corollary}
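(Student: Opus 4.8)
The plan is to deduce the bound directly from the leverage-score formula for the minimax risk established in Theorem~\ref{thm:minimax-leverage-score}, which expresses the quantity of interest as $\sigma^2 \, \E[\wh\ell_{n+1}/(1-\wh\ell_{n+1})]$ over an i.i.d.\ sample $X_1, \dots, X_{n+1}$, and then to lower-bound this expectation by Jensen's inequality after computing the mean leverage score.

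First I would record two elementary properties of the leverage scores $\wh\ell_1, \dots, \wh\ell_{n+1}$. On the one hand, taking traces in~\eqref{eq:def-least-squares} and summing over $i$ gives $\sum_{i=1}^{n+1} \wh\ell_i = \tr\big((\sum_{j=1}^{n+1} X_j X_j^\top)^{-1} \sum_{i=1}^{n+1} X_i X_i^\top\big) = \tr(I_d) = d$; here the matrix $\sum_{j} X_j X_j^\top$ is invertible almost surely since $P_X$ is non-degenerate and $n+1 > n \geq d$. On the other hand, writing $\sum_{j} X_j X_j^\top = S_{-i} + X_i X_i^\top$ with $S_{-i} = \sum_{j \neq i} X_j X_j^\top$ (invertible a.s.\ because $n \geq d$), the computation in the proof of Lemma~\ref{lem:sherman-morrison} applied with $S = S_{-i}$ and $v = X_i$ shows that $\wh\ell_i = \langle S_{-i}^{-1} X_i, X_i\rangle / (1 + \langle S_{-i}^{-1} X_i, X_i\rangle) \in [0,1)$ almost surely. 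Since the $X_i$ are i.i.d., the law of $\wh\ell_i$ does not depend on $i$, hence $\E[\wh\ell_{n+1}] = \frac{1}{n+1}\,\E\big[\sum_{i=1}^{n+1}\wh\ell_i\big] = \frac{d}{n+1}$.

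Then I would use that $u \mapsto u/(1-u) = -1 + (1-u)^{-1}$ is convex on $[0,1)$ (its second derivative $2(1-u)^{-3}$ is positive), so that, since $\wh\ell_{n+1} \in [0,1)$ a.s.\ and $\E[\wh\ell_{n+1}] = d/(n+1) < 1$, Jensen's inequality yields
\[
  \E\Big[\frac{\wh\ell_{n+1}}{1-\wh\ell_{n+1}}\Big]
  \;\geq\; \frac{\E[\wh\ell_{n+1}]}{1-\E[\wh\ell_{n+1}]}
  \;=\; \frac{d/(n+1)}{1-d/(n+1)}
  \;=\; \frac{d}{n-d+1} \, .
\]
Multiplying by $\sigma^2$ and invoking Theorem~\ref{thm:minimax-leverage-score} gives the claimed lower bound $\sigma^2 d/(n-d+1)$.

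I do not expect a genuine obstacle here: once Theorem~\ref{thm:minimax-leverage-score} is available, this is a short computation. The only points that require a little care are (i) checking $\wh\ell_{n+1} \in [0,1)$ almost surely so that the expression is well-defined and the convex function is evaluated on the correct interval (handled via non-degeneracy and $n \geq d$, as above), and (ii) noting that the inequality remains valid even when the left-hand expectation is $+\infty$. It is also worth remarking, as a sanity check, that this bound essentially matches the exact Gaussian value $\sigma^2 d/(n-d-1)$ (from Section~\ref{sec:proof-minimax-risk}), which confirms that it cannot be substantially improved for general $P_X$.
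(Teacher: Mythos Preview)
Your proof is correct and follows essentially the same argument as the paper: invoke Theorem~\ref{thm:minimax-leverage-score}, apply Jensen's inequality to the convex map $u \mapsto u/(1-u)$ on $[0,1)$, and compute $\E[\wh\ell_{n+1}] = d/(n+1)$ by exchangeability and the trace identity $\sum_i \wh\ell_i = d$. Your version is slightly more explicit (you verify $\wh\ell_{n+1} \in [0,1)$ via Sherman--Morrison and note the $+\infty$ case), but the proof is otherwise identical to the paper's.
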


\begin{proof}[Proof of Corollary~\ref{cor:minimax-lower-bound}]
  By Theorem~\ref{thm:minimax-leverage-score}, the minimax excess risk over $\gaussclass (P_X, \sigma^2)$ writes:
  \begin{equation}
    \label{eq:proof-minimax-lower-1}
    \sigma^2 \cdot \E \bigg[ \frac{\wh \ell_{n+1}}{1 - \wh \ell_{n+1}} \bigg]
    \geq \sigma^2 \cdot \frac{\E [ \wh \ell_{n+1} ]}{1 - \E [ \wh \ell_{n+1} ]}
    \, ,
  \end{equation}
  where the inequality follows from the convexity of the map $x \mapsto x / (1-x) = 1 - 1 / (1-x)$ on $[0, 1)$.
  Now, by exchangeability of $(X_1, \dots, X_{n+1})$,
  \begin{align}
    \E [ \wh \ell_{n+1} ]
    &= \frac{1}{n+1} \sum_{i=1}^{n+1} \E \bigg[ \bigg\langle \bigg( \sum_{i=1}^{n+1} X_i X_i^\top \bigg)^{-1} X_{i}, X_{i} \bigg\rangle \bigg] \nonumber \\
    &= \frac{1}{n+1} \E \bigg[ \tr \bigg\{ \bigg( \sum_{i=1}^{n+1} X_i X_i^\top \bigg)^{-1} \bigg( \sum_{i=1}^{n+1} X_i X_i^\top \bigg) \bigg\} \bigg]
      = \frac{d}{n+1}
      \label{eq:proof-minimax-lower-2}
      \, .
  \end{align}
  Plugging equation~\eqref{eq:proof-minimax-lower-2} into~\eqref{eq:proof-minimax-lower-1} yields the lower bound~\eqref{eq:minimax-lower-bound}.
\end{proof}

Since $n - d + 1 \leq n$, Corollary~\ref{cor:minimax-lower-bound} implies a lower bound of $\sigma^2 d / n$.
The minimax risk for linear regression has been determined under additional boundedness assumptions on $Y$ (and thus on $\beta^*$) by \cite{tsybakov2003optimal}, showing that it scales as $\Theta (d/n)$ up to numerical constants.
The proof of the lower bound relies on information-theoretic arguments, and in particular on Fano's inequality \cite{tsybakov2009nonparametric}.
Although widely applicable, such techniques often lead to loose constant factors.
By contrast, the approach relying on Bayes risk
leading to Corollary~\ref{cor:minimax-lower-bound} recovers the optimal leading constant, owing to the analytical tractability of the problem.

In fact, the lower bound of Corollary~\ref{cor:minimax-lower-bound} is more precise than the $\sigma^2 d/n$ lower bound, in particular when the dimension $d$ is commensurate to $n$.
Indeed, in the case of centered Gaussian design, namely when $X \sim \gaussdist (0, \Sigma)$ for some positive matrix $\Sigma$, the risk of the OLS estimator (and thus, by Theorem~\ref{thm:ls-minimax}, the minimax risk)
can be computed exactly \cite{anderson2003introduction,breiman1983many}, and equals
  \begin{equation}
    \label{eq:risk-gaussian-design}
    \E \big[ \excessrisk_P (\lsb) \big]
    = \frac{\sigma^2 d}{n - d - 1}
    \, .
  \end{equation}
  The distribution-independent lower bound of Corollary~\ref{cor:minimax-lower-bound} is very close to the above whenever $n - d \gg 1$.
  Hence, it is almost the best possible distribution-independent lower bound on the minimax risk.
  This also shows that Gaussian design is almost the easiest design distribution in terms of minimax risk.
  This can be understood as follows: degeneracy (a large value of $\tr (\wt \Sigma_n^{-1})$) occurs whenever the rescaled sample covariance matrix $\wt \Sigma_n$ is small in some direction;
  this occurs if either the direction of $\wt X = \Sigma^{-1/2} X$ is far from uniform (so that the projection of $\wt X$ in some direction can be small), or if its norm can be small.
  If $\wt X \sim \gaussdist (0, \id)$, then $\wt X / \| \wt X \|$ is uniformly distributed on the unit sphere, while $\| \wt X \| = \sqrt{\sum_{j=1}^d (\wt X^j)^2}$ is sharply concentrated around $\sqrt{d}$: %
  with high probability, $\| \wt X \| = \sqrt{d} + O (1)$ (see \eg \cite[Eq.~3.7]{vershynin2018high}).

  In particular, in the high-dimensional regime where $d$ and $n$ are large and commensurate, namely $d, n \to \infty$ and $d/n \to \gamma$, the lower bound of Corollary~\ref{cor:minimax-lower-bound} matches the minimax risk~\eqref{eq:risk-gaussian-design} in the Gaussian case, which converges to $\sigma^2 \gamma / (1-\gamma)$.
  The limit $\sigma^2 \gamma / (1 - \gamma)$ has a form of universality in the high-dimensional regime: indeed, it is connected to the Marchenko-Pastur law for the spectrum of random matrices~\cite{marchenko1967distribution}, which extends to more general distributions with jointly independent coordinates.
  However, the ``universality'' of this limiting behavior is quite restrictive \cite{elkaroui2011geometric,elkaroui2018predictor}, since it relies on the assumption of independent covariates, which induces in high dimension a very specific geometry due to the concentration of measure phenomenon \cite{ledoux2001concentration,boucheron2013concentration}.
  For instance, \cite{elkaroui2018predictor} obtains different limiting risks for robust regression in high dimension when considering non-independent coordinates.
  Corollary~\ref{cor:minimax-lower-bound} shows that, if not universal, the limiting risk obtained in the independent case provides a \emph{lower bound} for general design distributions.  

  Finally, the property of the design distribution that leads to the minimal excess risk in high dimension can be formulated succinctly in terms of leverage scores, using Theorem~\ref{thm:minimax-leverage-score}.

  \begin{corollary}
    \label{cor:leverage-score-constant}
    Let $(d_n)_{n \geq 1}$ be a sequence of positive integers such that $d_n / n \to \gamma \in (0, 1)$, and $(P_X^{(n)})_{n \geq 1}$ a sequence of non-degenerate distributions on $\R^{d_n}$.
    Assume that the minimax excess risk~\eqref{eq:minimax-risk-ls} over
    $\wellclass (P_X^{(n)}, \sigma^2)$ converges to $\sigma^2 \gamma / (1 - \gamma)$.
    Then, the distribution of the leverage score $\wh \ell_{n+1}^{(n)}$ of one sample among $n+1$ under $P_X^{(n)}$ converges in probability to $\gamma$.
  \end{corollary}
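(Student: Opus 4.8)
The plan is to combine the leverage-score formula for the minimax risk (Theorem~\ref{thm:minimax-leverage-score}) with the identity $\E[\wh\ell_{n+1}^{(n)}] = d_n/(n+1)$ already established inside the proof of Corollary~\ref{cor:minimax-lower-bound}, and then to convert the resulting ``asymptotic equality in Jensen's inequality'' into a concentration statement by a Bregman-divergence argument. Throughout, write $Z_n := \wh\ell_{n+1}^{(n)}$ and $\phi(x) := x/(1-x)$, which is strictly convex and increasing on $[0,1)$ with $\phi(x) \to +\infty$ as $x \to 1^-$; note that $Z_n \in [0,1)$ almost surely by Lemma~\ref{lem:sherman-morrison}, so $\phi(Z_n)$ is well-defined.

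The two inputs are as follows. First, by Theorem~\ref{thm:ls-minimax} (which identifies the minimax risks over $\wellclass(P_X^{(n)},\sigma^2)$ and $\gaussclass(P_X^{(n)},\sigma^2)$) and Theorem~\ref{thm:minimax-leverage-score}, the minimax excess risk over $\wellclass(P_X^{(n)},\sigma^2)$ equals $\sigma^2\,\E[\phi(Z_n)]$, so the hypothesis becomes $\E[\phi(Z_n)] \to \gamma/(1-\gamma) = \phi(\gamma)$. Second, by exchangeability of $(X_1,\dots,X_{n+1})$ together with the trace computation of equation~\eqref{eq:proof-minimax-lower-2}, one has $\E[Z_n] = d_n/(n+1) \to \gamma$. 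Now set $g(z) := \phi(z) - \phi(\gamma) - \phi'(\gamma)(z-\gamma)$ on $[0,1)$, with $\phi'(\gamma) = (1-\gamma)^{-2}$; convexity of $\phi$ gives $g \geq 0$, and taking expectations yields $0 \leq \E[g(Z_n)] = \E[\phi(Z_n)] - \phi(\gamma) - \phi'(\gamma)\bigl(\E[Z_n]-\gamma\bigr) \to 0$.

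It remains to deduce convergence in probability. The function $g$ is convex, vanishes only at $z=\gamma$ by strict convexity of $\phi$, is nonincreasing on $[0,\gamma]$ and nondecreasing on $[\gamma,1)$, and tends to $+\infty$ as $z\to 1^-$; hence for every $\eps>0$ the infimum $\delta_\eps := \inf\{g(z) : z\in[0,1),\, |z-\gamma|\geq\eps\}$ is strictly positive (it is attained at $\gamma\pm\eps$ when these points lie in $[0,1)$, and the relevant set is handled by monotonicity of $g$ and its blow-up at $1$ otherwise). Markov's inequality then gives $\P\bigl(|Z_n-\gamma|\geq\eps\bigr) \leq \P\bigl(g(Z_n)\geq\delta_\eps\bigr) \leq \E[g(Z_n)]/\delta_\eps \to 0$, which is precisely the claimed convergence of $\wh\ell_{n+1}^{(n)}$ to $\gamma$ in probability. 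The argument is largely routine; the only step needing a little care is the positivity of $\delta_\eps$, which is exactly where the explosion of $\phi$ near $z=1$ (and hence the a.s.\ bound $Z_n<1$ from Lemma~\ref{lem:sherman-morrison}) is used — beyond that, no new idea past ``asymptotic equality in Jensen forces concentration'' is required.
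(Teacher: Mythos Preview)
Your proof is correct and essentially identical to the paper's own argument: both define the Bregman remainder $g(z)=\phi(z)-\phi(\gamma)-\phi'(\gamma)(z-\gamma)$ of the strictly convex $\phi(x)=x/(1-x)$, show $\E[g(Z_n)]\to 0$ from $\E[\phi(Z_n)]\to\phi(\gamma)$ and $\E[Z_n]\to\gamma$, and conclude via Markov's inequality using $\inf_{|z-\gamma|\geq\eps} g(z)>0$. Your write-up is slightly more explicit (invoking Lemma~\ref{lem:sherman-morrison} for $Z_n<1$ a.s.\ and noting the blow-up of $g$ at $1^-$), but the route is the same.
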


  \begin{proof}
    Let $\phi (x) = x / (1 - x)$ for $x \in [0, 1)$, and $\psi (x) := \phi (x) - \phi (\gamma) - \phi' (\gamma) (x - \gamma)$ (with $\psi (\gamma) = 0$).
    Since $\phi$ is strictly convex, $\psi (x) > 0$ for $x \neq \gamma$, and $\psi$ is also strictly convex.
    Hence, $\psi$ is decreasing on $[0, \gamma]$ and increasing on $[\gamma, 1)$.
    In particular, for every $\eps > 0$, $\eta_\eps := \inf_{|x - \gamma | \geq \eps} \psi (x) > 0$.
    
    By Theorem~\ref{thm:minimax-leverage-score}, the assumption of Corollary~\ref{cor:leverage-score-constant} means that $\E [ \phi (\wh \ell_{n+1}^{(n)}) ] \to \phi (\gamma)$.
    Since in addition $\E [ \wh \ell_{n+1}^{(n)} ] = d_n / (n+1) \to \gamma$ (the first equality, used in the proof of Corollary~\ref{cor:minimax-lower-bound}, holds for $d_n \leq n + 1$, hence for $n$ large enough since $\gamma < 1$), we have $\E [ \psi (\wh \ell_{n+1}^{(n)}) ] \to 0$.
    Now, for every $\eps > 0$, $\psi (x) \geq \eta_\eps \cdot \indic{| x - \gamma | \geq \eps}$, so that $\P ( | \wh \ell_{n+1}^{(n)} - \gamma | \geq \eps ) \leq \eta_\eps^{-1} \E [ \psi (\wh \ell_{n+1}^{(n)}) ] \to 0$.
  \end{proof}

  \subsection{Upper bounds on the minimax risk%
  }
\label{sec:upper-bound-minimax}

In this section, we complement the lower bound of Corollary~\ref{cor:minimax-lower-bound} by providing matching \emph{upper bounds} on the minimax risk.
Since by Proposition~\ref{prop:degenerate-minimax-risk} the minimax risk is infinite when the design distribution is degenerate, we introduce the following quantitative version of the non-degeneracy condition:

\begin{assumption}[Small-ball condition]
  \label{ass:small-ball}
  The whitened design $\wt X = \Sigma^{-1/2} X$ satisfies the following: there exist constants $C \geq 1$ and $\alpha \in (0, 1]$ such that, for every
  linear hyperplane $H$ of $\R^d$ and $t > 0$,
  \begin{equation}
    \label{eq:small-ball}
    \P \big( \dist (\wt X, H) \leq t \big)
    \leq (C t)^\alpha
    \, .
  \end{equation}
  Equivalently, for every $\theta \in \R^d \setminus \{ 0 \}$ and $t > 0$,
  \begin{equation}
    \label{eq:small-ball-gen}
    \P \big( | \langle \theta, X\rangle | \leq t \| \theta \|_{\Sigma} \big)
    \leq (C t)^\alpha
    \, .
  \end{equation}
\end{assumption}

The equivalence between~\eqref{eq:small-ball} and~\eqref{eq:small-ball-gen} comes from the fact that the distance $\dist (\wt X, H)$ of $\wt X$ to the hyperplane $H$ equals $| \langle \theta', \wt X\rangle |$, where $\theta' \in S^{d-1}$ is a normal vector to $H$.
Condition~\eqref{eq:small-ball-gen} is then recovered by letting $\theta = \Sigma^{-1/2} \theta'$ (such that $\| \theta \|_\Sigma = \| \theta' \| = 1$)
and by homogeneity.

Assumption~\ref{ass:small-ball} states that $\wt X$ does not lie too close to any fixed hyperplane.
This assumption is a strengthened variant of the ``small ball'' condition introduced by \cite{koltchinskii2015smallest,mendelson2015smallball,lecue2016performance} in the analysis of sample covariance matrices and least squares regression, which amounts to assuming~\eqref{eq:small-ball-gen} for a {single} value of $t < C^{-1}$.
This latter condition amounts to a uniform equivalence between the $L^1$ and $L^2$ norms of one-dimensional marginals $\langle \theta, X\rangle$ ($\theta \in \R^d$) of $X$ \cite{koltchinskii2015smallest}.
Here, we require that the condition holds for arbitrarily small $t$; the reason for this is that in order to control the minimax excess risk~\eqref{eq:minimax-risk-ls} (and thus $\E [ \tr (\wt \Sigma_n^{-1}) ]$), we are led to control the lower tail of the rescaled covariance matrix $\wt \Sigma_n$ at all confidence levels.
The study of the lower tail of $\wt \Sigma_n$ (on which the results of this section rely) is deferred to Section~\ref{sec:small-ball-bounds}.
We also illustrate Assumption~\ref{ass:small-ball} in Section~\ref{sec:small-ball-indep}, by discussing conditions under which it holds in the case of independent coordinates.

First, Assumption~\ref{ass:small-ball} itself suffices to obtain an upper bound on the minimax risk of $O (\sigma^2 d / n)$, without %
additional assumptions on the upper tail of $X X^\top$ (apart from integrability).

\begin{proposition}
  \label{prop:upper-well-constant}
  If Assumption~\ref{ass:small-ball} holds, then for every $P \in \wellclass (P_X, \sigma^2)$, letting $C' = 3 C^4 e^{1+9/\alpha}$ we have:
  \begin{equation}
    \label{eq:upper-well-constant}
    \E [ \excessrisk (\lsb) ]
    \leq 2 C' \cdot \frac{\sigma^2 d}{n}
    \, .
  \end{equation}
\end{proposition}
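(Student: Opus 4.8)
The plan is to reduce, via the exact risk formula of Theorem~\ref{thm:ls-minimax}, to a scalar bound on a negative moment of the rescaled covariance matrix, and then feed in the lower-tail estimate established in Section~\ref{sec:small-ball-bounds}. Since Assumption~\ref{ass:small-ball} forces $P_X$ to be non-degenerate (take $t \to 0$ in~\eqref{eq:small-ball}), the OLS estimator is well-defined, and the last part of Theorem~\ref{thm:ls-minimax} gives, for every $P \in \wellclass(P_X, \sigma^2)$, the bound $\E[\excessrisk(\lsb)] \le \frac{\sigma^2}{n}\,\E[\tr(\wt\Sigma_n^{-1})]$. Hence it suffices to prove $\E[\tr(\wt\Sigma_n^{-1})] \le 2C' d$. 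Because $\wt\Sigma_n$ is positive definite almost surely and has $d$ eigenvalues, $\tr(\wt\Sigma_n^{-1}) \le d\,\lamin(\wt\Sigma_n)^{-1} = d\,\opnorm{\wt\Sigma_n^{-1}}$, so the whole task reduces to the scalar inequality $\E[\opnorm{\wt\Sigma_n^{-1}}] \le 2C'$.

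Next I would observe that $\wt\Sigma_n = \Sigma^{-1/2}\wh\Sigma_n\Sigma^{-1/2}$ is precisely the sample covariance matrix of the whitened sample $\wt X_i := \Sigma^{-1/2}X_i$, which has identity covariance and, by the very formulation of Assumption~\ref{ass:small-ball}, satisfies the same small-ball condition with the same $C, \alpha$ (equivalently, this is the invariance noted in Remark~\ref{rem:lin-transf}). Therefore the lower-tail results of Section~\ref{sec:small-ball-bounds}, which are stated under the normalization $\E[XX^\top] = I_d$, apply directly to $\wt\Sigma_n$: Theorem~\ref{thm:lower-tail-covariance} --- equivalently Corollary~\ref{cor:negative-moments-smallest} --- provides, under Assumption~\ref{ass:small-ball}, a bound of the form $\P(\lamin(\wt\Sigma_n) \le t) \le e^{-n\psi(t)}$ with $\psi(t) \to +\infty$ as $t \to 0$, the rate $\psi$ being controlled by $C$ and $\alpha$. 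From here I would use the layer-cake identity $\E[\opnorm{\wt\Sigma_n^{-1}}] = \int_0^\infty \P(\opnorm{\wt\Sigma_n^{-1}} > s)\,ds = \int_0^\infty \P(\lamin(\wt\Sigma_n) < 1/s)\,ds$, split the integral at a threshold $s_0$ above which the exponential tail is effective, bound the contribution of $s \le s_0$ by $s_0$, and integrate the exponential tail over $s > s_0$; choosing $s_0$ and carefully propagating the constants through $\psi$ yields $\E[\opnorm{\wt\Sigma_n^{-1}}] \le 2C'$ with $C' = 3C^4 e^{1+9/\alpha}$.

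I expect the genuinely hard part to be entirely deferred: establishing the lower-tail bound $\P(\lamin(\wt\Sigma_n) \le t) \le e^{-n\psi(t)}$ that remains informative --- indeed improves --- as $t \to 0$ is the PAC-Bayesian argument with non-Gaussian smoothing that constitutes the technical core of Section~\ref{sec:small-ball-bounds}; everything in the present proof is downstream of it. Taking that input for granted, the only delicate point here is the bookkeeping that converts the rate $\psi$ (together with the optimization over the split point $s_0$ and over the smoothing/precision parameters hidden inside $\psi$) into the explicit constant $3C^4 e^{1+9/\alpha}$ and the numerical factor~$2$; one must also check that the integrated tail indeed produces an $O(1)$ constant, which requires $n$ to be sufficiently large relative to $d$ --- a condition inherited from Theorem~\ref{thm:lower-tail-covariance}. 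The remaining ingredients --- the reduction through Theorem~\ref{thm:ls-minimax}, the inequality $\tr(A^{-1}) \le d\,\lamin(A)^{-1}$, and the one-variable integration --- are routine.
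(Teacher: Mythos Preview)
Your proposal is correct and matches the paper's argument essentially step for step: the paper states that Proposition~\ref{prop:upper-well-constant} is a consequence of Corollary~\ref{cor:negative-moments-smallest}, which packages exactly the tail-to-moment conversion you describe (your layer-cake integration is the content of Lemma~\ref{lem:negative-moments}), combined with the reduction $\E[\excessrisk(\lsb)]\le (\sigma^2/n)\,\E[\tr(\wt\Sigma_n^{-1})]\le (\sigma^2 d/n)\,\E[\lamin(\wt\Sigma_n)^{-1}]$ via Theorem~\ref{thm:ls-minimax}. Your observation that the bound implicitly requires $n$ large enough relative to $d/\alpha$ (inherited from Theorem~\ref{thm:lower-tail-covariance} and Corollary~\ref{cor:negative-moments-smallest}) is also correct, even though the proposition's statement omits it.
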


Proposition~\ref{prop:upper-well-constant} (a consequence of Corollary~\ref{cor:negative-moments-smallest} from Section~\ref{sec:upper-bound-lower}) is
optimal in terms of the rate of convergence; however, it exhibits the suboptimal $2 C'$ factor in the leading term.
As we show next, it is possible to obtain an optimal constant in the first-order term (as well as a second-order term of the correct order) under a modest additional assumption.

\begin{assumption}[Norm kurtosis]
  \label{ass:fourth-moment-norm}
  $\E [ \| \Sigma^{-1/2} X \|^4 ] \leq \kappa d^2$ for some $\kappa > 0$.
\end{assumption}

\begin{remark}
  \label{rem:fourth-moment-assumptions}
  Since $\E [ \| \Sigma^{-1/2} X \|^2 ] = d$,
  Assumption~\ref{ass:fourth-moment-norm}
  is a bound on the kurtosis of the variable $\| \Sigma^{-1/2} X \|$.
  This condition is implied by the following $L^2$-$L^4$ equivalence for one-dimensional marginals of $X$: for every $\theta \in \R^d$, $\E [ \langle \theta, X\rangle^4 ]^{1/4} \leq \kappa^{1/4} \cdot \E [ \langle \theta, X\rangle^2]^{1/2}$ (Assumption~\ref{ass:equiv-l4-l2} below).
  Indeed, assuming that the latter holds, then taking $\theta = \Sigma^{-1/2} e_j$ (where $(e_j)_{1 \leq j \leq d}$ denotes the canonical basis of $\R^d$), so that $\langle \theta, X\rangle$ is the $j$-th coordinate $\wt X^j$ of $\wt X$, we get $\E [ (\wt X^j)^4 ] \leq \kappa \wt \E [ (\wt X^j)^2 ]^2 = \kappa$ (since $\E [\wt X \wt X^\top] = I_d$).
  This implies that
  \begin{align*}
    \E \big[ \| \wt X \|^4 \big]
    &= \E \bigg[ \bigg( \sum_{j=1}^d (\wt X^j)^2 \bigg)^2 \bigg]
    = \sum_{1\leq j, k \leq d} \E \big[ (\wt X^j)^2 (\wt X^k)^2 \big] \\
    &\leq \sum_{1\leq j, k \leq d} \E \big[ (\wt X^j)^4 \big]^{1/2} \E \big[ (\wt X^k)^4 \big]^{1/2} 
    \leq \sum_{1\leq j, k \leq d} \kappa^{1/2} \cdot \kappa^{1/2}
      = \kappa \cdot d^2
      \, ,
  \end{align*}
  where the first inequality above comes from the Cauchy-Schwarz inequality.
  The converse is false: if $\wt X$ is uniform on $\{ \sqrt{d} e_j : 1 \leq j \leq d \}$, then the first condition holds with $\kappa = 1$, while the second only holds for $\kappa \geq d$ (taking $\theta = e_1$).
  Hence, Assumption~\ref{ass:fourth-moment-norm} on the upper tail of $X$ is weaker than an $L^2$-$L^4$ equivalence of the one-dimensional marginals of $X$; on the other hand, we do require a small-ball condition (Assumption~\ref{ass:small-ball}) on the lower tail of $X$.
\end{remark}

\begin{theorem}[Upper bound in the well-specified case%
  ]
  \label{thm:upper-well-spec}
  Grant Assumptions~\ref{ass:small-ball} and~\ref{ass:fourth-moment-norm}.
  Let %
  $C' = 3 C^4 e^{1+9/\alpha}$ \textup(which only depends on $\alpha, C$\textup).
  If $n \geq \min (6 \alpha^{-1} d,  12 \alpha^{-1} \log (12 \alpha^{-1}) )$, then 
  \begin{equation}
    \label{eq:expected-inverse-trace}
    \frac{1}{n} \E \big[ \tr ( \wt \Sigma_n^{-1} ) \big]
    \leq \frac{d}{n} + 8 C' \kappa \Big( \frac{d}{n} \Big)^2
    \, .
  \end{equation}
  In particular, the minimax excess risk
  over the class $\wellclass (P_X, \sigma^2)$
  satisfies:
  \begin{equation}
    \label{eq:upper-bound-minimax}
    \frac{\sigma^2d}{n}
    \leq \inf_{\wh \beta_n} \sup_{P \in \wellclass (P_X, \sigma^2)} \E
    \big[ \excessrisk_P (\wh \beta_n) \big]
    \leq \frac{\sigma^2d}{n} \Big( 1 + 8 C' \frac{\kappa d}{n} \Big)
    \, .
  \end{equation}  
\end{theorem}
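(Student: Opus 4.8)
The plan is to reduce everything to the estimate \eqref{eq:expected-inverse-trace} on $\frac1n\E[\tr(\wt\Sigma_n^{-1})]$: once that is established, \eqref{eq:upper-bound-minimax} follows by multiplying by $\sigma^2$ and invoking Theorem~\ref{thm:ls-minimax}, while the left-hand inequality $\sigma^2 d/n \le \inf_{\wh\beta_n}\sup_P\E[\excessrisk_P(\wh\beta_n)]$ is exactly the Jensen-type bound noted right after Theorem~\ref{thm:ls-minimax} (and is implied a fortiori by Corollary~\ref{cor:minimax-lower-bound}). For the upper bound I start from the elementary operator identity $A^{-1} = I_d + (I_d - A) + A^{-1}(I_d - A)^2$, valid for any invertible $A$ (check by left-multiplying by $A$), applied to $A = \wt\Sigma_n$. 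Taking traces and using $\E[\wt\Sigma_n] = I_d$, the linear term has zero expectation, so
\[
  \E\big[\tr(\wt\Sigma_n^{-1})\big] = d + \E\big[\tr\big(\wt\Sigma_n^{-1}(I_d - \wt\Sigma_n)^2\big)\big],
\]
and the whole task becomes to bound the remainder by $8C'\kappa d^2/n$.

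Next I collect two moment inputs. Writing $\wt X_i = \Sigma^{-1/2}X_i$, each entry $(\wt\Sigma_n - I_d)_{jk} = \frac1n\sum_i(\wt X_i^j\wt X_i^k - \delta_{jk})$ is a centered average of i.i.d.\ terms, so $\E[\|\wt\Sigma_n - I_d\|_F^2] = \frac1n\sum_{j,k}\Var(\wt X^j\wt X^k) \le \frac1n\E[\|\wt X\|^4] \le \kappa d^2/n$ by Assumption~\ref{ass:fourth-moment-norm}; similarly $\E[(\tr\wt\Sigma_n)^2] = d^2 + \frac1n\Var(\|\wt X\|^2) \le d^2 + \kappa d^2/n \le 2\kappa d^2$. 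On the other side, Assumption~\ref{ass:small-ball} feeds the lower-tail analysis of Section~\ref{sec:small-ball-bounds}: under the sample-size hypothesis, Corollary~\ref{cor:negative-moments-smallest} gives $\E[\lamin(\wt\Sigma_n)^{-2}] \le A$ for some constant $A = A(\alpha,C)$, and Theorem~\ref{thm:lower-tail-covariance} gives $\P(\lamin(\wt\Sigma_n) \le 1/2) \le e^{-c_1 n}$ with $c_1 = c_1(\alpha,C) > 0$. Now comes the key step: split according to the good event $E := \{\lamin(\wt\Sigma_n) \ge 1/2\}$. On $E$, since $\wt\Sigma_n^{-1}$ and $(I_d-\wt\Sigma_n)^2$ are simultaneously diagonalizable, $\tr(\wt\Sigma_n^{-1}(I_d-\wt\Sigma_n)^2) = \sum_i \lambda_i^{-1}(1-\lambda_i)^2 \le \lamin(\wt\Sigma_n)^{-1}\|\wt\Sigma_n - I_d\|_F^2 \le 2\|\wt\Sigma_n - I_d\|_F^2$, which the Frobenius second moment controls — and, importantly, this uses no control of the \emph{upper} tail of $\wt\Sigma_n$ beyond that single moment. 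Reinstating the indicator costs an extra term $\E[\tr(I_d - \wt\Sigma_n)\indic E]$, which, by $\E[\tr(I_d-\wt\Sigma_n)] = 0$, equals $\E[(\tr\wt\Sigma_n - d)\indic{E^c}] \le \E[\tr(\wt\Sigma_n)\indic{E^c}] \le \E[(\tr\wt\Sigma_n)^2]^{1/2}\P(E^c)^{1/2}$ by Cauchy--Schwarz. On $E^c$ I use the crude deterministic bound $\tr(\wt\Sigma_n^{-1}) \le d\,\lamin(\wt\Sigma_n)^{-1}$ and Cauchy--Schwarz again, $\E[\tr(\wt\Sigma_n^{-1})\indic{E^c}] \le d\,\E[\lamin(\wt\Sigma_n)^{-2}]^{1/2}\P(E^c)^{1/2}$.

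Combining the three pieces and using the two tail estimates,
\[
  \E\big[\tr(\wt\Sigma_n^{-1})\big] \le d + 2\kappa\frac{d^2}{n} + \big(\sqrt{2\kappa}+\sqrt A\big)\,d\,e^{-c_1 n/2}.
\]
Since $x\mapsto x\,e^{-c_1 x/2}$ is bounded and $\kappa, d \ge 1$, the last term is at most $6C'\kappa d^2/n$ for the stated $C' = 3C^4 e^{1+9/\alpha}$, and then $\E[\tr(\wt\Sigma_n^{-1})] \le d + (2+6C')\kappa d^2/n \le d + 8C'\kappa d^2/n$ (using $C' \ge 1$), which is \eqref{eq:expected-inverse-trace}; dividing by $n$, multiplying by $\sigma^2$, and applying Theorem~\ref{thm:ls-minimax} gives \eqref{eq:upper-bound-minimax}.

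The main obstacle is precisely what forces the event-splitting: Assumption~\ref{ass:fourth-moment-norm} controls the upper tail of $\wt\Sigma_n$ only through a fourth moment of $\|\wt X\|$, so the naive Hölder bound $\E[\lamin^{-1}\|\wt\Sigma_n - I_d\|_F^2] \le \E[\lamin^{-p}]^{1/p}\E[\|\wt\Sigma_n - I_d\|_F^{2p'}]^{1/p'}$ is useless — making $\E[\lamin^{-p}]^{1/p}$ a constant requires $p$ of order $n$, hence $p'>1$ and thus a moment of $\|\wt X\|$ of order strictly above $4$, which simply is not available. Restricting to $\{\lamin(\wt\Sigma_n) \ge 1/2\}$ decouples the two factors (there $\lamin^{-1}\le 2$ is a harmless constant), at the mild cost of handling the complementary event $E^c$, which is exponentially rare by Theorem~\ref{thm:lower-tail-covariance}, so that even the crude bounds used there are negligible. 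A secondary point needing care is verifying that this exponentially small remainder is genuinely $O(\kappa d^2/n)$ and not larger; this is where the generous exponential factor $e^{9/\alpha}$ in the definition of $C'$ is consumed.
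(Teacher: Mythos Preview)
Your event-splitting strategy is sound in spirit and genuinely different from the paper's, but the proof contains a concrete gap: Theorem~\ref{thm:lower-tail-covariance} does \emph{not} give $\P(\lamin(\wt\Sigma_n)\le 1/2)\le e^{-c_1 n}$. The bound there is $\P(\lamin(\wt\Sigma_n)\le t)\le (C't)^{\alpha n/6}$, and since $C'=3C^4 e^{1+9/\alpha}\ge 3e^{10}\gg 2$, the right-hand side at $t=1/2$ exceeds $1$ and is vacuous. Theorem~\ref{thm:lower-tail-covariance} is a \emph{small-$t$} bound, useful only for $t<1/C'$; it says nothing about the event $\{\lamin<1/2\}$. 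Under Assumptions~\ref{ass:small-ball} and~\ref{ass:fourth-moment-norm} alone, no result in the paper yields exponential smallness of $\P(\lamin\le 1/2)$ (Oliveira's sub-Gaussian bound used in Proposition~\ref{prop:upper-misspecified} would, but it requires the stronger Assumption~\ref{ass:equiv-l4-l2}). The fix is easy---split on $E=\{\lamin\ge 1/(2C')\}$ instead---but then the ``good'' term becomes $2C'\kappa d^2/n$ rather than $2\kappa d^2/n$, and your final constant bookkeeping (which relied on the main term being $2\kappa d^2/n$ to leave room for $6C'\kappa d^2/n$ of remainder) has to be redone; it yields a bound of the form $c\,C'\kappa d^2/n$ for some absolute $c$, but not obviously $c=8$.

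Separately, your last paragraph mischaracterizes the H\"older route. The paper \emph{does} use H\"older with $q\asymp n$, and it is not useless: the point is Lemma~\ref{lem:trace-inverse-approx}, which gives $\tr(A^{-1})+\tr(A)-2d\le \max(1,\lamin(A)^{-1})\cdot\tr(|A-I_d|^{2/p})$. Because the exponent inside the trace is $2/p<2$, applying H\"older and then Jensen (convexity of $x\mapsto x^p$ on the eigenvalue average) collapses the second factor back to $\E[\tfrac1d\tr((\wt\Sigma_n-I_d)^2)]^{1/p}\le(\kappa d/n)^{1/p}$, which needs only the fourth moment of $\|\wt X\|$. The H\"older exponent is absorbed into the \emph{power of the eigenvalues inside the trace}, not into a moment of $\|\wt\Sigma_n-I_d\|_F$. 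This is precisely the device that sidesteps the obstruction you describe, and it avoids event-splitting entirely.
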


The proof of Theorem~\ref{thm:upper-well-spec} is given in Section~\ref{sec:proof-trace-inverse};
it relies in particular on Lemma~\ref{lem:trace-inverse-approx} herein and on Theorem~\ref{thm:lower-tail-covariance} from Section~\ref{sec:small-ball-bounds}.
From a technical point of view, some care is required since the assumptions of Theorem~\ref{thm:upper-well-spec} provide control on lower, rather than upper, relative deviations of $\wh \Sigma_n$ with respect to $\Sigma$.
As shown by the lower bound (established in Corollary~\ref{cor:minimax-lower-bound}), the constant in the first-order term in~\eqref{eq:upper-bound-minimax} is tight; in addition, one could see from a higher-order expansion (under additional moment assumptions) that the second-order term is also tight, up to the constant $8 C'$ factor.

Consider now the general misspecified case, namely the class $\misclass (P_X, \sigma^2)$.
Here, we will need the slightly stronger Assumption~\ref{ass:equiv-l4-l2}.

\begin{assumption}[$L^2$-$L^4$ norm equivalence]%
  \label{ass:equiv-l4-l2}
  There exists a constant $\kappa > 0$ such that, for every $\theta \in \R^d$,
  $\E [ \langle \theta, X\rangle^4 ] \leq \kappa \cdot \E [ \langle \theta, X\rangle^2 ]^2$.
\end{assumption}

\begin{proposition}[Upper bound in the misspecified case]
  \label{prop:upper-misspecified}
  Assume that $P_X$ satisfies Assumptions~\ref{ass:small-ball} and~\ref{ass:equiv-l4-l2}, and that
  \begin{equation*}
    \chi := \E \big[ \E [ \eps^2 %
    \cond X ]^2 \| \Sigma^{-1/2} X \|^4 \big] / d^2 < + \infty
  \end{equation*}
  \textup(note that $\chi \leq \E [ ( Y - \langle \beta^*, X\rangle )^4 \| \Sigma^{-1/2} X \|^4 ]/d^2$\textup).
  Then, {for $n \geq \max (96, 6 d) / \alpha$}, the risk of the OLS estimator satisfies
  \begin{equation}
    \label{eq:upper-misspecified}
    \E \big[ \excessrisk (\lsb) \big]
    \leq \frac{1}{n} \E \big[ (Y - \langle \beta^*, X\rangle)^2 \| \Sigma^{-1/2} X \|^2 \big] + 276 C'^2 \sqrt{\kappa \chi} \Big( \frac{d}{n} \Big)^{3/2}
    \, .
  \end{equation}
  In particular,
  we have
  \begin{equation}
    \label{eq:upper-misspecified-minimax}
    \frac{\sigma^2 d}{n}
    \leq
    \inf_{\wh \beta_n} \sup_{P \in \misclass (P_X, \sigma^2)}
    \E \big[ \excessrisk (\wh \beta_n) \big]
    \leq \frac{\sigma^2 d}{n} \bigg( 1 + 276 C'^2 \kappa \sqrt{\frac{d}{n}} \bigg)
    \, .
  \end{equation}
\end{proposition}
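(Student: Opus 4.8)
The plan is to analyze directly the $L^2$ risk $\E[\excessrisk(\lsb)] = \E[\|\lsb - \beta^*\|_\Sigma^2]$ of the OLS estimator, expanding it around the ``ideal'' quantity $\frac1n\E[(Y-\langle\beta^*,X\rangle)^2\|\Sigma^{-1/2}X\|^2]$ that appears in~\eqref{eq:upper-misspecified}. By the same computation as in Remark~\ref{rem:lin-transf} one may reduce to the case $\Sigma = I_d$: passing to the whitened design $\wt X = \Sigma^{-1/2}X$, the estimator $\lsb - \beta^*$ transforms as $\Sigma^{1/2}(\lsb - \beta^*)$, $\excessrisk(\lsb)$ is left unchanged, and all of Assumptions~\ref{ass:small-ball},~\ref{ass:equiv-l4-l2} and the quantity $\chi$ are preserved. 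So from now on I assume $\E[XX^\top] = I_d$. Writing $Y_i = \langle\beta^*,X_i\rangle + \eps_i$ with $\E[\eps X]=0$, the normal equations give the exact identity $\lsb - \beta^* = \wh\Sigma_n^{-1}\xi_n$, where $\xi_n := \frac1n\sum_{i=1}^n \eps_i X_i$, so that $\excessrisk(\lsb) = \|\wh\Sigma_n^{-1}\xi_n\|^2$.

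I would then write $\wh\Sigma_n^{-1} = I_d + (\wh\Sigma_n^{-1} - I_d)$ and expand
\begin{equation*}
  \excessrisk(\lsb) = \|\xi_n\|^2 + 2\langle\xi_n,\, (\wh\Sigma_n^{-1}-I_d)\xi_n\rangle + \|(\wh\Sigma_n^{-1}-I_d)\xi_n\|^2 \, .
\end{equation*}
Taking expectations, the first term is \emph{exactly} $\E[\|\xi_n\|^2] = \frac1n\E[\eps^2\|X\|^2]$ (the $i\neq j$ cross terms vanish since $\E[\eps X]=0$), which is the leading term of~\eqref{eq:upper-misspecified}; by Cauchy--Schwarz the middle term is at most $2\,\E[\|\xi_n\|^2]^{1/2}\,\E[\|(\wh\Sigma_n^{-1}-I_d)\xi_n\|^2]^{1/2}$. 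Hence everything reduces to showing $B := \E[\|(\wh\Sigma_n^{-1}-I_d)\xi_n\|^2] = O\big(C'^2\,\kappa\chi\,(d/n)^2\big)$, which (using $\E[\|\xi_n\|^2]\le\sigma^2 d/n$ and $\chi\le\sigma^4\kappa$) produces the $(d/n)^{3/2}$ correction through the cross term and an even smaller contribution from the last term.

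To bound $B$, factor $(\wh\Sigma_n^{-1}-I_d)\xi_n = \wh\Sigma_n^{-1}(I_d - \wh\Sigma_n)\xi_n$, so that $\|(\wh\Sigma_n^{-1}-I_d)\xi_n\| \le \lamin(\wh\Sigma_n)^{-1}\|(I_d-\wh\Sigma_n)\xi_n\|$, and apply Hölder's inequality with a large exponent $p \asymp n$ on the first factor and the conjugate exponent $p'$ close to $1$ on the second:
\begin{equation*}
  B \;\le\; \E\big[\lamin(\wh\Sigma_n)^{-2p}\big]^{1/p}\cdot\E\big[\|(I_d-\wh\Sigma_n)\xi_n\|^{2p'}\big]^{1/p'} \, .
\end{equation*}
The first factor is $O(C'^2)$ by Corollary~\ref{cor:negative-moments-smallest} — this is where the hypothesis $n \ge \max(96,6d)/\alpha$ enters, through Assumption~\ref{ass:small-ball} and Theorem~\ref{thm:lower-tail-covariance}. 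For the second factor, since $p'$ can be taken arbitrarily close to $1$, it reduces essentially to $\E[\|(I_d-\wh\Sigma_n)\xi_n\|^2]$, which I would compute by expanding the double sum $(I_d-\wh\Sigma_n)\xi_n = n^{-2}\sum_{i,j}(I_d - X_iX_i^\top)\eps_j X_j$ and separating the diagonal ($i=j$) and off-diagonal contributions: using $\E[XX^\top]=I_d$ and $\E[\eps X]=0$, the vast majority of the $O(n^4)$ terms vanish, leaving $O(n^2)$ surviving ones, the dominant being $\approx n^{-2}\,\E[\eps^2\,X^\top M X]$ with $M := \E[\|X\|^2 XX^\top] - I_d$ (which is positive, $M\mgeq 0$). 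Bounding $\E[\eps^2 X^\top M X] = \E[\E[\eps^2\cond X]\,X^\top M X]$ via $X^\top M X \le \opnorm{M}\|X\|^2 \le \tr(M)\|X\|^2$, the estimate $\tr(M) = \E[\|X\|^4]-d \le \kappa d^2$ (Remark~\ref{rem:fourth-moment-assumptions}), and Cauchy--Schwarz together with the definition of $\chi$, yields $\E[\|(I_d-\wh\Sigma_n)\xi_n\|^2] = O(\kappa\chi\,(d/n)^2)$, as desired. The bound over the class $\misclass(P_X,\sigma^2)$ in~\eqref{eq:upper-misspecified-minimax} then follows from $\E[\eps^2\cond X]\le\sigma^2$.

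The main obstacle, and the reason for the extra assumptions, is precisely this last estimate: $\eps$ is not assumed independent of $X$ (only $\E[\eps X]=0$), and one controls only fourth-order moments — of $\|\Sigma^{-1/2}X\|$ via Assumption~\ref{ass:equiv-l4-l2}, and of $\E[\eps^2\cond X]^{1/2}\|\Sigma^{-1/2}X\|$ via $\chi$. In particular one cannot afford a fourth moment of $\xi_n$ itself (that would require $\E[\eps^4\|X\|^4]<\infty$, stronger than $\chi<\infty$), which forces the whole argument through second-moment computations: the $\lamin^{-1}$ factor must be absorbed by raising it to a power $\asymp n$ and invoking the exponential lower-tail bound for $\lamin(\wh\Sigma_n)$ — the core technical result of the paper — rather than by any crude deterministic bound. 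Keeping track of the interplay between this large Hölder exponent, the resulting exponent $p'\to 1^+$ on $\|(I_d-\wh\Sigma_n)\xi_n\|$, and the sample-size threshold is the delicate part of the proof.
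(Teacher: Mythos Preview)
Your decomposition $\wh\Sigma_n^{-1} = I_d + \wh\Sigma_n^{-1}(I_d - \wh\Sigma_n)$ and the resulting expansion of $\|\wh\Sigma_n^{-1}\xi_n\|^2$ are natural and close in spirit to the paper's argument. However, there is a genuine gap at the H\"older step, which you yourself anticipate but do not resolve. You correctly observe that fourth moments of $\xi_n$ (hence of $\eps$) are unavailable under the hypothesis $\chi < \infty$; but the quantity $\E\big[\|(I_d-\wh\Sigma_n)\xi_n\|^{2p'}\big]$ with $p' > 1$ requires exactly such higher moments. Even with $p' = 1 + O(1/n)$, the $(2p')$-th moment of a quantity linear in the $\eps_i$ can be infinite whenever $\E[\,|\eps|^{2p'}\mid X\,] = \infty$, which the assumptions allow. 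Conditioning on $\mathbf X = (X_1,\dots,X_n)$ first does not help either: the resulting $\mathbf X$-measurable variable $W := \E\big[\|(I_d-\wh\Sigma_n)\xi_n\|^2 \,\big|\, \mathbf X\big]$ involves the $\|X_i\|$ to sixth order, and $\E[W^{p'}]$ is not controlled by fourth-moment assumptions alone. So the reduction ``essentially to $\E[\|(I_d-\wh\Sigma_n)\xi_n\|^2]$'' cannot be made rigorous under the stated hypotheses.

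The paper resolves this differently. It first splits $\eps = m(X) + (\eps - m(X))$ via Lemma~\ref{lem:risk-ols-general}, separating the risk into a misspecification term and a variance term. For each term it bounds $\|V_n\|_{\wt\Sigma_n^{-2}}^2 \le \|V_n\|^2 + \{\lamin(\wt\Sigma_n)^{-2}-1\}_+\,\|V_n\|^2$ and applies Cauchy--Schwarz with \emph{fixed} exponent $2$ (not $p\asymp n$). The fourth moment $\E[\|V_n\|^4]$ is then finite and controlled by $\chi$, because $V_n$ involves only $m(X_i)X_i$ or $\sigma(X_i)X_i$, never a raw $\eps_i$. The remaining factor $\E\big[\{\lamin^{-2}-1\}_+^2\big]^{1/2}$ is handled by writing $\{\lamin^{-2}-1\}_+ \le 2\,\{1-\lamin\}_+\,\lamin^{-2}$ and applying Cauchy--Schwarz again to split it into $\E\big[\{1-\lamin\}_+^4\big]^{1/4}$ and $\E\big[\lamin^{-8}\big]^{1/4}$. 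The second factor is $O(C'^2)$ by Corollary~\ref{cor:negative-moments-smallest} with $q = 8$ (this is where $n \ge 96/\alpha$ enters). The first factor is where the crucial $\sqrt{\kappa d/n}$ comes from, and it is obtained by integrating Oliveira's sub-Gaussian lower-tail bound for $\lamin(\wh\Sigma_n)$ under Assumption~\ref{ass:equiv-l4-l2}. This last ingredient --- Oliveira's bound --- is entirely absent from your proposal, yet it is precisely what supplies the smallness of $I_d - \wh\Sigma_n$ without ever raising any $X$- or $\eps$-dependent quantity above fourth power.
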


The proof of Proposition~\ref{prop:upper-misspecified} is provided in Section~\ref{sec:proof-upper-misspecified}; it combines results from Section~\ref{sec:small-ball-bounds} with a tail bound from \cite{oliveira2016covariance}.
Proposition~\ref{prop:upper-misspecified} shows that, under Assumptions~\ref{ass:small-ball} and~\ref{ass:equiv-l4-l2}, the minimax excess risk over the class $\misclass (P_X, \sigma^2)$ scales as $(1 + o (1)) \sigma^2 d / n$ as $d/n \to 0$.
This implies that the OLS estimator is asymptotically minimax on the misspecified class $\misclass (P_X, \sigma^2)$ when $d = o (n)$, and that independent Gaussian noise is asymptotically the least favorable
structure for the error~$\eps$.

\subsection{Parameter estimation}
\label{sec:parameter-estimation}

Let us
briefly discuss
how the results of this section obtained for prediction can be adapted to the problem of parameter estimation, where the loss of an estimate $\wh \beta_n$ given $\beta^*$ is $\| \wh \beta_n - \beta^* \|^2 $.

By the same proof as that of Theorem~\ref{thm:ls-minimax} (replacing the norm $\| \cdot \|_\Sigma$ by $\| \cdot \|$), the minimax excess risk over the classes $\gaussclass (P_X, \sigma^2)$ and $\wellclass (P_X, \sigma^2)$ is $(\sigma^2 / n) \E [ \tr (\wh \Sigma_n^{-1})]$, achieved by the OLS estimator.
By convexity of $A \mapsto \tr (A^{-1})$ over positive matrices \cite{lowner1934monotone}, this quantity is larger than $\sigma^2 \tr (\Sigma^{-1}) / n$.

In the case of centered Gaussian covariates, $\E [ \tr (\wh \Sigma_n^{-1})] = \tr ( \Sigma^{-1} \E [ \wt \Sigma_n^{-1} ] ) = \tr (\Sigma^{-1}) n / (n - d - 1)$ \cite{anderson2003introduction},
so the minimax risk is $\sigma^2 \tr (\Sigma^{-1}) / (n - d - 1)$.
On the other hand, the improved lower bound for general design of Corollary~\ref{cor:minimax-lower-bound} for prediction does not appear to extend to estimation.
The reason for this is that the map $A \mapsto A / (1 - \tr (A))$ is not convex over positive matrices for $d \geq 2$ (where convexity is defined with respect to the positive definite order, see \eg \cite[Section~3.6.2]{boyd2004convex}), although its trace is.

Finally, the results of Section~\ref{sec:small-ball-bounds} on the lower tail of $\wt \Sigma_n$ can be used to obtain upper bounds in a similar fashion as for prediction.
For instance, an analogue of Proposition~\ref{prop:upper-well-constant} can be directly obtained by bounding $\tr (\wh \Sigma_n^{-1}) \leq \lamin (\wt \Sigma_n)^{-1} \cdot \tr (\Sigma^{-1})$.
Since this work is primarily focused on prediction,
we do not elaborate further in this direction.

\section{Bounding the lower tail of a sample covariance matrix at all probability levels}
\label{sec:small-ball-bounds}

Throughout this section, up to replacing $X$ by $\Sigma^{-1/2} X$, we assume unless otherwise stated that $\E [ X X^\top] = I_d$.
Our aim is to obtain non-asymptotic large deviation inequalities of the form:
\begin{equation*}
  \P ( \lamin (\wh \Sigma_n) \leq t)
  \leq e^{- n \psi (t)}
\end{equation*}
where $\psi (t) \to \infty$ as $t \to 0^+$.
Existing bounds \cite{vershynin2012introduction,srivastava2013covariance,koltchinskii2015smallest,oliveira2016covariance} are typically sub-Gaussian bounds with $\psi (t) = c (1 - C \sqrt{d/n} - t)_+^2$ for some constants $c, C > 0$, which ``saturate''
for small $t$.
In this section, we %
study
the behavior of the large deviations for small values of $t$,
namely $t \in (0, c)$, where $c < 1$ is a fixed constant.
In Section~\ref{sec:lower-bound-lower-tail}, we provide a lower bound on these tail probabilities, namely an upper bound on $\psi$, valid for every distribution of $X$ when $d \geq 2$.
In Section~\ref{sec:upper-bound-lower}, we show that Assumption~\ref{ass:small-ball} is necessary and sufficient to obtain tail bounds of the optimal order.
Finally, in Section~\ref{sec:small-ball-indep} we show that Assumption~\ref{ass:small-ball} is naturally satisfied in the case of independent coordinates, under a mild regularity condition on their distributions.

\subsection{A general lower bound on the lower tail%
}
\label{sec:lower-bound-lower-tail}

First, Proposition~\ref{prop:small-ball-lower} below shows that in dimension $d \geq 2$, the probability of deviations of $\lamin(\wh \Sigma_n)$ cannot be arbitrarily small.

\begin{proposition}
  \label{prop:small-ball-lower}
  Assume that $d \geq 2$.
  Let $X$ be a random vector in $\R^d$ such that $\E [ X X^\top ] = \id$.
  Then, for every $t \leq 1$, 
  \begin{equation}
    \label{eq:small-ball-lower-1}
    \sup_{\theta \in S^{d-1}} \P ( | \langle \theta, X \rangle | \leq t )
    \geq 0.16 \cdot t
    \, ,
  \end{equation}
  and
  therefore
  \begin{equation}
    \label{eq:small-ball-lower-2}
    \P \big( \lamin (\wh \Sigma_n) \leq t \big)
    \geq (0.025 \cdot t)^{n/2}
    \, .
  \end{equation}
\end{proposition}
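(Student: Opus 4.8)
The plan is to deduce the large-deviation bound \eqref{eq:small-ball-lower-2} from the one-dimensional anti-concentration estimate \eqref{eq:small-ball-lower-1} by a short independence argument, and to establish \eqref{eq:small-ball-lower-1} itself by averaging the probability $\P(|\langle\theta,X\rangle|\leq t)$ over a circle of directions inside the plane spanned by two orthonormal vectors. One cannot hope to work with a single fixed $\theta$: a unit-variance marginal may carry no mass near the origin (\eg $X$ with Rademacher entries), so the hypothesis $d\geq 2$ is genuinely used.

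\textbf{Step 1: the marginal bound \eqref{eq:small-ball-lower-1}.} Fix orthonormal $e_1,e_2\in\R^d$, put $U=\langle e_1,X\rangle$, $V=\langle e_2,X\rangle$, and for $\phi\in[0,\pi)$ set $\theta_\phi=(\cos\phi)e_1+(\sin\phi)e_2\in S^{d-1}$. Since $\E[XX^\top]=\id$, we have $\E[U^2]=\E[V^2]=1$, hence $\E[U^2+V^2]=2$. I would bound the supremum in \eqref{eq:small-ball-lower-1} below by $\frac1\pi\int_0^\pi\P(|\langle\theta_\phi,X\rangle|\leq t)\,d\phi$ and apply Fubini. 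Writing $(U,V)=(\rho\cos\psi,\rho\sin\psi)$ gives $\langle\theta_\phi,X\rangle=\rho\cos(\phi-\psi)$, so for a fixed outcome the fraction of admissible $\phi$ is exactly $\frac2\pi\arcsin(\min(t/\rho,1))$. Using $\arcsin x\geq x$ and $t\leq 1$, this fraction is at least $\frac{2t}{\pi}\cdot\frac1{\max(\rho,1)}$ with $\rho=\sqrt{U^2+V^2}$, and $\E[1/\max(\rho,1)]\geq M^{-1}(1-\E[\rho^2]/M^2)=M^{-1}(1-2/M^2)$ for any $M\geq 1$ by Markov's inequality; optimizing (say $M=\sqrt6$) produces an overall constant $\frac{4}{3\pi\sqrt6}>0.16$, which proves \eqref{eq:small-ball-lower-1}.

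\textbf{Step 2: the tail bound \eqref{eq:small-ball-lower-2}.} For any fixed $\theta\in S^{d-1}$, $\lamin(\wh\Sigma_n)\leq\theta^\top\wh\Sigma_n\theta=n^{-1}\sum_{i=1}^n\langle\theta,X_i\rangle^2$, so the event $\{|\langle\theta,X_i\rangle|\leq\sqrt t\ \text{for all }i\}$ forces $\lamin(\wh\Sigma_n)\leq t$. Apply \eqref{eq:small-ball-lower-1} with $\sqrt t$ in place of $t$ (valid since $0<\sqrt t\leq 1$); since Step 1 gives a constant strictly above $0.16$, there is $\theta^*\in S^{d-1}$ with $\P(|\langle\theta^*,X\rangle|\leq\sqrt t)\geq 0.16\sqrt t$, even if the supremum is not attained. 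Independence of $X_1,\dots,X_n$ then yields
\[
  \P\big(\lamin(\wh\Sigma_n)\leq t\big)
  \geq\prod_{i=1}^n\P\big(|\langle\theta^*,X_i\rangle|\leq\sqrt t\big)
  \geq(0.16\sqrt t)^n=(0.0256\,t)^{n/2}\geq(0.025\,t)^{n/2}.
\]

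\textbf{Main obstacle.} The only substantial point is Step 1: converting "unit variance along two orthogonal directions" into honest anti-concentration of the random marginal $\langle\theta_\phi,X\rangle$. The polar identity $\langle\theta_\phi,X\rangle=\rho\cos(\phi-\psi)$ is what renders the averaged probability computable in closed form; everything after it — the elementary bound $\arcsin x\geq x$, a second-moment bound on $\rho$ via Markov, and keeping track of the numerical constant so that it clears $0.16$ with a margin (the margin is exactly what lets Step 2 extract $\theta^*$ when the supremum is not attained) — is routine. Step 2 is then immediate.
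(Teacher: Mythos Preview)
Your proof is correct, and Step~2 is essentially the paper's Lemma~\ref{lem:smallest-eigenvalue-min}. The point of interest is Step~1, where your route differs from the paper's. The paper averages over the full sphere $S^{d-1}$: with $\Theta$ uniform on $S^{d-1}$ and independent of $X$, one has $\langle\Theta,X\rangle\stackrel{d}{=}\|X\|\cdot\Theta_1$, so the problem reduces to a Markov bound on $\|X\|$ (using $\E\|X\|^2=d$) together with a lower bound on the density of the first coordinate $\Theta_1$ of a uniform point on $S^{d-1}$. The latter is obtained via the coarea formula and requires a case split over $d=2$, $d=3$, and $d\geq 4$.

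You instead restrict from the outset to an arbitrary two-plane and average only over a circle of directions there. The polar identity $\langle\theta_\phi,X\rangle=\rho\cos(\phi-\psi)$ then gives the averaged probability in closed form as $\tfrac{2}{\pi}\arcsin(\min(t/\rho,1))$, and the remaining Markov bound uses only $\E[\rho^2]=2$. This is strictly more elementary---no coarea formula, no dimension-dependent analysis---and it makes transparent why exactly $d\geq 2$ is the right hypothesis. The resulting constant $\tfrac{4}{3\pi\sqrt6}\approx 0.173$ is also marginally sharper than the paper's $\tfrac{3}{4\pi\sqrt2}\approx 0.169$, and in both cases the margin above $0.16$ is what lets Step~2 extract a near-maximizer $\theta^*$. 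What the paper's approach buys, on the other hand, is a dimension-aware estimate: its intermediate bound $\P(|\Theta_1|\leq t/(2\sqrt d))$ tracks $d$ explicitly, which could in principle be exploited if one wanted constants that improve with $d$.
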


The assumption that $d \geq 2$ is necessary since for $d=1$, if $X = 1$ almost surely, then $\lamin (\wh \Sigma_n) = 1$ almost surely.
Proposition~\ref{prop:small-ball-lower} is proved
in Section~\ref{sec:proof-lower-lower}
through a probabilistic argument, namely by considering a random vector $\theta$ drawn uniformly on the sphere $S^{d-1}$.

Proposition~\ref{prop:small-ball-lower} shows that $\P (\lamin (\wh \Sigma_n) \leq t)$ is at least $(C t)^{c n}$, where $C = 0.025$ and $c= 1/2$ are absolute constants; this bound writes $e^{-n \psi (t)}$, where $\psi (t) \asymp \log (1/t)$ as $t \to 0^+$.
In the following section, we study matching upper bounds on this lower tail.

\subsection{Optimal control of the lower tail%
}
\label{sec:upper-bound-lower}

In this section, we study conditions under which an upper bound matching the lower bound from Proposition~\ref{prop:small-ball-lower} can be obtained.
We start by noting that Assumption~\ref{ass:small-ball} is necessary to obtain such bounds:

\begin{remark}[Necessity of small ball condition]
  \label{rem:necessary-small-ball}
  Assume that there exists $c_1, c_2 > 0$ such that $\P (\lamin (\wh \Sigma_n) \leq t) \leq (c_1 t)^{c_2 n}$ for all $t \in (0, 1)$.
  Then, Lemma~\ref{lem:smallest-eigenvalue-min} below implies that $\sup_{\theta \in S^{d-1}} \P ( |\langle \theta, X\rangle | \leq t ) \leq (c_1 t^2)^{c_2}$ for all $t \in (0, 1)$.
  Hence,
  $P_X$ satisfies Assumption~\ref{ass:small-ball} with $C = \sqrt{c_1}$ and $\alpha = 2 c_2$.
\end{remark}

\begin{lemma}
  \label{lem:smallest-eigenvalue-min}
  For $t \in (0, 1)$, let $p_t = \sup_{\theta \in S^{d-1}} \P ( |\langle \theta, X\rangle | \leq t )$.
  Then, $\P (\lamin (\wh \Sigma_n) \leq t) \geq p_{\sqrt{t}}^n$.
\end{lemma}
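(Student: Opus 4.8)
The plan is to exploit the variational characterization of the smallest eigenvalue and the fact that a single well-chosen direction suffices to witness a small eigenvalue. Fix any $\theta \in S^{d-1}$. Then
\begin{equation*}
  \lamin (\wh \Sigma_n)
  \leq \langle \wh \Sigma_n \theta, \theta \rangle
  = \frac{1}{n} \sum_{i=1}^n \langle \theta, X_i \rangle^2
  \, .
\end{equation*}
Consequently, on the event $A_\theta := \{ |\langle \theta, X_i \rangle| \leq \sqrt{t} \ \text{for all} \ i = 1, \dots, n \}$, every summand is at most $t$, so that $\lamin (\wh \Sigma_n) \leq t$; in other words $A_\theta \subseteq \{ \lamin (\wh \Sigma_n) \leq t \}$.

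Next I would use independence: since $X_1, \dots, X_n$ are \iid copies of $X$, the event $A_\theta$ is an intersection of $n$ independent events of equal probability, hence $\P (A_\theta) = \P (|\langle \theta, X \rangle| \leq \sqrt{t})^n$. Combining with the inclusion above gives, for every fixed $\theta \in S^{d-1}$,
\begin{equation*}
  \P \big( \lamin (\wh \Sigma_n) \leq t \big)
  \geq \P \big( |\langle \theta, X \rangle| \leq \sqrt{t} \big)^n
  \, .
\end{equation*}
Taking the supremum over $\theta \in S^{d-1}$ on the right-hand side (the left-hand side not depending on $\theta$) yields $\P ( \lamin (\wh \Sigma_n) \leq t ) \geq p_{\sqrt{t}}^{\, n}$, as claimed.

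There is essentially no obstacle here: the only point worth a remark is that the supremum defining $p_{\sqrt t}$ need not be attained, but this causes no difficulty since the displayed inequality holds for \emph{every} $\theta$, so passing to the supremum (equivalently, along a maximizing sequence) is immediate. The argument is purely deterministic in its geometric part and elementary in its probabilistic part.
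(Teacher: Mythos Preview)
Your proof is correct and follows essentially the same approach as the paper: fix a direction $\theta$, use the variational bound $\lamin(\wh\Sigma_n)\leq \langle\wh\Sigma_n\theta,\theta\rangle$, apply independence to get $\P(\lamin(\wh\Sigma_n)\leq t)\geq \P(|\langle\theta,X\rangle|\leq\sqrt t)^n$, and then pass to the supremum over $\theta$. The only cosmetic difference is that the paper handles possible non-attainment of the supremum by first picking $p<p_{\sqrt t}$ and a corresponding $\theta$, then letting $p\to p_{\sqrt t}$, whereas you (equivalently) take the supremum directly and note that $x\mapsto x^n$ is increasing.
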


\begin{proof}[Proof of Lemma~\ref{lem:smallest-eigenvalue-min}]
  Let $p < p_{\sqrt{t}}$.
  By definition of $p_{\sqrt{t}}$, there exists $\theta \in S^{d-1}$ such that $\P (\langle \theta, X\rangle^2 \leq t) \geq p$.
  Hence, by independence, with probability at least $p^n$, $\langle \theta, X_i\rangle^2 \leq t$ for $i = 1, \dots, n$, so that $\lamin (\wh \Sigma_n) \leq \langle \wh \Sigma_n \theta, \theta \rangle \leq t$.
  Taking $p \to p_{\sqrt{t}}$ concludes the proof.
\end{proof}

As Theorem~\ref{thm:lower-tail-covariance} shows, Assumption~\ref{ass:small-ball} is also sufficient to obtain an optimal control on the lower tail.

\begin{theorem}
  \label{thm:lower-tail-covariance}
  Let $X$ be a random vector in $\R^d$.
  Assume that $\E [X X^\top] = \id$ and that $X$ satisfies Assumption~\ref{ass:small-ball}.
  If $n \geq 6 d / \alpha$,
  then for every $t \in (0, 1)$\textup:
  \begin{equation}
    \label{eq:lower-tail-covariance}
    \P \big( \lamin (\wh \Sigma_n) \leq t \big)
    \leq (C' t)^{\alpha n/6}
  \end{equation}
  where $C' = 3 C^4 e^{1+9/\alpha}$.
\end{theorem}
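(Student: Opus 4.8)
The plan is to bound $\P(\lamin(\wh\Sigma_n)\le t)$ by the PAC-Bayesian empirical-process method of \cite{oliveira2016covariance}, with two modifications that keep the bound informative as $t\to0$: a truncation of the $X_i$'s (only a second moment of $\|X\|$ being available), and a \emph{compactly supported} rather than Gaussian smoothing of the sphere. First I would write $\lamin(\wh\Sigma_n)=\inf_{\theta\in S^{d-1}}\tfrac1n\sum_{i=1}^n\langle\theta,X_i\rangle^2$ and note that replacing each $X_i$ by $\bar X_i:=X_i\,\mathbf 1(\|X_i\|\le R)$ only decreases this infimum, so it suffices to bound $\P(\lamin(\bar\Sigma_n)\le t)$ with $\bar\Sigma_n=n^{-1}\sum_i\bar X_i\bar X_i^\top$; the radius $R=R(t)$ is chosen so that $\P(\|X\|>R)\le d/R^2$ (Markov, using $\E\|X\|^2=\tr\Sigma=d$) is a small power of $t$, while $\|\bar X_i\|\le R$ holds surely. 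Next I would replace the quadratic by the bounded surrogate $\psi_\tau(u):=e^{-u^2/\tau}\in(0,1]$, for a threshold $\tau\asymp t$: on $\{\lamin(\bar\Sigma_n)\le t\}$, if $\theta$ realizes the infimum then Jensen's inequality gives $\tfrac1n\sum_i\psi_\tau(\langle\theta,\bar X_i\rangle)\ge e^{-\lamin(\bar\Sigma_n)/\tau}\ge e^{-t/\tau}$, a constant bounded away from $0$ once $\tau/t$ is a large enough constant. Conversely, Assumption~\ref{ass:small-ball} controls the exponential moment from the other side: for any $\theta'$ with $\|\theta'\|=r$, integrating $\P(\langle\theta',X\rangle^2\le v)\le (Cv^{1/2}/r)^\alpha$ against the law of $v\mapsto e^{-v/\tau}$ yields $\E[\psi_\tau(\langle\theta',\bar X\rangle)]\le (C^2\tau/r^2)^{\alpha/2}+d/R^2$, which is small as $t\to0$; hence $\{\exists\theta:\tfrac1n\sum_i\psi_\tau(\langle\theta,\bar X_i\rangle)\gtrsim1\}$ is genuinely a large-deviation event.

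To pass from a fixed direction to a uniform statement, I would smooth each $\theta\in S^{d-1}$ into the uniform law $Q_\theta$ on the Euclidean ball $B(\theta,\rho)$, and compare it to the fixed reference $\pi$, the uniform law on the annulus $\{\tfrac12\le\|x\|\le2\}$; for $\rho\le\tfrac12$ one has $Q_\theta\ll\pi$ and $\kl(Q_\theta\,\Vert\,\pi)\le d\log(2/\rho)$. The point of a compactly supported smoothing (rather than a Gaussian) is that the covariance of $Q_\theta$ equals $\tfrac{\rho^2}{d+2}I_d$ — the $1/d$ is free from the geometry of the ball — so that Jensen (convexity of $z\mapsto e^{-z/\tau}$) gives $\E_{\theta'\sim Q_\theta}[\psi_\tau(\langle\theta',\bar X_i\rangle)]\ge \psi_\tau(\langle\theta,\bar X_i\rangle)\,e^{-\rho^2\|\bar X_i\|^2/((d+2)\tau)}\ge \psi_\tau(\langle\theta,\bar X_i\rangle)\,e^{-\rho^2R^2/((d+2)\tau)}$, and since $R^2\asymp d\cdot t^{-\Theta(\alpha)}$ the exponent $\rho^2R^2/((d+2)\tau)$ is of order $\rho^2 t^{-\Theta(\alpha)}/\tau$, \emph{free of $d$}. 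Thus $\rho$ can be taken as a $d$-free power of $t$, so $\kl(Q_\theta\,\Vert\,\pi)\asymp d\log(1/t)$ with no $d\log d$ term; a Gaussian smoothing would force either a $d\log d$ term in the relative entropy or $\rho\asymp 1/\sqrt d$, which reintroduces it. Combining with the previous step, on $\{\lamin(\wh\Sigma_n)\le t\}$ there is $\theta\in S^{d-1}$ with $\E_{\theta'\sim Q_\theta}[\tfrac1n\sum_i\psi_\tau(\langle\theta',\bar X_i\rangle)]\ge\eta$ for an absolute constant $\eta$ close to $1$.

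It remains to assemble the estimate. By the Donsker--Varadhan formula, for every $\lambda>0$ and every $\theta$ one has, deterministically given the data, $\lambda\,\E_{\theta'\sim Q_\theta}\!\big[\sum_i\psi_\tau(\langle\theta',\bar X_i\rangle)\big]-\kl(Q_\theta\,\Vert\,\pi)\le\log\E_{\theta'\sim\pi}\exp\!\big(\lambda\sum_i\psi_\tau(\langle\theta',\bar X_i\rangle)\big)$; so the event above is contained in $\{\E_{\theta'\sim\pi}\exp(\lambda\sum_i\psi_\tau(\langle\theta',\bar X_i\rangle))\ge e^{\lambda\eta n}(\rho/2)^d\}$, and Markov's inequality together with Fubini, the bound $e^{\lambda\psi}\le 1+(e^\lambda-1)\psi$ on $[0,1]$, and the exponential-moment estimate $\E[\psi_\tau(\langle\theta',\bar X\rangle)]\le B$ (uniform over $\|\theta'\|\in[\tfrac12,2]$, so $B\asymp (C^2\tau)^{\alpha/2}+d/R^2$) give $\P(\lamin(\wh\Sigma_n)\le t)\le (2/\rho)^d\exp(-\lambda\eta n+n(e^\lambda-1)B)$. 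Choosing $R^2\asymp d(C^2\tau)^{-\alpha/2}$ balances $B\asymp (C^2 t)^{\alpha/2}$, optimizing $\lambda$ turns the exponential factor into $(eB)^{\Theta(n)}\asymp t^{\Theta(\alpha n)}$, and inserting the chosen $\rho$ (a power of $t$ times a constant depending only on $\alpha,C$) makes $(2/\rho)^d$ a factor $t^{-\Theta(d)}$; since $n\ge 6d/\alpha$ lets the $t^{\Theta(\alpha n)}$ term absorb it, one obtains $\P(\lamin(\wh\Sigma_n)\le t)\le (C't)^{\alpha n/6}$, the exact exponent $\alpha n/6$, the threshold $n\ge 6d/\alpha$ and the constant $C'=3C^4 e^{1+9/\alpha}$ coming out of the bookkeeping of $\eta$, the balancing of $B$, the value of $\tau/t$ and the constant in $\rho$.

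The step I expect to be the main obstacle is precisely this calibration: as $t\to0$ the smoothing width $\rho$ must shrink (so smoothing does not destroy the lower bound $e^{-t/\tau}$), yet $\kl(Q_\theta\,\Vert\,\pi)$ must stay within $\tfrac{\alpha n}{6}\log(1/t)$ so it is swallowed by $t^{\Theta(\alpha n)}$, while $\tau$ must be tied to $t$ so that $\E[\psi_\tau]$ is a comparable power of $t$. With a Gaussian smoothing the relative entropy carries an irreducible $d\log d$ (or blows up like $\rho^{-2}$) that cannot be absorbed even when $n\gtrsim d/\alpha$; the non-Gaussian choice, together with the $t$-dependent truncation radius that tames the heavy upper tail of $\|X\|$ using only a second moment, is what makes the scheme close. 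Verifying that the truncation degrades the small-ball constant only by an absolute factor, and tracking the numerical constants down to $3C^4e^{1+9/\alpha}$, are the remaining — routine — points.
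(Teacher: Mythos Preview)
Your approach is correct and shares the paper's core strategy: PAC-Bayesian smoothing of the empirical process with a \emph{compactly supported} (non-Gaussian) posterior so that the entropy scales as $d\log(1/\rho)$ rather than $d/\rho^2$, combined with a truncation of $X$ so that the approximation term is governed by $\tr(\wh\Sigma_n)/d$ rather than $\opnorm{\wh\Sigma_n}$. You correctly identified both of these as the decisive refinements over the Gaussian-smoothing argument of \cite{oliveira2016covariance}.

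The implementation details differ in several places, and it is worth spelling out the trade-offs. First, the paper smooths on the sphere itself: the prior $\pi$ is uniform on $S^{d-1}$ and the posterior $\rho_{v,\gamma}$ is uniform on the spherical cap $\{\theta\in S^{d-1}:\|\theta-v\|\le\gamma\}$. A symmetry argument (conjugation by isometries fixing $v$) then yields the exact decomposition $\int\langle\Sigma\theta,\theta\rangle\,\rho_{v,\gamma}(\di\theta)=(1-\phi(\gamma))\langle\Sigma v,v\rangle+\phi(\gamma)\,d^{-1}\tr(\Sigma)$; your Jensen-on-balls computation achieves the same isotropic form via the covariance of the uniform law on a Euclidean ball. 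Second, the paper truncates by \emph{rescaling}, $X'=(1\wedge\sqrt{d}/\|X\|)X$, at the fixed radius $\sqrt{d}$; this makes $\tr(\wh\Sigma_n')/d\le1$ deterministically and degrades the small-ball exponent once (from $\alpha$ to $2\alpha/(2+\alpha)\ge\alpha/3$). You instead use an indicator truncation at a $t$-dependent radius $R(t)$, which keeps the small-ball exponent $\alpha$ but introduces the additive $d/R^2$ term in $B$ to be balanced later. Third, the paper works directly with the Laplace transform $\E[e^{-\lambda\langle\theta,X'\rangle^2}]$ and a single tuning parameter $\lambda$ (optimized to $\lambda'\asymp e^{6u/\alpha}$), whereas you pass through the bounded surrogate $\psi_\tau$ and a Chernoff argument for $[0,1]$-valued variables; these are equivalent up to constants. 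Your route is arguably a bit more modular; the paper's route keeps fewer parameters in play (a fixed truncation level, a single $\lambda$, and $\gamma^2=d/(2\lambda n)$), which is what allows the explicit constant $C'=3C^4e^{1+9/\alpha}$ to fall out cleanly --- with your choices the bookkeeping would lead to a constant of the same form but not literally the same number.
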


Theorem~\ref{thm:lower-tail-covariance} can be stated in the non-isotropic case, where $\Sigma = \E [X X^\top]$ is arbitrary:

\begin{corollary}
  \label{cor:lower-tail-noniso}
  Let $X$ be a random vector in $\R^d$ such that $\E [ \| X \|^2] < + \infty$, and let $\Sigma = \E [X X^\top]$.
  Assume that $X$ satisfies Assumption~\ref{ass:small-ball}.
  Then, if $d/n \leq \alpha/6$, for every $t \in (0, 1)$, the empirical covariance matrix $\wh \Sigma_n$ formed with an \iid sample of size $n$ satisfies
  \begin{equation}
    \label{eq:lowertail-noniso}
    \wh \Sigma_n \mgeq t \Sigma
  \end{equation}
  with probability at least $1 - (C' t)^{\alpha n /6}$, 
  where $C'$ is as in Theorem~\ref{thm:lower-tail-covariance}.
\end{corollary}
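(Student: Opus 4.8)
The plan is to reduce Corollary~\ref{cor:lower-tail-noniso} to the isotropic case treated in Theorem~\ref{thm:lower-tail-covariance} via whitening. As usual we may assume $\Sigma$ is invertible (otherwise one restricts to the linear span of the support of $X$, on which the statement is read). Set $\wt X := \Sigma^{-1/2} X$ and let $\wt X_1, \dots, \wt X_n$ be the \iid sample obtained by whitening $X_1, \dots, X_n$. Then $\E[\wt X \wt X^\top] = \Sigma^{-1/2} \Sigma \Sigma^{-1/2} = \id$, so $\wt X$ is isotropic; and Assumption~\ref{ass:small-ball} is, by its very formulation, a condition on $\wt X$, hence it holds for $\wt X$ with the same constants $C \geq 1$ and $\alpha \in (0,1]$. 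Since $d/n \leq \alpha/6$ is exactly the hypothesis $n \geq 6d/\alpha$ of Theorem~\ref{thm:lower-tail-covariance}, that theorem applies to the sample $\wt X_1, \dots, \wt X_n$ and yields, for every $t \in (0,1)$,
\[
  \P\big( \lamin(\wt \Sigma_n) \leq t \big) \leq (C' t)^{\alpha n / 6},
  \qquad \wt \Sigma_n := \frac1n \sum_{i=1}^n \wt X_i \wt X_i^\top = \Sigma^{-1/2} \wh\Sigma_n \Sigma^{-1/2},
\]
with $C' = 3 C^4 e^{1 + 9/\alpha}$ as in Theorem~\ref{thm:lower-tail-covariance}.

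The second step is a purely linear-algebraic translation of the event. For any $t$, the inequality $\lamin(\wt\Sigma_n) \geq t$ is equivalent to $\wt\Sigma_n - t\,\id \mgeq 0$, i.e. to $\Sigma^{-1/2}(\wh\Sigma_n - t\Sigma)\Sigma^{-1/2} \mgeq 0$; conjugating by the invertible matrix $\Sigma^{1/2}$ preserves positive semidefiniteness, so this is in turn equivalent to $\wh\Sigma_n - t\Sigma \mgeq 0$, that is $\wh\Sigma_n \mgeq t\Sigma$. Hence the events $\{\lamin(\wt\Sigma_n) \geq t\}$ and $\{\wh\Sigma_n \mgeq t\Sigma\}$ coincide, and combining with the tail bound from the first step gives $\P(\wh\Sigma_n \mgeq t\Sigma) \geq 1 - (C' t)^{\alpha n/6}$ for every $t \in (0,1)$, which is precisely the claim.

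I do not anticipate any genuine difficulty: the corollary is a cosmetic, covariance-dependent reformulation of Theorem~\ref{thm:lower-tail-covariance}, and the only points to verify are that whitening is legitimate (invertibility of $\Sigma$, assumed throughout), that Assumption~\ref{ass:small-ball} transfers verbatim to $\wt X$ (immediate, since it is stated for $\wt X$), and that the parameter regimes $d/n \leq \alpha/6$ and $n \geq 6d/\alpha$ match. The conjugation argument could alternatively be phrased through $\lamin(\wt\Sigma_n) = \opnorm{\Sigma^{1/2}\wh\Sigma_n^{-1}\Sigma^{1/2}}^{-1}$ when $\wh\Sigma_n$ is invertible, but the order-based formulation above avoids any case distinction.
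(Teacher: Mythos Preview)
Your proof is correct and follows essentially the same approach as the paper: reduce to the span of the support of $X$ so that $\Sigma$ is invertible, whiten to obtain an isotropic vector to which Theorem~\ref{thm:lower-tail-covariance} applies, and translate $\lamin(\wt\Sigma_n)\geq t$ back to $\wh\Sigma_n\mgeq t\Sigma$ via conjugation by $\Sigma^{1/2}$. Your write-up is in fact slightly more explicit than the paper's on the linear-algebraic equivalence, but the argument is the same.
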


\begin{proof}[Proof of Corollary~\ref{cor:lower-tail-noniso}]
  We may assume that $\Sigma$ is invertible: otherwise, we can just consider the span of the support of $X$, a subspace of $\R^d$ of dimension $d' \leq d \leq \alpha n / 6$.
  Now, let $\wt X = \Sigma^{-1/2} X$; by definition, $\E [\wt X \wt X^\top] = \id$, and $\wt X$ satisfies Assumption~\ref{ass:small-ball} since $X$ does.
  By Theorem~\ref{thm:lower-tail-covariance}, with probability at least $1 - (C' t)^{\alpha n/6}$, $\lamin (\Sigma^{-1/2} \wh \Sigma_n \Sigma^{-1/2}) \geq t$, which amounts to $\Sigma^{-1/2} \wh \Sigma_n \Sigma^{-1/2} \mgeq t \id$, and thus $\wh \Sigma_n \mgeq t \Sigma$.
\end{proof}

It is worth noting that Theorem~\ref{thm:lower-tail-covariance} does not require any condition on the upper tail of $X X^\top$, aside from the
assumption $\E [X X^\top] = I_d$.
Indeed, as noted in Remark~\ref{rem:necessary-small-ball}, it only requires the necessary Assumption~\ref{ass:small-ball}.
In particular, it does not require any sub-Gaussian assumption on $X$,
similarly to the results from \cite{koltchinskii2015smallest,oliveira2016covariance,vandegeer2014higher,yaskov2014lower,yaskov2015sharp};
this owes to the fact that lower bounds for sums of non-negative random variables hold under weak assumptions.

\begin{remark}[Extension to random quadratic forms]
  Theorem~\ref{thm:lower-tail-covariance} extends (up to
  straightforward changes in notations) to random quadratic forms $v \mapsto \langle A_i v, v\rangle$ where $A_1, \dots, A_n$ are positive semi-definite and \iid, with $\E [A_i] = I_d$ (Theorem~\ref{thm:lower-tail-covariance} corresponds to the rank $1$ case where $A_i = X_i X_i^\top$).
  On the other hand, the lower bound of Proposition~\ref{prop:small-ball-lower} is specific to rank $1$ matrices,
  as can be seen by considering the counterexample where $A_i = I_d$ almost surely.
\end{remark}

\begin{remark}[Gaussian case]
  It may be worth comparing the bound~\eqref{eq:lower-tail-covariance} to known estimates in the special case of the Gaussian distribution, namely $X \sim \gaussdist (0, I_d)$.
  In this case, the joint density of eigenvalues of $\wh \Sigma_n$ admits a closed-form expression,
  which provides by marginalization the density of $\lamin (\wh \Sigma_n)$ \cite[p. 533]{edelman1988eigenvalues}.
  From this expression, the following bound is deduced in~\cite[eq.~(99)]{wu2012optimal}:
  \begin{equation*}
    \P \Big( \lamin (\wh \Sigma_n) \leq t \Big)
    \leq \frac{2 (n/2)^{(n - d + 1)/2}}{n - d + 1} \frac{\sqrt{\pi} \Gamma (\frac{n+1}{2})}{\Gamma (\frac{d}{2}) \Gamma (\frac{n - d + 1}{2}) \Gamma (\frac{n - d + 2}{2})} t^{n - d + 1}
    \, .
  \end{equation*}
  Letting $d = d_n$ such that $d_n / n \to \alpha \in (0, 1)$ and applying Stirling's approximation, this implies the following large deviation estimate~\cite[Lemma~1]{wu2012optimal}: for any fixed $t \in (0, 1)$,
  \begin{align*}
    \P \Big( \lamin (\wh \Sigma_n) \leq t \Big)
    &\leq
    \Big( \frac{n}{d} \Big)^{d/2} \bigg( \frac{\sqrt{e} t}{1 - d/n} \bigg)^{n - d + o (n)}
    \, .
  \end{align*}
  The bound~\eqref{eq:lower-tail-covariance} is of this form; it holds for general distributions of $X$, at the cost of worst constants in the Gaussian case.
\end{remark}

\paragraph{Idea of the proof.}

The proof of Theorem~\ref{thm:lower-tail-covariance} is provided in Section~\ref{sec:proof-lower-tail}.
It builds on the analysis of \cite{oliveira2016covariance}, who obtains sub-Gaussian deviation bounds under fourth moment assumptions (Assumption~\ref{ass:equiv-l4-l2}), although some refinements are needed to handle our considered regime (with $t$ small).
We now discuss some general ideas about the proof technique.

The proof starts with the representation of $\lamin (\wh \Sigma_n)$ as the infimum of an empirical process:
\begin{equation}
  \label{eq:lamin-inf-process}
  \lamin (\wh \Sigma_n)
  = \inf_{\theta \in S^{d-1}} \langle \wh \Sigma_n \theta, \theta\rangle
  = \inf_{\theta \in S^{d-1}} \bigg\{ Z (\theta) := \frac{1}{n} \sum_{i=1}^n \langle \theta, X_i\rangle^2 \bigg\}
  \, .
\end{equation}
In order to control this infimum, a natural approach is to first control $Z (\theta)$ on a suitable finite $\eps$-covering of $S^{d-1}$ using Assumption~\ref{ass:small-ball}, independence, and a union bound, and then to extend this control to $S^{d-1}$ by approximation.
However, this approach (see \eg \cite[Theorem~5.39]{vershynin2012introduction} for a use of this argument) fails here, since the control of the approximation term would require an exponential upper bound on $\opnorm{\wh \Sigma_n}$,
which does not hold for heavy-tailed distributions.
Instead, as in \cite{oliveira2016covariance}, we use the so-called PAC-Bayesian inequality for empirical processes \cite{mcallester1999pacbayesml,mcallester2003pac,langford2003pac,catoni2007pacbayes,audibert2011robust}, which is based on a variational representation of the relative entropy.
This technique enables one to control a smoothed version of the process $Z (\theta)$, namely
\begin{equation*}
  Z (\rho)
  :=
  \int_{\R^d} Z (\theta) \rho (\di \theta)
  \, ,
\end{equation*}
indexed by probability distributions $\rho$ on $\Theta$.

Specifically, let $\pi$ be a probability distribution on some subset $\Theta \subset \R^d$ containing $S^{d-1}$.
In addition, let $\psi : \R_+^* \to \R$ be a bound on the moment generating function of $- \langle \theta, X\rangle^2$, such that for all $\lambda > 0$ and $\theta \in \Theta$,
\begin{equation*}
  \E \exp \big(- \lambda \langle \theta, X\rangle^2 \big)
  \leq e^{- \psi (\lambda)}, %
  \quad \text{so that} \quad
  \E \exp \big( - \lambda n Z (\theta) - n \psi (\lambda) \big)
  \leq 1
  .
\end{equation*}
The PAC-Bayes variational inequality (see Lemma~\ref{lem:pac-bayes} for a general statement) allows to turn this (pointwise, for every $\theta$) bound on the moment generating function into a uniform bound for the smoothed process:
for every $t > 0$,
\begin{equation*}
  \P \left( \forall \rho, \ 
   - \lambda n \big[ Z (\rho) + \psi (\lambda) \big] 
    \leq %
    \kll{\rho}{\pi} + t
  \right)
  \geq 1 - e^{-t}
  \, ,
\end{equation*}
where $\rho$ spans all distributions over $\Theta$ and $\kll{\rho}{\pi} = \int \log \frac{\di \rho}{\di \pi} \di \rho$ is the relative entropy between $\rho$ and $\pi$.
One then deduce from these inequalities the following decomposition.
To each $\theta \in S^{d-1}$, we associate a smoothing distribution $\rho_\theta$ around $\theta$; then, with probability at least $1 - e^{-t}$, for every $\theta \in S^{d-1}$,
\begin{align*}
  Z (\theta)
  &= Z (\theta) - \int_{\Theta} Z (\theta') \rho_{\theta} (\di \theta') + \int_{\Theta} Z (\theta') \rho_{\theta} (\di \theta') \\
  &\geq \underbrace{Z (\theta) - \int_{\Theta} Z (\theta') \rho_{\theta} (\di \theta')}_{\text{approximation term}} - \underbrace{\frac{\kll{\rho_\theta}{\pi}}{\lambda n}}_{\text{entropy term}} - \frac{\psi (\lambda) + t}{\lambda n}
    \, .
\end{align*}
The proof then involves controlling (i) the %
Laplace transform of the process;
(ii) the approximation term; and (iii) the entropy term.
In order to control the last two, a careful choice of smoothing distribution (and prior) is needed.

\begin{remark}[PAC-Bayes vs.~$\eps$-net argument]
  As indicated above, the use of an $\eps$-net argument would fail here, since it would lead to an approximation term depending on $\opnorm{\wh \Sigma_n}$.
  On the other hand, the use of a smoothing distribution which is ``isotropic'' and centered at a point $\theta$ enables one to obtain an approximation term in terms of $\tr (\wh \Sigma_n) / d
  $, which can be bounded after proper truncation of $X$ (in a way that does not overly degrade Assumption~\ref{ass:small-ball}).
\end{remark}

\begin{remark}[Choice of prior and posteriors: entropy term]
  The PAC-Bayesian technique is classically employed in conjunction with Gaussian prior and smoothing distribution \cite{langford2003pac,audibert2011robust,oliveira2016covariance}.
  This choice is convenient, since both the approximation and entropy term have closed-form expressions (in addition, a Gaussian distribution centered at $\theta$ yields the desired ``isotropic'' approximation term).
  
  However, in order to obtain non-vacuous bounds for small $t$, we need the approximation term (and thus the ``radius'' $\gamma$ of the smoothing distribution) to be small.
  But as $\gamma \to 0$, the entropy term for Gaussian distributions grows rapidly (as $d/\gamma^2$, instead of the $d \log (1/\gamma)$ rate suggested by covering numbers), which ultimately leads to vacuous bounds.
  In order to bypass this difficulty, we employ a more refined choice of prior and smoothing distributions, leading to an optimal entropy term of $d \log (1/\gamma)$.
  In addition,
  symmetry arguments show that
  this choice of smoothing also leads to an ``isotropic'' approximation term controlled by $\tr (\wh \Sigma_n) / d$
  instead of $\opnorm{\wh \Sigma_n}$.
\end{remark}

\paragraph{Formulation in terms of moments.}

The statements of this section on the lower tail of $\lamin (\wh \Sigma_n)$ can equivalently be rephrased in terms of its negative moments.
For $q \geq 1$, we denote $\| Z \|_{L^q} := \E [ |Z|^q ]^{1/q} \in [0, + \infty]$ the $L^q$ norm of a real random variable $Z$.

\begin{corollary}
  \label{cor:negative-moments-smallest}
  Under the assumptions of Theorem~\ref{thm:lower-tail-covariance} and for $n \geq 12/\alpha$, for any $1\leq q \leq \alpha n / 12$, 
  \begin{equation}
    \label{eq:negative-moment-smallest}
    \| \max (1, \lamin (\wh \Sigma_n)^{-1} ) \|_{L^q}
    \leq 2^{1/q} \cdot C'
    \, .
  \end{equation}
  Conversely, the previous inequality implies that $\P (\lamin (\wh \Sigma_n) \leq t) \leq (2 C' t)^{\alpha n /12}$ for all $t \in (0, 1)$.

  Finally, for any random vector $X$ in $\R^d$, $d\geq 2$, such that $\E [X X^\top] = I_d$, we have for any $q \geq n/2$:
  \begin{equation*}
    \| \lamin (\wh \Sigma_n)^{-1} \|_{L^q} = + \infty
    \, .
  \end{equation*}
\end{corollary}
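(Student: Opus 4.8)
The plan is to obtain all three statements by passing between tail bounds and moments via the layer-cake formula and Markov's inequality; no new probabilistic input beyond Theorem~\ref{thm:lower-tail-covariance} and Proposition~\ref{prop:small-ball-lower} is needed.

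For the moment bound, write $M := \max(1, \lamin(\wh\Sigma_n)^{-1}) \geq 1$, so that $\E[M^q] = \int_0^\infty q u^{q-1}\,\P(M > u)\,\di u$. For $u \leq C'$ I would simply use $\P(M > u) \leq 1$, which contributes $(C')^q$. For $u > C'$, the event $\{M > u\}$ equals $\{\lamin(\wh\Sigma_n) < 1/u\}$ with $1/u < 1/C' < 1$, so Theorem~\ref{thm:lower-tail-covariance} gives $\P(M > u) \leq (C'/u)^{\alpha n/6}$; since $q \leq \alpha n/12$, the exponent $a := \alpha n/6 - q$ satisfies $a \geq \alpha n/12 \geq q > 0$, so $\int_{C'}^\infty q u^{q-1-\alpha n/6}\,\di u$ converges and equals $\frac{q}{a}(C')^{q} \leq (C')^q$. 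Adding the two ranges gives $\E[M^q] \leq 2(C')^q$, i.e.\ $\|M\|_{L^q} \leq 2^{1/q}C'$ (using $C' = 3C^4 e^{1+9/\alpha} \geq 1$).

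For the converse, take $q = \alpha n/12$, which is $\geq 1$ precisely because $n \geq 12/\alpha$. For $t \in (0,1)$ one has $1/t > 1$, hence $\{\lamin(\wh\Sigma_n) \leq t\} = \{\lamin(\wh\Sigma_n)^{-1} \geq 1/t\} = \{M \geq 1/t\}$, and Markov's inequality applied to $M^q$ together with the bound just proved yields $\P(\lamin(\wh\Sigma_n) \leq t) \leq t^q\,\E[M^q] \leq 2(C't)^{\alpha n/12}$, a tail bound of the same form as Theorem~\ref{thm:lower-tail-covariance}. For the final (lower) statement, the hypotheses $d \geq 2$ and $\E[XX^\top] = I_d$ are exactly those of Proposition~\ref{prop:small-ball-lower}, which gives $\P(\lamin(\wh\Sigma_n) \leq t) \geq (0.025\,t)^{n/2}$ for all $t \leq 1$, and hence $\P(\lamin(\wh\Sigma_n) < t) \geq (0.025\,t)^{n/2}$ as well (taking the increasing limit over $t' \uparrow t$). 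Then layer cake gives $\E[\lamin(\wh\Sigma_n)^{-q}] = \int_0^\infty q u^{q-1}\,\P(\lamin(\wh\Sigma_n) < 1/u)\,\di u \geq q\,(0.025)^{n/2}\int_1^\infty u^{q-1-n/2}\,\di u$, and the last integral is infinite as soon as $q - 1 - n/2 \geq -1$, i.e.\ $q \geq n/2$, so $\|\lamin(\wh\Sigma_n)^{-1}\|_{L^q} = +\infty$ in that range.

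The only delicate point is bookkeeping rather than a genuine obstacle: one must keep the admissible exponent ranges straight, namely $q < \alpha n/6$ for the upper-tail moment integral to converge and $q \leq \alpha n/12$ for the ensuing constant $q/(\alpha n/6 - q)$ to remain $\leq 1$, against $q \geq n/2$ for divergence in the universal lower bound; these ranges are compatible (one needs $\alpha n/12 < n/2$, which holds since $\alpha \leq 1$), and everything else is a direct integration of estimates already established.
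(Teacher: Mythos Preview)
Your proof is correct and follows essentially the same route as the paper: the paper isolates the tail-to-moment and moment-to-tail conversions into an abstract lemma (Lemma~\ref{lem:negative-moments}) about a nonnegative variable $Z$ and then applies it with $Z = \lamin(\wh\Sigma_n)$, while you carry out the identical layer-cake and Markov computations inline. The only cosmetic difference is the form of the layer-cake integral (the paper writes $\int_0^\infty \P(Z \leq u^{-1/q})\,\di u$ where you write $\int_0^\infty q u^{q-1}\P(M>u)\,\di u$), which is the same thing after a change of variables.
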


The proof of Corollary~\ref{cor:negative-moments-smallest} is provided in Section~\ref{sec:proof-corollary-moments}.

\subsection{The small-ball condition for independent covariates}
\label{sec:small-ball-indep}

We now discuss conditions under which Assumption~\ref{ass:small-ball} holds in the case of independent coordinates.
In this section, we assume that the coordinates $X^j$, $1 \leq j \leq d$, of $X = \wt X$ are independent and centered.
Note that the condition $\E [ X X^\top] = I_d$ means that the $X^j$ have unit variance.

Let us introduce the \emph{Lévy concentration function} $Q_Z : \R^+ \to [0, 1]$ of a real random variable $Z$ defined by, for $t \geq 0$,
\begin{equation*}
  Q_Z (t) := \sup_{a \in \R} \P (|Z - a| \leq t)
  \, .
\end{equation*}
Anti-concentration (or small ball) estimates \cite{nguyen2013small} refer to nonvacuous upper bounds on this function.
Here, in order to establish Assumption~\ref{ass:small-ball}, it suffices to show that $Q_{\langle \theta, X\rangle} (t ) \leq (C t)^\alpha$ for all $t > 0$ and $\theta \in S^{d-1}$.
This amounts to establishing anti-concentration of linear combinations of independent variables $\langle \theta, X\rangle = \sum_{j=1}^d \theta^j X^j$,  uniformly over $\theta \in S^{d-1}$, namely to provide upper bounds on:
\begin{equation*}
  Q_X (t) := \sup_{\theta \in S^{d-1}} Q_{\langle \theta, X\rangle} (t)
  \, .
\end{equation*}
Small-ball probabilities naturally appear in the study of the smallest singular value of a random matrix (see \cite{rudelson2010nonasymptotic}).
\cite{tao2009littlewood,tao2009inverse,rudelson2008littlewood,rudelson2009smallest} studied anti-concentration for variables of the form $\langle \theta, X\rangle$, and deduced estimates of the smallest singular value of random matrices.
These bounds are however slightly different from the one we need:
indeed, they hold for ``unstructured'' vectors $\theta$ (which do not have additive structure, see \cite{rudelson2010nonasymptotic}), rather than uniformly over $\theta \in S^{d-1}$.
Here, in order to show that Assumption~\ref{ass:small-ball} holds, we need %
bounds over $Q_X$, which requires some assumption on the distribution of the coordinates $X^j$.

Clearly, $Q_X \geq \max_{1 \leq j \leq d} Q_{X^j}$, and in particular the coordinates $X^j$ themselves must be anti-concentrated.
Remarkably, a result of \cite{rudelson2014small} (building on a reduction by \cite{rogozin1987convolution} to uniform variables)
shows that, if the $X^j$ have bounded densities, a reverse inequality holds:

\begin{proposition}[\cite{rudelson2014small}, Theorem~1.2]
  \label{prop:rudelson-vershynin-small}
  Assume that $X^1, \dots, X^d$ are independent and have density bounded by $C_0 > 0$.
  Then, for every $\theta \in S^{d-1}$, $\sum_{j=1}^d \theta^j X^j$ has density bounded by $\sqrt{2} \, C_0$.
  In other words, $Q_X (t) \leq 2 \sqrt{2} \,  C_0 t$ for every $t > 0$, \ie, Assumption~\ref{ass:small-ball} holds with $\alpha = 1$ and $C = 2 \sqrt{2} \, C_0$.
\end{proposition}

Equivalently, if $\max_{1 \leq j \leq d} Q_{X^j} (t) \leq C t$ for all $t > 0$, then $Q_X (t) \leq \sqrt{2} C t$ for all $t > 0$, and the constant $\sqrt{2}$ is optimal \cite{rudelson2014small}.
Whether a general bound of $Q_X$ in terms of $\max_{1 \leq j \leq d} Q_{X_j}$ holds is unclear (for instance, the inequality $Q_X \leq \sqrt{2} \max_{1 \leq j \leq d} Q_{X_j}$ does not hold, as shown by considering $X^1, X^2$ independent Bernoulli $1/2$ variables, and $\theta = (1/\sqrt{2}, 1/\sqrt{2})$: then $Q_{X^j} (3/8) = 1/2$ but $Q_{\langle \theta, X\rangle} (3/8) = 3/4$).
While independence gives
\begin{equation*}
  Q_{\langle \theta, X\rangle} (t) \leq \min_{1 \leq j \leq d} Q_{\theta^j X^j} (t) = \min_{1 \leq j \leq d} Q_{X^j} (t/|\theta^j|) \leq \max_{1 \leq j \leq d} Q_{X^j} (\sqrt{d} \cdot t)
  \, ,
\end{equation*}
this bound features an undesirable dependence on the dimension $d$.

Another way to express the ``non-atomicity'' of the distributions of coordinates $X^j$, which is stable through linear combinations of independent variables, is the rate of decay of their Fourier transform.
Indeed, if $X^j$ is atomic, then its characteristic function does not vanish at infinity.
Proposition~\ref{prop:uniform-small-ball} below (proved in Section~\ref{sec:proof-proposition-independent}), which follows from an inequality by Esséen, provides uniform anti-concentration for one-dimensional marginals $\langle \theta, X\rangle$ in terms of the Fourier transform of the $X^j$, establishing Assumption~\ref{ass:small-ball} beyond bounded densities.
We let $\Phi_Z$ be the characteristic function of a real random variable $Z$, defined by $\Phi_Z (\xi) = \E [ e^{i \xi Z} ]$ for $\xi \in \R$.

\begin{proposition}
  \label{prop:uniform-small-ball}
  Assume that $X^1, \dots, X^d$ are independent and that there are constants $C_0 > 0$ and $\alpha \in (0, 1)$ such that, for $1 \leq j \leq d$ and $\xi \in \R$,
  \begin{equation}
    \label{eq:bound-characteristic}
    | \Phi_{X^j} (\xi) |
    \leq (1 + |\xi|/C_0)^{-\alpha}
    \, .
  \end{equation}
  Then, $X = (X^1, \dots, X^d)$ satisfies Assumption~\ref{ass:small-ball} with
  $C = 2^{1/\alpha} (2 \pi)^{1/\alpha - 1} (1 - \alpha)^{-1/\alpha} C_0$.
\end{proposition}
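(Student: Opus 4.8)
The plan is to use Esséen's concentration inequality, which bounds the Lévy concentration function $Q_Z(t)$ of a real random variable $Z$ in terms of an integral of $|\Phi_Z(\xi)|$ over a bounded frequency range. Precisely, Esséen's inequality states that there is a universal constant (one may take $\tfrac{1}{2\pi}$ up to an absolute factor, but I will use the standard sharp form) such that for any $t>0$,
\begin{equation*}
  Q_Z(t) \leq \frac{C_1}{t} \int_{-t}^{t} |\Phi_Z(\xi)| \, \di \xi \, ,
\end{equation*}
or, after rescaling the integration variable, $Q_Z(t) \leq C_1 \int_{-1}^{1} |\Phi_Z(u/t)| \, \di u$. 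Applying this with $Z = \langle \theta, X\rangle$ and using independence of the coordinates, $\Phi_{\langle \theta, X\rangle}(\xi) = \prod_{j=1}^d \Phi_{X^j}(\theta^j \xi)$, so that $|\Phi_{\langle \theta, X\rangle}(\xi)| \leq \prod_{j=1}^d (1 + |\theta^j \xi|/C_0)^{-\alpha}$.

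The first key step is to bound this product from above by a single factor, uniformly over $\theta \in S^{d-1}$. Since $\sum_j (\theta^j)^2 = 1$, at least one coordinate satisfies $|\theta^j| \geq 1/\sqrt d$; more usefully, one can avoid the dimension dependence by a convexity/AM–GM type argument: since $\prod_j (1 + a_j)^{-\alpha} \leq (1 + \max_j a_j)^{-\alpha}$ is too weak, instead I would use that $\prod_j (1+|\theta^j\xi|/C_0)^{\alpha} \geq 1 + \sum_j |\theta^j\xi|/C_0 \cdot (\text{something})$ — actually the clean route is: drop all factors but keep enough of them. The cleanest bound that is dimension-free comes from noting $\prod_{j}(1+|\theta^j\xi|/C_0) \geq \max_j (1 + |\theta^j\xi|/C_0)$ and then, since we only need anti-concentration and $\|\theta\|_2 = 1$ forces $\|\theta\|_\infty$ to not be too small only in the worst case, the honest dimension-free estimate uses $\prod_j (1 + |\theta^j \xi|/C_0) \geq 1 + \|\theta\|_1 |\xi|/C_0 \geq 1 + |\xi|/C_0$ — wait, that uses $\|\theta\|_1 \geq \|\theta\|_2 = 1$. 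That is the right move: by $\prod (1+a_j) \geq 1 + \sum a_j$ for $a_j \geq 0$, and $\sum_j |\theta^j| = \|\theta\|_1 \geq \|\theta\|_2 = 1$, we get $|\Phi_{\langle\theta,X\rangle}(\xi)| \leq (1 + |\xi|/C_0)^{-\alpha}$ for every $\theta \in S^{d-1}$. So a one-dimensional marginal obeys the same Fourier bound as each coordinate, uniformly.

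The second step is the integral computation. Feeding this into Esséen's inequality,
\begin{equation*}
  Q_{\langle\theta,X\rangle}(t)
  \leq \frac{C_1}{t} \int_{-t}^{t} \frac{\di \xi}{(1+|\xi|/C_0)^{\alpha}}
  \leq \frac{2 C_1}{t} \int_{0}^{t} (|\xi|/C_0)^{-\alpha}\, \di\xi
  = \frac{2 C_1 C_0^{\alpha}}{1-\alpha} \cdot t^{-\alpha} \cdot \frac{t^{1-\alpha}}{?}
\end{equation*}
— more carefully, $\int_0^t (\xi/C_0)^{-\alpha}\di\xi = C_0^\alpha t^{1-\alpha}/(1-\alpha)$, so $Q_{\langle\theta,X\rangle}(t) \leq 2 C_1 C_0^\alpha t^{-\alpha} \cdot C_0^\alpha t^{1-\alpha}/(1-\alpha)$ — this does not have the right homogeneity, so the correct Esséen normalization must be $Q_Z(t) \leq C_1 \int_{-1/t}^{1/t} |\Phi_Z(\xi)|\,\di\xi \cdot t$, i.e. one integrates $|\Phi_Z|$ over $|\xi| \leq 1/t$ and multiplies by $t$ — wait, dimensionally $Q_Z(t) \leq C_1 t \int_{|\xi|\leq 1/t} |\Phi_Z(\xi)|\,\di\xi$ is the form I should use. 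Then
\begin{equation*}
  Q_{\langle\theta,X\rangle}(t) \leq C_1 t \int_{-1/t}^{1/t} \frac{\di\xi}{(1+|\xi|/C_0)^\alpha}
  = 2 C_1 t \int_0^{1/t} \frac{\di\xi}{(1+\xi/C_0)^\alpha}
  \leq 2 C_1 t \cdot \frac{C_0}{1-\alpha}(1 + 1/(C_0 t))^{1-\alpha} \, ,
\end{equation*}
and for $t$ small this behaves like $\tfrac{2C_1}{1-\alpha} C_0^\alpha t^\alpha$, giving exactly the claimed form $Q_X(t) \leq (Ct)^\alpha$ with $C$ of order $C_0 (1-\alpha)^{-1/\alpha}$ times absolute constants, matching the stated $C = 2^{1/\alpha}(2\pi)^{1/\alpha-1}(1-\alpha)^{-1/\alpha}C_0$ once one tracks the sharp Esséen constant $C_1 = 1/(2\pi)$ (and one checks the bound is trivially true, $\leq 1$, when $t$ is not small, so the estimate can be stated for all $t>0$). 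The last step is simply to invoke the equivalence noted after Assumption~\ref{ass:small-ball}: $Q_{\langle\theta,X\rangle}(t) = \sup_a \P(|\langle\theta,X\rangle - a|\leq t) \geq \P(|\langle\theta,X\rangle|\leq t)$, and since $\Sigma = I_d$ here, $\|\theta\|_\Sigma = \|\theta\|$, so the bound $\sup_{\theta\in S^{d-1}} Q_{\langle\theta,X\rangle}(t) \leq (Ct)^\alpha$ is exactly Assumption~\ref{ass:small-ball}.

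The main obstacle is getting the constant right and, relatedly, choosing the correct normalization of Esséen's inequality and verifying the dimension-free reduction $\|\theta\|_1 \geq \|\theta\|_2$ is genuinely what is needed (one must double-check that $\prod_j(1+a_j) \geq 1 + \sum_j a_j$ is applied with the $a_j \geq 0$, which holds, and that no factor of $\sqrt d$ sneaks back in). The Fourier-bound-is-preserved-under-marginals step is the conceptual heart; the rest is a routine computation of $\int_0^{1/t}(1+\xi/C_0)^{-\alpha}\di\xi$ and bookkeeping of absolute constants to land on the stated $C$.
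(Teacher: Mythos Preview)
Your approach is correct and follows the same three-step skeleton as the paper: (i) show that $|\Phi_{\langle\theta,X\rangle}(\xi)| \leq (1+|\xi|/C_0)^{-\alpha}$ uniformly over $\theta\in S^{d-1}$, (ii) apply Ess\'een's inequality, and (iii) compute the resulting integral. The one genuine difference is in step~(i). You use the elementary inequality $\prod_j(1+a_j)\geq 1+\sum_j a_j$ together with $\|\theta\|_1 \geq \|\theta\|_2 = 1$, which is direct and clean for this specific bound. The paper instead isolates a general lemma (Lemma~\ref{lem:uniform-small-ball-general}): writing $|\Phi_{X^j}(\xi)|\leq \exp(-g(\xi^2))$ with $g(u)=\alpha\log(1+\sqrt{u}/C_0)$, it checks that $g$ is sub-additive (as the composition of the increasing sub-additive $u\mapsto\alpha\log(1+u)$ with the sub-additive $u\mapsto\sqrt{u}/C_0$), so that $\sum_j g(\theta_j^2\xi^2)\geq g(\|\theta\|_2^2\,\xi^2)=g(\xi^2)$. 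Both routes yield the identical inequality; yours is shorter here, while the paper's formulation applies to any decay profile $\exp(-g(\xi^2))$ with $g$ sub-additive. For the constant, the paper uses the Ess\'een form $Q_Z(t)\leq t\int_{-2\pi/t}^{2\pi/t}|\Phi_Z(\xi)|\,\di\xi$ and the crude bound $(1+|\xi|/C_0)^{-\alpha}\leq (C_0/|\xi|)^{\alpha}$, which gives $Q_X(t)\leq \tfrac{2(2\pi)^{1-\alpha}}{1-\alpha}C_0^\alpha\, t^\alpha$ and hence the stated $C$ without further case analysis in $t$.
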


\section{Proof of Theorem~\ref{thm:lower-tail-covariance}}
\label{sec:proof-lower-tail}

\subsection{Truncation and small-ball condition}
\label{sec:trunc-small-ball}

The first step of the proof is to replace $X$ by the truncated vector $X' := \big( 1 \wedge \frac{\sqrt{d}}{\| X \|} \big) X$;
likewise, let $X_i' = \big( 1 \wedge \frac{\sqrt{d}}{\| X_i \|} \big) X_i$ for $1 \leq i \leq n$, and $\wh \Sigma_n' := n^{-1} \sum_{i=1}^n X_i' (X_i')^\top$.
Note that $X' (X')^\top %
\mleq X X^\top$ and $\| X' \| = \sqrt{d} \wedge \| X \|$, so that $\wh \Sigma_n' \mleq \wh \Sigma_n$ and $\E [ \| X' \|^2 ] \leq \E [ \| X \|^2] = d$.
It follows that $\lamin (\wh \Sigma_n') \leq \lamin (\wh \Sigma_n)$, hence it suffices to establish a lower bound for $\lamin (\wh \Sigma_n')$.

In addition, for every $\theta \in S^{d-1}$, $t \in (0, C^{-1})$ and $a \geq 1$,
\begin{align}  
  \P ( | \langle X', \theta \rangle | \leq t )
    &\leq \P \left( | \langle X, \theta \rangle | \leq a t \right) + \P \bigg( \frac{\sqrt{d}}{\| X \|} \leq \frac{1}{a} \bigg) \nonumber \\
    &\leq (C a t)^\alpha + \P ( \| X \| \geq a \sqrt{d}) \nonumber \\
    &\leq (C a t)^\alpha + \frac{\E [ \| X \|^2]}{a^2 d} \label{eq:truncation-small-ball-markov} \\
    &= (C t)^\alpha a^\alpha + \frac{1}{a^2}
      \label{eq:truncation-small-ball-1}
\end{align}
where we applied Markov's inequality in~\eqref{eq:truncation-small-ball-markov}.
In particular, letting $a = (C t)^{- \alpha / (2+\alpha)} $, inequality~\eqref{eq:truncation-small-ball-1} becomes
\begin{equation}
  \label{eq:truncation-small-ball-final}
  \P ( | \langle X', \theta \rangle | \leq t )
  \leq 2 (C t)^{2 \alpha / (2 + \alpha)}
  \, .
\end{equation}

\subsection{Concentration and PAC-Bayesian inequalities}
\label{sec:pac-bayes-ineq}

The smallest eigenvalue $\lamin (\wh \Sigma_n')$ of $\wh \Sigma_n'$ may be written as the infimum of an empirical process indexed by the unit sphere $S^{d-1} = \{ v \in \R^d : \| v \| = 1 \}$:
\begin{equation*}
  \lamin (\wh \Sigma_n')
  = \inf_{v \in S^{d-1}} \langle \wh \Sigma_n' v, v \rangle
  = \inf_{v \in S^{d-1}} \frac{1}{n} \sum_{i=1}^n \langle X_i', v\rangle^2
  \, .
\end{equation*}
Now, recall that the variables $\langle X_i', \theta \rangle^2$ are \iid and distributed as $\langle X', \theta \rangle^2$ for every $\theta \in S^{d-1}$.
The inequality~\eqref{eq:truncation-small-ball-final} on the left tail of this variable 
can be expressed in terms of its Laplace transform, through the following lemma:

\begin{lemma}
  \label{lem:small-ball-laplace}
  Let $Z$ be a nonnegative random variable.
  Assume that there exists $\alpha \in (0, 1 ]$ and $C > 0$ such that, for every $t \geq 0$,
  $\P (Z \leq t) \leq (C t)^{\alpha}$.
  Then, for every $\lambda > 0$,
  \begin{equation}
    \label{eq:small-ball-laplace}
    \E [ \exp (- \lambda Z) ]
    \leq (C / \lambda)^\alpha
    \, .
  \end{equation}
\end{lemma}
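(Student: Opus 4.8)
The plan is to use the standard layer-cake (tail integration) representation of $\E[\exp(-\lambda Z)]$ together with the hypothesis $\P(Z \leq t) \leq (Ct)^\alpha$. First I would write, since $0 \leq \exp(-\lambda Z) \leq 1$,
\begin{equation*}
  \E[\exp(-\lambda Z)]
  = \int_0^1 \P\big(\exp(-\lambda Z) > s\big)\,\di s
  = \int_0^1 \P\Big(Z < \frac{1}{\lambda}\log(1/s)\Big)\,\di s
  \, .
\end{equation*}
Then, substituting $u = \frac{1}{\lambda}\log(1/s)$ (so $s = e^{-\lambda u}$ and $\di s = -\lambda e^{-\lambda u}\,\di u$, with $u$ ranging over $(0,+\infty)$ as $s$ ranges over $(0,1)$), the right-hand side becomes $\int_0^\infty \P(Z < u)\,\lambda e^{-\lambda u}\,\di u$.

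Next I would bound $\P(Z < u) \leq \P(Z \leq u) \leq (Cu)^\alpha$ using the hypothesis, giving
\begin{equation*}
  \E[\exp(-\lambda Z)]
  \leq \lambda \int_0^\infty (Cu)^\alpha e^{-\lambda u}\,\di u
  = \lambda C^\alpha \lambda^{-(\alpha+1)} \int_0^\infty v^\alpha e^{-v}\,\di v
  = (C/\lambda)^\alpha \,\Gamma(\alpha+1)
  \, ,
\end{equation*}
where in the middle step I substituted $v = \lambda u$. To finish, it remains to observe that $\Gamma(\alpha+1) \leq 1$ for $\alpha \in (0,1]$: indeed $\Gamma$ is convex on $(0,\infty)$ with $\Gamma(1) = \Gamma(2) = 1$, so $\Gamma(\alpha+1) \leq 1$ on the interval $[1,2]$, hence in particular for $\alpha+1 \in (1,2]$. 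Combining yields the claimed bound $\E[\exp(-\lambda Z)] \leq (C/\lambda)^\alpha$.

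I don't anticipate a real obstacle here — the only mild subtlety is justifying the Fubini/substitution step and making sure the tail-integration identity is applied to the bounded nonnegative variable $\exp(-\lambda Z)$ rather than to $Z$ itself (for which no upper tail control is assumed); writing it as $\int_0^1 \P(\exp(-\lambda Z) > s)\,\di s$ sidesteps any integrability concern since the integrand is bounded by $1$ on a bounded interval. The $\Gamma(\alpha+1) \leq 1$ fact is the one place where the restriction $\alpha \leq 1$ is genuinely used, and it is worth stating explicitly.
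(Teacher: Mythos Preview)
Your proof is correct and follows essentially the same approach as the paper: both start from the layer-cake representation $\E[\exp(-\lambda Z)] = \int_0^1 \P(Z \leq \lambda^{-1}\log(1/s))\,\di s$, apply the small-ball hypothesis, and then bound the resulting integral $\int_0^1 (\log(1/s))^\alpha\,\di s = \Gamma(\alpha+1)$ by $1$. The only cosmetic difference is that you recognize this integral as $\Gamma(\alpha+1)$ and invoke convexity of $\Gamma$, whereas the paper bounds it directly via the pointwise inequality $u^\alpha \leq \alpha u + (1-\alpha)$ (which follows from convexity of $\alpha \mapsto u^\alpha$) integrated against $u = \log(1/s)$.
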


\begin{proof}[Proof of Lemma~\ref{lem:small-ball-laplace}]
  Since $0 \leq \exp (- \lambda Z) \leq 1$, we have
  \begin{align*}
    \E [ e^{- \lambda Z} ]
    = \int_0^1 \P (e^{- \lambda Z} \geq t) \di t %
    = \int_0^1 \P \bigg( Z \leq \frac{\log (1/t)}{\lambda} \bigg) \di t 
    \leq \int_0^1 \bigg( C \frac{\log (1/t)}{\lambda} \bigg)^\alpha \di t
    .
  \end{align*}
  Now, for $u > 0$, the map $\alpha \mapsto u^\alpha = e^{\alpha \log u}$ is convex on $\R$, so that $u^\alpha \leq \alpha u + (1 - \alpha)$ for $0 \leq \alpha \leq 1$.
  It follows that
  \begin{equation*}
    \int_0^1 \log^\alpha (1/t) \di t
    \leq \alpha \int_0^1 (- \log t) \di t + (1 - \alpha)
    = \alpha \big[ - t \log t + t \big]_0^1 + (1 - \alpha)
    = 1 ,
  \end{equation*}
  which establishes inequality~\eqref{eq:small-ball-laplace}.
\end{proof}

Here, inequality~\eqref{eq:truncation-small-ball-final} implies that, for every $\theta \in S^{d-1}$,
\begin{equation*}
  \P (\langle X', \theta \rangle^2 \leq t)
  = \P ( | \langle X', \theta \rangle | \leq \sqrt{t})
  \leq 2 (C \sqrt{t})^{2 \alpha / (2+\alpha)}
  = 2 (C^2 t)^{\alpha / (2 + \alpha)}
  \, .
\end{equation*}
Hence, Lemma~\ref{lem:small-ball-laplace} with $Z = \langle X', \theta \rangle^2$ implies that, for every $\lambda > 0$,
\begin{equation*}
  \E [ \exp ( - \lambda \langle X', \theta\rangle^2) ]
  \leq 2 (C^2 / \lambda)^{\alpha / (2+\alpha)}
  \, .
\end{equation*}
In other words, for $i = 1, \dots, n$, $\E [ \exp (Z_i (\theta)) ] \leq 1$, where, letting $\alpha' = \alpha / (2+\alpha)$, we define
\begin{equation*}
  Z_i (\theta) = - \lambda \langle X_i', \theta \rangle^2 + \alpha' \log \left( \frac{\lambda}{C^2} \right) - \log 2
\end{equation*}
with $\lambda > 0$ a fixed parameter that will be optimized later.
In particular, letting
\begin{equation*}
  Z (\theta) = Z_1 (\theta) + \dots + Z_n (\theta)
  = n \left[ - \lambda \langle \wh \Sigma_n' \theta, \theta \rangle + \alpha' \log \left( \frac{\lambda}{C^2} \right) - \log 2 \right]
  \, ,
\end{equation*}
the independence of $Z_1 (\theta), \dots, Z_n (\theta)$ implies that, for every $\theta \in S^{d-1}$,
\begin{equation}
  \label{eq:laplace-Z-sum}
  \E [ \exp ( Z (\theta) ) ]
  = \E [ \exp ( Z_1 (\theta) ) ] \cdots \E [ \exp ( Z_n (\theta) ) ]
  \leq 1
  \, .
\end{equation}
The bound~\eqref{eq:laplace-Z-sum} controls the upper tail of $Z (\theta)$ for fixed $\theta \in \Theta$.
In order to obtain a uniform control over $\theta$, similarly to~\cite{audibert2011robust,oliveira2016covariance} we will use the PAC-Bayesian technique for bounding empirical processes~\cite{mcallester1999pacbayesml,mcallester2003pac,catoni2007pacbayes}.
For completeness, we include a proof of Lemma~\ref{lem:pac-bayes}
(which is standard) below.

\begin{lemma}[PAC-Bayesian deviation bound]
  \label{lem:pac-bayes}
  Let $\Theta$ be a measurable space, and $Z (\theta)$, $\theta \in \Theta$, be a real-valued measurable process.
  Assume that $\E [\exp Z (\theta)] \leq 1$ for every $\theta \in \Theta$.
  Let $\pi$ be a probability distribution on $\Theta$.
  Then,
  \begin{equation}
    \label{eq:pac-bayes}
    \P \left( \forall \rho,
      \int_\Theta Z (\theta) \rho (\di \theta)
      \leq %
      \kll{\rho}{\pi} + t
    \right)
    \geq 1 - e^{-t}
    \, ,
  \end{equation}
  where $\rho$ spans all probability measures on $\Theta$, and
  $\kll{\rho}{\pi} := \int_\Theta \log \big( \frac{\di \rho}{\di \pi} \big) \di \rho \in [0, +\infty]$ is the Kullback-Leibler divergence between $\rho$ and $\pi$, and where we define the integral in~\eqref{eq:pac-bayes} to be $- \infty$ when the negative part is not integrable.
\end{lemma}

\begin{proof}[Proof of Lemma~\ref{lem:pac-bayes}]
  By integrating the inequality $\E [ \exp Z (\theta) ] \leq 1$ with respect to $\pi$ and using the Fubini-Tonelli theorem, we obtain
  \begin{equation}
    \label{eq:proof-pac-bayes-1}
    \E \bigg[ \int_\Theta \exp Z (\theta) \pi (\di \theta) \bigg]
    \leq 1
    \, .
  \end{equation}
  In addition, using the duality between the $\log$-Laplace transform and the Kullback-Leibler divergence %
  (see, e.g.,~\cite[p.~159]{catoni2004statistical}):
  \begin{equation*}
    \log \int_\Theta \exp (Z (\theta)) \pi (\di \theta)
    = \sup_\rho \bigg\{ \int_\Theta Z (\theta) \rho (\di \theta) - \kll{\rho}{\pi} \bigg\}
  \end{equation*}
  where the supremum spans over all probability distributions $\rho$ over $\Theta$, the inequality~\eqref{eq:proof-pac-bayes-1} writes
  \begin{equation}
    \label{eq:proof-pac-bayes-2}
    \E \left[ \exp \sup_{\rho} \left\{ \int_\Theta Z (\theta) \rho (\di \theta) - \kll{\rho}{\pi} \right\} \right]
    \leq 1 \, .
  \end{equation}
  Applying Markov's inequality to~\eqref{eq:proof-pac-bayes-2} yields the desired bound~\eqref{eq:pac-bayes}.
\end{proof}

Here, we let $\Theta = S^{d-1}$ and $Z (\theta)$ as defined above.
In addition, we take $\pi$ to be the uniform distribution on $S^{d-1}$, and for $v \in S^{d-1}$ and $\gamma > 0$ we define $\Theta (v, \gamma) := \{ \theta \in S^{d-1} : \| \theta - v \| \leq \gamma \}$ and let $\pi_{v, \gamma} = \pi (\Theta (v, \gamma))^{-1} \indic{\Theta (v, \gamma)} \cdot \pi$ be the uniform distribution over $\Theta (v, \gamma)$.
In this case, the PAC-Bayesian bound of Lemma~\ref{lem:pac-bayes} writes:
for every $t > 0$, with probability at least $1-e^{-t}$, for every $v \in S^{d-1}$ and $\gamma > 0$, 
\begin{equation}
  \label{eq:pac-bayes-small-ball}
  n \left[ - \lambda F_{v, \gamma} (\wh \Sigma_n') + \alpha' \log \left( \frac{\lambda}{C^2} \right) - \log 2 \right]
  \leq \kll{\pi_{v, \gamma}}{\pi} + t
  \, ,
\end{equation}
where we define for every symmetric matrix $\Sigma$:
\begin{equation}
  \label{eq:def-average-quadratic-posterior}
  F_{v, \gamma} (\Sigma)
  := \int_{\Theta} \langle \Sigma \theta, \theta \rangle \pi_{v, \gamma} (\di \theta)
  \, .
\end{equation}

\subsection{Control of the approximation term}
\label{sec:contr-appr-term}

Now, using the symmetries of the smoothing distributions $\pi_{v, \gamma}$, we will show that, for every $\gamma > 0$, $v \in S^{d-1}$ and symmetric matrix $\Sigma$, 
\begin{equation}
  \label{eq:approximation-term-expression}
  F_{v, \gamma} (\Sigma)
  = \big( 1 - \phi (\gamma) \big) \langle \Sigma v, v \rangle
  + \phi (\gamma) \cdot \frac{1}{d} \tr (\Sigma)
  \, ,
\end{equation}
where for $\gamma > 0$,
\begin{equation}
  \label{eq:approximation-term-phi}
  \phi (\gamma)
  := \frac{d}{d-1} \int_{\Theta} \big( 1 - \langle \theta, v\rangle^2 \big) \pi_{v, \gamma} (\di \theta) \in [ 0, d / (d-1) \gamma^2 ]
  \, .
\end{equation}

First, note that
\begin{equation*}
  F_{v, \gamma} (\Sigma)
  = \tr (\Sigma A_{v, \gamma}) \, ,
  \qquad \mbox{where}
  \qquad
  A_{v, \gamma} := \int_{\Theta} \theta \theta^\top \pi_{v, \gamma} (\di \theta)
  \, .
\end{equation*}
In addition, for every isometry $U \in O (d)$ of $\R^d$ and $v \in S^{d-1}$, $\gamma > 0$, the image measure $U_* \pi_{v, \gamma}$ of $\pi_{v, \gamma}$ under $U$ is $\pi_{U v, \gamma}$ (since $U$ sends $\Theta(v, \gamma)$ to $\Theta (Uv, \gamma)$ and preserves the uniform distribution $\pi$ on $S^{d-1}$).
It follows that
\begin{equation}
  \label{eq:approx-conjug-rotation}
  U A_{v, \gamma} U^{-1}
  = \int_{\Theta} (U \theta) (U \theta)^\top \pi_{v, \gamma} (\di \theta)
  = \int_{\Theta} \theta \theta^\top \pi_{U v, \gamma} (\di \theta)
  = A_{U v, \gamma}
  \, .
\end{equation}
In particular, $A_{v, \gamma}$ commutes with every isometry $U \in O (d)$ such that $U v = v$.
Taking $U$ to be the orthogonal reflection with respect to $H_v := (\R v)^\perp$, $A_{v, \gamma}$ preserves $\ker (U - I_d) = \R v$ and is therefore of the form $\phi_1 (v, \gamma) v v^\top + C_{v, \gamma}$ where $\phi_1 (v, \gamma) \in \R$ and $C_{v, \gamma}$ is a symmetric operator with $C_{v, \gamma} H_v \subset H_v$ and $C_{v, \gamma} v = v$.
Next, taking $U = v v^\top + U_v$ where $U_v$ is an arbitrary isometry of $H_v$, it follows that $C_{v, \gamma}$ commutes on $H_v$ with all isometries $U_v$, and is therefore of the form $\phi_2 (v, \gamma) P_v$, where $P_v = I_d - v v^\top$ is the orthogonal projection on $H_v$ and $\phi_2 (v, \gamma) \in \R$.
To summarize, we have:
\begin{equation*}
  A_{v, \gamma} = \phi_1 (v, \gamma) v v^\top + \phi_2 (v, \gamma) (I_d - v v^\top)
  \, .
\end{equation*}
Now, the identity~\eqref{eq:approx-conjug-rotation} shows that, for every $U \in O (d)$ and $v, \gamma$, $\phi_1 (U v, \gamma) = \phi_1 (v, \gamma)$ and $\phi_2 (U v, \gamma) = \phi_2 (v, \gamma)$; hence, these constants do not depend on $v$ and are simply denoted $\phi_1 (\gamma), \phi_2 (\gamma)$.
Defining $\phi (\gamma) := d \cdot \phi_2 (\gamma)$ and $\wt \phi (\gamma) := \phi_1 (\gamma)  - \phi_2 (\gamma)$, we therefore have:
\begin{equation}
  \label{eq:approx-A-decomp-phi}
  A_{v, \gamma} = \wt \phi (\gamma) v v^\top + \phi (\gamma) \cdot \frac{1}{d} I_d
  \, .
\end{equation}

Next, observe that
\begin{equation}
  \label{eq:approx-mixture-rotation}
  \int_{S^{d-1}} \pi_{v, \gamma} \pi (\di v)
  = \pi \, ;
\end{equation}
this follows from the fact that the measure $\pi'$ on the left-hand side of~\eqref{eq:approx-mixture-rotation} is a probability distribution on $S^{d-1}$ invariant under any $U \in O (d)$, since
\begin{align*}
  U_* \pi'
  &= \int_{S^{d-1}} U_* \pi_{v, \gamma} \pi (\di v)
  = \int_{S^{d-1}} \pi_{Uv, \gamma} \pi (\di v)
  = \int_{S^{d-1}} \pi_{v, \gamma} \pi (\di v)
  = \pi'
  \, .
\end{align*}
Equation~\eqref{eq:approx-mixture-rotation}, together with Fubini's theorem, implies that
\begin{equation*}
  \int_{S^{d-1}} A_{v, \gamma} \pi (\di v)
  = \int_{S^{d-1}} %
  \int_{S^{d-1}} \theta \theta^\top \pi_{v, \gamma} (\di \theta) %
  \pi (\di v)
  = \int_{S^{d-1}} \theta \theta^\top \pi (\di \theta)
  =: A
  \, .
\end{equation*}
Since $A$ commutes with isometries (by invariance of $\pi$), it is of the form $c I_d$ with $c = \tr (A) / d = (1/d) \int_{S^{d-1}} \| \theta \|^2 \pi (\di \theta) = 1/d$.
Plugging~\eqref{eq:approx-A-decomp-phi} into the previous equality, we obtain
\begin{equation*}
  \frac{1}{d} I_d
  = \int_{S^{d-1}} \Big[ \wt \phi (\gamma) v v^\top + \phi (\gamma) \cdot \frac{1}{d} I_d \Big] \pi (\di v)
  = \frac{1}{d} \wt \phi (\gamma) I_d + \frac{1}{d} \phi (\gamma) I_d
  \, ,
\end{equation*}
so that $\wt \phi (\gamma) = 1 - \phi (\gamma)$.
The decomposition~\eqref{eq:approx-A-decomp-phi} then writes:
\begin{equation*}
  A_{v, \gamma}
  = \big( 1 - \phi (\gamma) \big) v v^\top + \phi (\gamma) \cdot \frac{1}{d} I_d
  \, .
\end{equation*}
Recalling that $F_{v, \gamma} (\Sigma) = \tr (\Sigma A_{v, \gamma})$, we obtain the desired expression~\eqref{eq:approximation-term-expression} for $F_{v, \gamma}$.

Finally, note that on the one hand,
\begin{align*}
  \langle A_{v, \gamma} v, v \rangle
  = (1 - \phi (\gamma)) \| v \|^2 + \phi (\gamma) \cdot \frac{1}{d} \| v \|^2
  = 1 - \frac{d-1}{d} \phi (\gamma)
  \, ,
\end{align*}
while on the other hand:
\begin{align*}
  \langle A_{v, \gamma} v, v \rangle
  = \int_{S^{d-1}} \langle \theta, v \rangle^2 \pi_{v, \gamma} (\di \theta)
  \, ,
\end{align*}
so that
\begin{equation*}
  \phi (\gamma)
  = \frac{d}{d-1} \int_{S^{d-1}} \big( 1 - \langle \theta, v \rangle^2 \big) \pi_{v, \gamma} (\di \theta) \geq 0
  \, ,
\end{equation*}
where we used that $\langle \theta, v\rangle^2 \leq 1$ by the Cauchy-Schwarz inequality.

Now, let $\alpha$ denote the angle between $\theta$ and $v$.
We have $\langle \theta, v\rangle = \cos \alpha$ and $\| \theta - v \|^2 = (1 - \cos \alpha)^2 + \sin^2 \alpha = 2 (1 - \cos \alpha)$, so that $\langle \theta, v \rangle = 1 - \frac{1}{2} \| \theta - v \|^2$.
Since $\pi_{v, \gamma} (\di \theta)$-almost surely, $\| \theta - v \| \leq \gamma$, this implies
\begin{equation*}
  1 - \langle \theta, v\rangle^2
  = 1 - \Big( 1 - \frac{1}{2} \| \theta - v \|^2 \Big)^2
  = \| \theta - v \|^2 - \frac{1}{4} \| \theta - v \|^4
  \leq \gamma^2
  \, .
\end{equation*}
Integrating this inequality over $\pi_{v, \gamma}$ yields $\phi (\gamma) \leq d / (d-1) \gamma^2$; this establishes~\eqref{eq:approximation-term-phi}.

\subsection{Control of the entropy term}
\label{sec:control-entropy-term}

We now turn to the control of the entropy term in~\eqref{eq:pac-bayes-small-ball}.
Specifically, we will
show that, for every $v \in S^{d-1}$ and $\gamma > 0$, 
\begin{equation}
  \label{eq:entropy-term}
  \kll{\pi_{v, \gamma}}{\pi}
  \leq d \log \Big( 1 + \frac{2}{\gamma} \Big)
  \, .
\end{equation}

First, since %
$\di \pi_{v, \gamma} / \di \pi = \pi [ \Theta (v, \gamma) ]^{-1}$ $\pi_{v, \gamma}$-almost surely, $\kll{\pi_{v, \gamma}}{\pi} = \log {\pi [ \Theta (v, \gamma) ]}^{-1}$.
Now, let $N = N_c (\gamma, S^{d-1})$ denote the $\gamma$-covering number of $S^{d-1}$, namely the smallest $N \geq 1$ such that there exists $\theta_1, \dots, \theta_N \in S^{d-1}$ with
\begin{equation}
  \label{eq:covering-sphere}
  S^{d-1} = \bigcup_{i = 1}^N \Theta (\theta_i, \gamma)
  \, .
\end{equation}
Applying a union bound to~\eqref{eq:covering-sphere} and using the fact that $\pi [ \Theta (\theta_i, \gamma) ] = \pi [\Theta (v, \gamma)]$
yields $1 \leq N \pi [ \Theta (v, \gamma)]$, namely
\begin{equation}
  \label{eq:kl-covering}
  \kll{\pi_{v, \gamma}}{\pi}
  \leq \log N
  \, .
\end{equation}
Now, let $N_p (\gamma, S^{d-1})$ denote the $\gamma$-packing number of $S^{d-1}$, which is the largest number of points in $S^{d-1}$ with pairwise distances at least $\gamma$.
We have,
denoting $B^d = \{ x \in \R^d : \| x \| \leq 1 \}$,
\begin{equation}
  \label{eq:bound-covering-sphere}
  N \leq N_p (\gamma, S^{d-1})
  \leq N_p (\gamma, B^{d})
  \leq \left( 1 + \frac{2}{\gamma} \right)^d
  ,
\end{equation}
where the first inequality follows from a comparison of covering and packing numbers \cite[Lemma~4.2.8]{vershynin2018high}, the second one from the inclusion $S^{d-1} \subset B^d$ and the last one from a volumetric argument \cite[Lemma 4.2.13]{vershynin2018high}.
Combining~\eqref{eq:kl-covering} and~\eqref{eq:bound-covering-sphere} establishes~\eqref{eq:entropy-term}.

\subsection{Conclusion of the proof}
\label{sec:conclusion-proof}

First note that, since $\| X_i' \|^2 = \| X_i \|^2 \wedge d \leq d$ for $1 \leq i \leq n$,
\begin{equation}
  \label{eq:trace-empirical-norms}
  \tr (\wh \Sigma_n')
  = \frac{1}{n} \sum_{i=1}^n \| X_i' \|^2
  \leq d
  \, .
\end{equation}
Putting together the previous bounds~\eqref{eq:pac-bayes-small-ball}, \eqref{eq:approximation-term-expression}, \eqref{eq:entropy-term} and~\eqref{eq:trace-empirical-norms}, we get with probability $1 - e^{-n u}$, for every $v \in S^{d-1}$, $\gamma \in (0, 1/2]$,
\begin{align*}
  \alpha' \log \left( \frac{\lambda}{C^2} \right) - \log 2 - \frac{d}{n} &\log \left( 1 + \frac{2}{\gamma} \right) - u
  \leq \lambda F_{v, \gamma} (\wh \Sigma_n') \\
  &= \lambda \Big( (1 - \phi (\gamma)) \langle \wh \Sigma_n' v, v\rangle + \phi (\gamma) \cdot \frac{1}{d} \tr (\wh \Sigma_n') \Big) \\
  &\leq \lambda \left[ (1 - \phi (\gamma)) \langle \wh \Sigma_n' v, v\rangle + \phi (\gamma) \right]
\end{align*}
In particular, rearranging, and using the fact that $\phi (\gamma) \leq 1/2$ for $\gamma \leq 1/2$, as well as $\phi (\gamma) \leq \gamma^2$ and $\lamin (\wh \Sigma_n') = \inf_v \langle \wh \Sigma_n' v, v\rangle$, we get with probability $1 - e^{- n u}$,
\begin{equation}
  \label{eq:lower-bound-lambda-gamma}
  \lamin (\wh \Sigma_n')
  \geq \frac{2}{\lambda} \left[ \alpha' \log \left( \frac{\lambda}{C^2} \right) - \log 2 - \frac{d}{n} \log \left( 1 + \frac{2}{\gamma} \right) - u \right] - 2 \gamma^2 
\end{equation}

We first approximately maximize the above lower bound in $\gamma$, given $\lambda$.
Since $\gamma \leq 1/2$, $1 + 2 / \gamma \leq 1 + 1/\gamma^2 \leq 5 / (4\gamma^2)$.
We are therefore led to minimize
\begin{equation*}
  \frac{2d}{\lambda n} \log \left( \frac{5}{4\gamma^2} \right) + 2 \gamma^2
\end{equation*}
over $\gamma^2 \leq 1/4$.
Now, let $\gamma^2 = d / (2 \lambda n)$, which belongs to the prescribed range if
\begin{equation}
  \label{eq:constraint-lambda}
  \lambda \geq \frac{2 d}{n}
  \, .
\end{equation}
For this choice of $\gamma$, %
the lower bound~\eqref{eq:lower-bound-lambda-gamma} becomes
\begin{align*}
  \lamin &(\wh \Sigma_n')
  \geq \frac{2}{\lambda} \left[ \alpha' \log \left( \frac{\lambda}{C^2} \right) - \log 2 - \frac{d}{n} \log \left( \frac{5 \lambda n}{2 d} \right) - u \right] - \frac{d}{\lambda n} \\
  &= \frac{2}{\lambda} \left[ \left( \alpha' - \frac{d}{n} \right) \log \lambda - \alpha' \log C^2 - \left\{ \log 2 + \frac{d}{n} \log \left( \frac{5 n}{2 d}\right) + \frac{d}{2n} \right\} - u \right]
\end{align*}
Now, recall that by assumption, $d/n \leq \alpha/6 \leq 1/6$, so that (by monotonicity of $x \mapsto - x \log x$ on $(0, e^{-1}]$, replacing $d/n$ by $1/6$%
) the term inside braces is smaller than $c_0 = 1.3$.
In addition, assume that $\lambda \geq C^4$, so that $\log (\lambda / C^4) \geq 0$; in this case, condition~\eqref{eq:constraint-lambda} is automatically satisfied, since $2 d/n \leq 1/3 \leq C^4$.
Finally, since $\alpha' = \alpha/ (2+\alpha) \geq \alpha / 3$ and $d/n \leq \alpha/6$, $\alpha' \leq 2 (\alpha' - d/n)$ and $\alpha' - d/n \geq \alpha / 6$, so that
\begin{equation*}
  \left( \alpha' - \frac{d}{n} \right) \log \lambda - \alpha' \log C^2
  \geq \left( \alpha' - \frac{d}{n} \right) \log \left( \frac{\lambda}{C^4} \right)
  \geq \frac{\alpha}{6} \log \left( \frac{\lambda}{C^4} \right)
  ,
\end{equation*}
the previous inequalities implies that, for every $\lambda \geq C^4$ and $u > 0$, with probability at least $1 - e^{-n u}$,
\begin{equation*}
  \lamin (\wh \Sigma_n')
  \geq \frac{2}{\lambda} \left[ \frac{\alpha}{6} \log \left( \frac{\lambda}{C^4} \right) - c_0 - u \right]
  = \frac{\alpha}{3 C^4} \frac{\log \lambda' - 6 \alpha^{-1} (c_0 + u)}{\lambda'}
\end{equation*}
where $\lambda' = \lambda / C^4 \geq 1$.
A simple analysis shows that for $c \in \R$, the function $\lambda' \mapsto (\log \lambda' - c)/ \lambda'$ admits a maximum on $(0, + \infty)$ of $e^{-c - 1}$, reached at $\lambda' = e^{c+1}$.
Here $c = 6 \alpha^{-1} (c_0 + u) > 0$, so that $\lambda' > e > 1$.
Hence, for every $u > 0$, with probability at least $1 - e^{-nu}$,
\begin{equation}
  \label{eq:lower-bound-u}
  \lamin (\wh \Sigma_n')
  \geq \frac{\alpha}{3 C^4} \exp \left( - 1 - \frac{6 (c_0 + u)}{\alpha} \right)
  \geq C'^{-1} e^{- 6 u / \alpha}
  =: t \, ,
\end{equation}
where we let
  $C' := 3 C^4 e^{1 + 9/\alpha}$
  (using the fact that $6 c_0 \leq 8$ and %
  $1/\alpha \leq e^{1/\alpha}$).
  Inverting the bound~\eqref{eq:lower-bound-u}, we obtain that for every $t < C'^{-1}$,
  \begin{equation*}
    \P \big( \lamin (\wh \Sigma_n') \leq t \big)
    \leq (C' t)^{\alpha n / 6}
    \, .
  \end{equation*}
  Since $\lamin (\wh \Sigma_n) \geq \lamin (\wh \Sigma_n')$, and since the bound trivially holds for $t \geq C'^{-1}$, this concludes the proof.  

\section{Proofs from Section~\ref{sec:minimax-ls}}
\label{sec:proofs-least-squares}

In this section, we gather the remaining proofs of results from Section~\ref{sec:minimax-ls} on least squares regression, namely those of Proposition~\ref{prop:degenerate-minimax-risk}, Theorem~\ref{thm:ls-minimax}, Proposition~\ref{prop:upper-well-constant}, Theorem~\ref{thm:upper-well-spec} and Proposition~\ref{prop:upper-misspecified}.

\subsection{Preliminary: risk of Ridge and OLS estimators}
\label{sec:proof-least-sqaures-preliminaries}

We start with general expressions for the risk, which will be used several times in the proofs.
Here, we assume that $(X, Y)$ is as in Section~\ref{sec:minimax-ls}, namely $\E [Y^2] < + \infty$, $\E [ \| X \|^2 ] < + \infty$ and $\Sigma := \E [X X^\top]$ is invertible.
Letting $\eps := Y - \langle \beta^*, X\rangle$ denote the error, where $\beta^* := \Sigma^{-1} \E [Y X]$ is the risk minimizer, we let $m (X) := \E [\eps \cond X] = \E [Y \cond X] - \langle \beta^*, X\rangle$ denote the {misspecification} (or {approximation}) error of the linear model, and $\sigma^2 (X) := \Var (\eps \cond X) = \Var (Y \cond X)$ denote the conditional variance of the noise.

\begin{lemma}[Risk of the Ridge estimator]
  \label{lem:risk-ridge-general}
  Assume that $(X, Y)$ is of the previous form.
  Let $\lambda \geq 0$, and assume that either $\lambda > 0$ or that $P_X$ is non-degenerate and $n \geq d$.
  The risk of the Ridge estimator $\ridgeb$, defined by
  \begin{equation}
    \label{eq:def-ridge-new}
    \ridgeb := \argmin_{\beta \in \R^d} \bigg\{ \frac{1}{n} \sum_{i=1}^n ( Y_i - \langle \beta, X_i \rangle )^2 + \lambda \| \beta \|^2 \bigg\}
    = \big(\wh \Sigma_n + \lambda \Id \big)^{-1} \cdot \frac{1}{n} \sum_{i=1}^n Y_i X_i
    \, ,
  \end{equation}
  equals
  \begin{align}
    \label{eq:risk-ridge-general}
    \E \big[ \excessrisk (\wh \beta_{\lambda, n}) \big]
    &= \E \bigg[ \bigg\| \frac{1}{n} \sum_{i=1}^n m (X_i) X_i - \lambda \beta^* \bigg\|_{(\wh \Sigma_n + \lambda \id)^{-1} \Sigma (\wh \Sigma_n + \lambda \id)^{-1}}^2 \bigg] + \nonumber \\
    &\quad + \frac{1}{n^2} \E \bigg[ \sum_{i=1}^n \sigma^2 (X_i) \| X_i \|_{(\wh \Sigma_n + \lambda \id)^{-1} \Sigma (\wh \Sigma_n + \lambda \id)^{-1}}^2 \bigg]
      \, .
  \end{align}
\end{lemma}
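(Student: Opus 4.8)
The plan is to derive an exact bias--variance decomposition of the error $\ridgeb - \beta^*$, and then integrate out the noise conditionally on the design $X_1, \dots, X_n$. First I would note that $\wh \Sigma_n + \lambda \id$ is invertible almost surely --- immediate if $\lambda > 0$, and a consequence of Definition~\ref{def:equiv-degenerate} if $\lambda = 0$, $P_X$ is non-degenerate and $n \geq d$ --- so that $\ridgeb$ is well defined. Writing $Y_i = \langle \beta^*, X_i \rangle + \eps_i$ gives $n^{-1} \sum_{i=1}^n Y_i X_i = \wh \Sigma_n \beta^* + n^{-1} \sum_{i=1}^n \eps_i X_i$; combining this with the elementary identity $(\wh \Sigma_n + \lambda \id)^{-1} \wh \Sigma_n = \id - \lambda (\wh \Sigma_n + \lambda \id)^{-1}$ applied to $\beta^*$, the closed form in~\eqref{eq:def-ridge-new} gives
\begin{equation*}
  \ridgeb - \beta^* = (\wh \Sigma_n + \lambda \id)^{-1} \Big( \frac{1}{n} \sum_{i=1}^n \eps_i X_i - \lambda \beta^* \Big) ,
\end{equation*}
and hence, writing $M := (\wh \Sigma_n + \lambda \id)^{-1} \Sigma (\wh \Sigma_n + \lambda \id)^{-1}$,
\begin{equation*}
  \excessrisk (\ridgeb) = \| \ridgeb - \beta^* \|_\Sigma^2 = \Big\| \frac{1}{n} \sum_{i=1}^n \eps_i X_i - \lambda \beta^* \Big\|_M^2 .
\end{equation*}

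Next I would split $\eps_i = m (X_i) + \xi_i$ with $\xi_i := \eps_i - m (X_i)$, so that, since the pairs $(X_i, Y_i)$ are \iid, conditionally on $X_1, \dots, X_n$ the variables $\xi_1, \dots, \xi_n$ are independent, centered, with conditional variance $\sigma^2 (X_i)$. Decomposing the vector inside the $M$-norm as $b + v$, where $b := n^{-1} \sum_{i=1}^n m (X_i) X_i - \lambda \beta^*$ is measurable with respect to $X_1, \dots, X_n$ and $v := n^{-1} \sum_{i=1}^n \xi_i X_i$, I would expand $\| b + v \|_M^2 = \| b \|_M^2 + 2 \langle M b, v \rangle + \| v \|_M^2$, take the conditional expectation given $X_1, \dots, X_n$, use that $\E [ v \mid X_1, \dots, X_n ] = 0$ to kill the cross term, and compute $\E [ \| v \|_M^2 \mid X_1, \dots, X_n ] = n^{-2} \sum_{i=1}^n \sigma^2 (X_i) \| X_i \|_M^2$ --- the off-diagonal terms vanishing by conditional independence and zero conditional mean of the $\xi_i$. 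Taking the outer expectation over $X_1, \dots, X_n$ then yields~\eqref{eq:risk-ridge-general}; since every term is nonnegative, this identity is valid in $[0, +\infty]$, so no extra integrability is needed. Setting $\lambda = 0$ specializes the formula to the OLS estimator $\lsb$.

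I do not expect a genuine obstacle: the computation is essentially bookkeeping. The one point to handle with (mild) care is the justification that the conditional cross term has vanishing expectation and that conditional expectations may be split termwise; this is where the standing hypotheses $\E [Y^2] < + \infty$ and $\E [ \| X \|^2 ] < + \infty$ are used --- they ensure that $\beta^*$, $m (\cdot)$ and $\sigma^2 (\cdot)$ are well defined and that $\E [ |\eps| \mid X ] < + \infty$ almost surely, which makes $\langle M b, v \rangle$ conditionally integrable.
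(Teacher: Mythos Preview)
Your proposal is correct and follows essentially the same route as the paper: both start from $\ridgeb - \beta^* = (\wh \Sigma_n + \lambda \id)^{-1}\big(n^{-1}\sum_i \eps_i X_i - \lambda \beta^*\big)$, condition on the design, and use the conditional moment structure of the errors to split into the two terms of~\eqref{eq:risk-ridge-general}. The only cosmetic difference is that you explicitly decompose $\eps_i = m(X_i) + \xi_i$ before expanding, whereas the paper expands directly using $\E[\eps_i \eps_j \mid X_1,\dots,X_n] = m(X_i)m(X_j)$ for $i\neq j$ and $\E[\eps_i^2 \mid X_1,\dots,X_n] = m(X_i)^2 + \sigma^2(X_i)$; these are the same computation.
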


\begin{proof}%
  Since $Y_i = \langle \beta^*, X_i\rangle + \eps_i$ for $i =1, \dots, n$, and since $\langle \beta^*, X_i \rangle X_i = X_i X_i^\top \beta^*$, we have
  \begin{equation}
    \label{eq:proof-ridge-1}
    \frac{1}{n} \sum_{i=1}^n Y_i X_i
    = \wh \Sigma_n \beta^* + \frac{1}{n} \sum_{i=1}^n \eps_i X_i
    \, .
  \end{equation}
  Hence, the excess risk of %
  $\wh \beta_{\lambda, n}$ (which is well-defined by the assumptions) is
  \begin{align}    
    \E \big[ \excessrisk (\wh \beta_{\lambda, n}) \big]
    &= \E \bigg[ \bigg\| (\wh \Sigma_n + \lambda \id)^{-1} \bigg( \wh \Sigma_n \beta^* + \frac{1}{n} \sum_{i=1}^n \eps_i X_i \bigg) - \beta^* \bigg\|_{\Sigma}^2 \bigg] \nonumber \\
    &= \E \bigg[ \bigg\| (\wh \Sigma_n + \lambda \id)^{-1} \cdot \frac{1}{n} \sum_{i=1}^n \eps_i X_i - \lambda (\wh \Sigma_n + \lambda \id)^{-1} \beta^* \bigg\|_{\Sigma}^2 \bigg] \nonumber \\
    &= \E \bigg[ \E \bigg[ \bigg\| \frac{1}{n} \sum_{i=1}^n \eps_i X_i - \lambda \beta^* \bigg\|_{(\wh \Sigma_n + \lambda \id)^{-1} \Sigma (\wh \Sigma_n + \lambda \id)^{-1}}^2 \Big| X_1, \dots, X_n \bigg] \bigg] \nonumber \\
    &= \E \bigg[ \bigg\| \frac{1}{n} \sum_{i=1}^n m (X_i) X_i - \lambda \beta^* \bigg\|_{(\wh \Sigma_n + \lambda \id)^{-1} \Sigma (\wh \Sigma_n + \lambda \id)^{-1}}^2 \bigg] + \nonumber \\
    &\quad + \frac{1}{n^2} \E \bigg[ \sum_{i=1}^n \sigma^2 (X_i) \| X_i \|_{(\wh \Sigma_n + \lambda \id)^{-1} \Sigma (\wh \Sigma_n + \lambda \id)^{-1}}^2 \bigg]
      \label{eq:proof-ridge-second-moment} %
  \end{align}
  where~\eqref{eq:proof-ridge-second-moment} is obtained by expanding and using the fact that, for
  $i \neq j$,
  \begin{align*}
    \E \big[ \eps_i \eps_j \cond X_1, \dots, X_n \big]
    &= m (X_i) m (X_j) \, , \\
    \E \big[ \eps_i^2 \cond X_1, \dots, X_n \big]
    &= m (X_i)^2 + \sigma^2 (X_i)
      \, . \qedhere
  \end{align*}
\end{proof}

In the special case where $\lambda = 0$,
the previous risk decomposition becomes:

\begin{lemma}[Risk of the OLS estimator]
  \label{lem:risk-ols-general}
  Assume that $P_X$ is non-degenerate and $n \geq d$.
  Then, %
  \begin{equation}
    \label{eq:risk-ols-general}
    \E \big[ \excessrisk (\lsb) \big]
    = \E \bigg[ \bigg\| \frac{1}{n} \sum_{i=1}^n m (X_i) \wt X_i \bigg\|_{\wt \Sigma_n^{-2}}^2 \bigg] + \frac{1}{n^2} \E \bigg[ \sum_{i=1}^n \sigma^2 (X_i) \| \wt X_i \|_{\wt \Sigma_n^{-2}}^2 \bigg]
    \, ,
  \end{equation}
  where we let $\wt X_i = \Sigma^{-1/2} X_i$ and $\wt \Sigma_n = \Sigma^{-1/2} \wh \Sigma_n \Sigma^{-1/2}$.
\end{lemma}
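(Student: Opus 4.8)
The plan is to obtain this as the $\lambda = 0$ specialization of Lemma~\ref{lem:risk-ridge-general}, followed by a change of variables to the whitened coordinates. Since $P_X$ is non-degenerate and $n \geq d$, the matrix $\wh \Sigma_n$ is invertible almost surely (Definition~\ref{def:equiv-degenerate}), so the OLS estimator $\lsb$ coincides with $\wh \beta_{0, n}$ and Lemma~\ref{lem:risk-ridge-general} applies with $\lambda = 0$ (its hypotheses are exactly those assumed here). This immediately yields
\begin{equation*}
  \E \big[ \excessrisk (\lsb) \big]
  = \E \bigg[ \bigg\| \frac{1}{n} \sum_{i=1}^n m (X_i) X_i \bigg\|_{\wh \Sigma_n^{-1} \Sigma \wh \Sigma_n^{-1}}^2 \bigg]
  + \frac{1}{n^2} \E \bigg[ \sum_{i=1}^n \sigma^2 (X_i) \| X_i \|_{\wh \Sigma_n^{-1} \Sigma \wh \Sigma_n^{-1}}^2 \bigg]
  \, .
\end{equation*}

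The remaining step is purely algebraic: rewrite the quadratic form associated with $\wh \Sigma_n^{-1} \Sigma \wh \Sigma_n^{-1}$ in terms of $\wt \Sigma_n = \Sigma^{-1/2} \wh \Sigma_n \Sigma^{-1/2}$. Using $\wh \Sigma_n = \Sigma^{1/2} \wt \Sigma_n \Sigma^{1/2}$, hence $\wh \Sigma_n^{-1} = \Sigma^{-1/2} \wt \Sigma_n^{-1} \Sigma^{-1/2}$, one computes $\wh \Sigma_n^{-1} \Sigma \wh \Sigma_n^{-1} = \Sigma^{-1/2} \wt \Sigma_n^{-2} \Sigma^{-1/2}$, so that for any vector $v \in \R^d$,
\begin{equation*}
  \| v \|_{\wh \Sigma_n^{-1} \Sigma \wh \Sigma_n^{-1}}^2 = \| \Sigma^{-1/2} v \|_{\wt \Sigma_n^{-2}}^2
  \, .
\end{equation*}
Applying this identity with $v = \frac{1}{n} \sum_{i=1}^n m (X_i) X_i$ in the first term (so that $\Sigma^{-1/2} v = \frac{1}{n} \sum_{i=1}^n m (X_i) \wt X_i$), and with $v = X_i$ in each summand of the second term (so that $\Sigma^{-1/2} v = \wt X_i$), transforms the display above into exactly~\eqref{eq:risk-ols-general}.

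I do not anticipate any genuine obstacle here: the argument is a one-line corollary of Lemma~\ref{lem:risk-ridge-general} plus a routine conjugation computation. The only points requiring attention are that $\Sigma$ is invertible (assumed throughout the section, so that $\Sigma^{\pm 1/2}$ are well-defined and the above manipulations are legitimate) and that $\wh \Sigma_n$ is almost surely invertible (guaranteed by non-degeneracy together with $n \geq d$), so that all matrix inverses appearing in the argument make sense.
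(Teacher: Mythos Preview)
Your proposal is correct and follows essentially the same approach as the paper: specialize Lemma~\ref{lem:risk-ridge-general} to $\lambda = 0$ (using non-degeneracy and $n \geq d$ to ensure $\wh \Sigma_n$ is almost surely invertible), then rewrite the quadratic form via the identity $\| x \|_{\wh \Sigma_n^{-1} \Sigma \wh \Sigma_n^{-1}} = \| \Sigma^{-1/2} x \|_{\wt \Sigma_n^{-2}}$. The paper's proof is the same one-line corollary, just stating the norm identity directly rather than expanding the conjugation as you do.
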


\begin{proof}
  This follows from Lemma~\ref{lem:risk-ridge-general} and the fact that, when $\lambda = 0$, for every $x \in \R^d$,
  \begin{equation*}
    \big\| x \big\|_{(\wh \Sigma_n + \lambda \id)^{-1} \Sigma (\wh \Sigma_n + \lambda \id)^{-1}}
    = \big\| \Sigma^{-1/2} x \big\|_{\Sigma^{1/2} \wh \Sigma_n^{-1} \Sigma \wh \Sigma_n^{-1} \Sigma^{1/2}}
    = \| \Sigma^{-1/2} x \|_{\wt \Sigma_n^{-2}}
    \, .
    \qedhere
  \end{equation*}
\end{proof}

\subsection{Proof of Theorem~\ref{thm:ls-minimax} and Proposition~\ref{prop:degenerate-minimax-risk}}
\label{sec:proof-minimax-risk}

\paragraph{Upper bound on the minimax risk.}

We start with an upper bound on the risk the least-squares estimator over the class $\wellclass(P_X, \sigma^2)$.
As in Theorem~\ref{thm:ls-minimax}, we assume that $n \geq d$ and that $P_X$ is non-degenerate.
Let $(X, Y) \sim P \in \wellclass (P_X, \sigma^2)$, so that $m (X) = 0$ and $\sigma^2 (X) \leq \sigma^2$.
It follows from Lemma~\ref{lem:risk-ols-general} that 
\begin{align*}
  \E \big[ \excessrisk (\lsb) \big]
  &\leq \frac{\sigma^2}{n^2} \E \bigg[ \sum_{i=1}^n \sigma^2 (X_i) \| \wt X_i \|_{\wt \Sigma_n^{-2}}^2 \bigg]
  = \frac{\sigma^2}{n^2} \E \bigg[ \tr \bigg( \wt \Sigma_n^{-2} \sum_{i=1}^n \wt X_i \wt X_i^\top \bigg) \bigg] \\
  &= \frac{\sigma^2}{n} \E \tr (\wt \Sigma_n^{-1})
  \, .
\end{align*}
Hence, the maximum risk of the OLS estimator $\lsb$ over the class $\wellclass (P_X, \sigma^2)$
(and thus the minimax risk over this class)
is at most $\sigma^2 \E [ \tr (\wt \Sigma_n^{-1})  ] / n $.

\paragraph{Lower bound on the minimax risk.}

We now provide a lower bound on the minimax risk over $\gaussclass (P_X, \sigma^2)$.
We will in fact establish the lower bound both in the setting of Theorem~\ref{thm:ls-minimax} (namely, $P_X$ is non-degenerate and $n \geq d$) and that of Proposition~\ref{prop:degenerate-minimax-risk} (the remaining cases).
In particular, we do not assume for now that $P_X$ is non-degenerate or that $n \geq d$.

For $\beta^* \in \R^d$, let $P_{\beta^*}$ denote the joint distribution of $(X, Y)$ where $X \sim P_X$ and $Y = \langle \beta^*, X\rangle + \eps$ with $\eps \sim \gaussdist (0, \sigma^2)$ independent of $X$.
Now, consider the decision problem with model $\gaussclass (P_X, \sigma^2) = \{ P_{\beta^*} : \beta^* \in \R^d \}$, decision space $\R^d$ and loss function $\loss (\beta^*, \beta) = \excessrisk_{P_{\beta^*}} (\beta) = \| \beta - \beta^* \|_\Sigma^2$.
Let $\risk (\beta^*, \wh \beta_n) = \E_{\beta^*} [ \loss (\beta^*, \wh \beta_n) ]$ denote the risk under $P_{\beta^*}$ of a decision rule $\wh \beta_n$ (that is, an estimator of $\beta^*$ using an \iid sample of size $n$ from $P_{\beta^*}$), namely its expected excess risk.
Consider the prior $\Pi_\lambda = \gaussdist (0, \sigma^2 / (\lambda n) \id)$ on $\gaussclass (P_X, \sigma^2)$. %
A standard computation (see, \eg, \cite{gelman2013bayesian})
shows that the posterior
$\Pi_{\lambda} (\cdot \cond (X_1, Y_1), \dots, (X_n, Y_n))$ is $\gaussdist (\wh \beta_{\lambda, n}, (\sigma^2/n) \cdot (\wh \Sigma_n + \lambda \id)^{-1} )$.
Since the loss function $\loss$ is quadratic, the Bayes estimator under $\Pi_\lambda$ is the expectation of the posterior, which is %
$\wh \beta_{\lambda, n}$.
Hence, using the comparison between minimax and Bayes risks:
\begin{equation}
  \label{eq:bayes-minimax-gauss}
  \inf_{\wh \beta_n} \sup_{P_{\beta^*} \in \gaussclass (P_X, \sigma^2)} \risk (\beta^*, \wh \beta_n)
  \geq \inf_{\wh \beta_n} \E_{\beta^* \sim \Pi_{\lambda}} \big[ \risk (\beta^*, \wh \beta_n) \big]
  = \E_{\beta^* \sim \Pi_{\lambda}} \big[ \risk (\beta^*, \wh \beta_{\lambda, n}) \big]
  \, ,
\end{equation}
where the infimum is over all estimators $\wh \beta_n$.
Note that
the left-hand side of~\eqref{eq:bayes-minimax-gauss} is simply the minimax
excess risk over $\gaussclass (P_X, \sigma^2)$.
On the other hand, applying Lemma~\ref{lem:risk-ridge-general} with $m (X) = 0$ and $\sigma^2 (X) = \sigma^2$ and noting that
\begin{align*}
  \E \bigg[ \sum_{i=1}^n %
  \| X_i \|_{(\wh \Sigma_n + \lambda \id)^{-1} \Sigma (\wh \Sigma_n + \lambda \id)^{-1}}^2 \bigg]
  &= %
    \E \bigg[ \tr \bigg\{ (\wh \Sigma_n + \lambda \id)^{-1} \Sigma (\wh \Sigma_n + \lambda \id)^{-1} \sum_{i=1}^n X_i X_i^\top \bigg\} \bigg] \\
  &= n \, \E \big[ \tr \big\{ (\wh \Sigma_n + \lambda \id)^{-1} \Sigma (\wh \Sigma_n + \lambda \id)^{-1} \wh \Sigma_n \big\} \big]
  \, ,
\end{align*}
we obtain
\begin{equation*}
  \risk (\beta^*, \wh \beta_{\lambda, n})
  = \lambda^2 \,\E \Big[ \| \beta^* \|_{(\wh \Sigma_n + \lambda \id)^{-1} \Sigma (\wh \Sigma_n + \lambda \id)^{-1}}^2 \Big] + \frac{\sigma^2}{n} \E \big[ \tr \big\{ (\wh \Sigma_n + \lambda \id)^{-1} \Sigma (\wh \Sigma_n + \lambda \id)^{-1} \wh \Sigma_n \big\} \big]
  \, .
\end{equation*}
This implies that
\begin{align}
  \label{eq:bayes-risk-gaussian}
  \nonumber
  \E_{\beta^* \sim \Pi_{\lambda}} \big[ \risk (\beta^*, \wh \beta_{\lambda, n}) \big]
  &= \E_{\beta^* \sim \Pi_\lambda} \Big[ \lambda^2 \,\E \Big[ \| \beta^* \|_{(\wh \Sigma_n + \lambda \id)^{-1} \Sigma (\wh \Sigma_n + \lambda \id)^{-1}}^2 \Big] \Big] + \\ &\quad + \frac{\sigma^2}{n} \E \big[ \tr \big\{ (\wh \Sigma_n + \lambda \id)^{-1} \Sigma (\wh \Sigma_n + \lambda \id)^{-1} \wh \Sigma_n \big\} \big]  
\end{align}
where $\E$ simply denotes the expectation with respect to $(X_1, \dots, X_n) \sim P_X^{n}$.
Now, by Fubini's theorem, and since $\E_{\beta^*\sim \Pi_\lambda} [ \beta^* (\beta^*)^\top ] = \sigma^2 / (\lambda n) \id$, we have
\begin{align}
  \label{eq:bayes-risk-fubini}
  &\E_{\beta^* \sim \Pi_\lambda} \Big[ \lambda^2 \,\E \Big[ \| \beta^* \|_{(\wh \Sigma_n + \lambda \id)^{-1} \Sigma (\wh \Sigma_n + \lambda \id)^{-1}}^2 \Big] \Big] \nonumber \\
  =& \, \lambda^2 \cdot \E \Big[ \E_{\beta^* \sim \Pi_\lambda} \Big[ \tr \big\{ (\wh \Sigma_n + \lambda \id)^{-1} \Sigma (\wh \Sigma_n + \lambda \id)^{-1} \beta^* (\beta^*)^\top \big\} \Big] \Big] \nonumber \\
  =& \, \frac{\sigma^2}{n} \E \big[ \tr \big\{ (\wh \Sigma_n + \lambda \id)^{-1} \Sigma (\wh \Sigma_n + \lambda \id)^{-1} \lambda \id \big\} \big]
     \, .
\end{align}
Plugging~\eqref{eq:bayes-risk-fubini} into~\eqref{eq:bayes-risk-gaussian} shows that the Bayes risk under $\Pi_\lambda$ equals
\begin{equation}
  \label{eq:bayes-risk-gaussian-2}
  \frac{\sigma^2}{n} \E \big[ \tr \big\{ (\wh \Sigma_n + \lambda \id)^{-1} \Sigma (\wh \Sigma_n + \lambda \id)^{-1} (\wh \Sigma_n + \lambda \id) \big\} \big]
  = \frac{\sigma^2}{n} \E \big[ \tr \big\{ (\wh \Sigma_n + \lambda \id)^{-1} \Sigma \big\} \big]
  \, .
\end{equation}
Hence, by~\eqref{eq:bayes-minimax-gauss} the minimax risk is larger than $({\sigma^2}/{n}) \cdot \E [ \tr \{ (\wh \Sigma_n + \lambda \id)^{-1} \Sigma \} ]$ for every $\lambda > 0$.
We now distinguish the settings of Theorem~\ref{thm:ls-minimax} and Proposition~\ref{prop:degenerate-minimax-risk}.

\emph{Degenerate case.}
First, assume that $P_X$ is degenerate or that $n < d$.
By Fact~\ref{prop:equiv-degenerate}, with probability $p > 0$, the matrix $\wh \Sigma_n$ is non-invertible.
When this occurs, let $\theta \in \R^d$ be such that $\| \theta \| = 1$ and $\wh \Sigma_n (\Sigma^{-1/2} \theta) = 0$.
We then have, for every $\lambda > 0$, 
\begin{equation*}
  \langle \Sigma^{-1/2} (\wh \Sigma_n + \lambda \id) \Sigma^{-1/2} \theta, \theta \rangle
  = 0 + \lambda \| \Sigma^{-1/2} \theta \|^2
  \leq \lambda \cdot \lamin^{-1} ,
\end{equation*}
where $\lamin = \lamin (\Sigma)$ denotes the smallest eigenvalue of $\Sigma$.
This implies that
\begin{align*}
  \tr \{ \Sigma^{1/2} (\wh \Sigma_n + \lambda \id)^{-1} \Sigma^{1/2} \}
  &\geq \lamax (\Sigma^{1/2} (\wh \Sigma_n + \lambda \id)^{-1} \Sigma^{1/2} ) \\
  &= \lamin^{-1} (\Sigma^{-1/2} (\wh \Sigma_n + \lambda \id) \Sigma^{-1/2})
  \geq \frac{\lamin}{\lambda}
\end{align*}
so that
\begin{equation}
  \label{eq:lower-bayes-risk-degenerate}
  \frac{\sigma^2}{n} \E \big[ \tr \big\{ (\wh \Sigma_n + \lambda \id)^{-1} \Sigma \big\} \big]
  \geq \frac{\sigma^2}{n} \cdot p \cdot \frac{\lamin}{\lambda}
  \, .
\end{equation}
Recalling that the left-hand side of equation~\eqref{eq:lower-bayes-risk-degenerate} is a lower bound on the minimax risk for every $\lambda > 0$, and noting that the right-hand side tends to $+ \infty$ as $\lambda \to 0$, %
the minimax risk is infinite as claimed in Proposition~\ref{prop:degenerate-minimax-risk}.

\emph{Non-degenerate case.}
Now, assume that $P_X$ is non-degenerate and that $n \geq d$.
By Fact~\ref{prop:equiv-degenerate}, $\wh \Sigma_n$ is invertible almost surely.
In addition, $\tr \{ (\wh \Sigma_n + \lambda \id)^{-1} \Sigma \} = \tr \{ (\Sigma^{-1/2} \wh \Sigma_n \Sigma^{-1/2} + \lambda \Sigma^{-1})^{-1} \}$ is decreasing in $\lambda$ (since $\lambda \mapsto \Sigma^{-1/2} \wh \Sigma_n \Sigma^{-1/2} + \lambda \Sigma^{-1}$ is increasing in $\lambda$), positive, and converges as $\lambda \to 0^+$ to $\tr (\wt \Sigma_n^{-1})$.
By the monotone convergence theorem, it follows that
\begin{equation}
  \label{eq:lower-bayes-risk-nondegenerate}
  \lim_{\lambda \to 0^+} \frac{\sigma^2}{n} \E \big[ \tr \big\{ (\wh \Sigma_n + \lambda \id)^{-1} \Sigma \big\} \big]
  = \frac{\sigma^2}{n}  \E \big[ \tr (\wt \Sigma_n^{-1}) \big]
  \, ,
\end{equation}
where the limit in the right-hand side belongs to $(0, + \infty]$.
Since the left-hand side is a lower bound on the minimax risk, the minimax risk over $\gaussclass (P_X, \sigma^2)$ is larger than $(\sigma^2/n) \E [ \tr (\wt \Sigma_n^{-1}) ]$.

\paragraph{Conclusion.}

Since $\gaussclass (P_X, \sigma^2) \subset \wellclass (P_X, \sigma^2)$, the minimax risk over $\wellclass (P_X, \sigma^2)$ is at least as large as that over $\gaussclass (P_X, \sigma^2)$.
When $P_X$ is degenerate or $n < d$, we showed that the minimax risk over $\gaussclass (P_X, \sigma^2)$ is infinite, establishing Proposition~\ref{prop:degenerate-minimax-risk}.
When $P_X$ is non-degenerate and $n \geq d$,
the minimax risk over $\wellclass (P_X, \sigma^2)$ is smaller, and the minimax risk over $\gaussclass (P_X, \sigma^2)$ larger, than $(\sigma^2/n) \E [ \tr (\wt \Sigma_n^{-1}) ]$,
so that these quantities agree and equal $(\sigma^2/n) \E [ \tr (\wt \Sigma_n^{-1}) ]$, as claimed in Theorem~\ref{thm:ls-minimax}.

\subsection{Proof of Theorem~\ref{thm:upper-well-spec}}
\label{sec:proof-trace-inverse}

The proof starts with the following lemma.

\begin{lemma}
  \label{lem:trace-inverse-approx}
  For any positive symmetric $d \times d$ matrix $A$ and $p \in [1, 2]$,
  \begin{equation}
    \label{eq:trace-inverse-approx}
    \tr (A^{-1}) + \tr (A) - 2 d
    \leq \max (1, \lamin(A)^{-1}) \cdot \tr \big( | A - I_d |^{2/p} \big)
    \, .
  \end{equation}  
\end{lemma}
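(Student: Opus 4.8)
The plan is to diagonalize $A$ and reduce the matrix inequality to a scalar inequality applied eigenvalue by eigenvalue. Write the spectral decomposition $A = \sum_{i=1}^d \lambda_i u_i u_i^\top$ with $\lambda_i > 0$ and $(u_i)_{1 \leq i \leq d}$ an orthonormal basis of $\R^d$. Then $\tr(A^{-1}) = \sum_i \lambda_i^{-1}$, $\tr(A) = \sum_i \lambda_i$, $\lamin(A)^{-1} = (\min_j \lambda_j)^{-1}$, and $|A - I_d|^{2/p} = \sum_i |\lambda_i - 1|^{2/p} u_i u_i^\top$ so that $\tr(|A - I_d|^{2/p}) = \sum_i |\lambda_i - 1|^{2/p}$. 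Using the identity $\lambda^{-1} + \lambda - 2 = (\lambda - 1)^2 / \lambda$, the inequality to be proven becomes
\[
  \sum_{i=1}^d \frac{(\lambda_i - 1)^2}{\lambda_i}
  \leq \max\Bigl(1, \bigl(\min_{1 \leq j \leq d} \lambda_j\bigr)^{-1}\Bigr) \sum_{i=1}^d |\lambda_i - 1|^{2/p} \, .
\]
Since $\max(1, (\min_j \lambda_j)^{-1}) \geq \max(1, \lambda_i^{-1})$ for each $i$, it suffices to establish the term-by-term bound
\[
  \frac{(\lambda - 1)^2}{\lambda} \leq \max(1, \lambda^{-1}) \, |\lambda - 1|^{2/p}
  \qquad \text{for every } \lambda > 0 \text{ and } p \in [1, 2],
\]
and then sum over $i = 1, \dots, d$.

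For this scalar inequality I would split on whether $\lambda \geq 1$. The case $\lambda = 1$ is trivial (both sides vanish). If $\lambda > 1$, then $\max(1, \lambda^{-1}) = 1$ and, dividing by $(\lambda - 1)^{2/p} > 0$, the claim is equivalent to $(\lambda - 1)^{2 - 2/p} \leq \lambda$; since $0 \leq 2 - 2/p \leq 1$ and $0 < \lambda - 1 < \lambda$ with $\lambda \geq 1$, monotonicity of $x \mapsto x^{2 - 2/p}$ on $[0, \infty)$ and of $a \mapsto \lambda^a$ gives $(\lambda - 1)^{2 - 2/p} \leq \lambda^{2 - 2/p} \leq \lambda$. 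If $0 < \lambda < 1$, then $\max(1, \lambda^{-1}) = \lambda^{-1}$ and, multiplying both sides by $\lambda$, the claim reduces to $(1 - \lambda)^2 \leq (1 - \lambda)^{2/p}$, which holds because $1 - \lambda \in (0, 1)$ and $2/p \leq 2$.

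The argument is elementary throughout; the only point requiring care is the case analysis on $\lambda$ together with the two monotonicity facts for power functions (increasing in the base on $[0,\infty)$ when the exponent $2 - 2/p$ is nonnegative, and decreasing in the exponent on $(0,1)$), which is why I would write those steps out explicitly rather than leaving them implicit. I do not anticipate any genuine obstacle: the content of the lemma is precisely the observation that $\max(1, \lamin(A)^{-1})$ dominates each local factor $1/\lambda_i$ appearing in the convexity-type remainder $\sum_i (\lambda_i - 1)^2/\lambda_i$, and that $(\lambda-1)^2/\lambda$ is controlled by $|\lambda - 1|^{2/p}$ up to that factor.
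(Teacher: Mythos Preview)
Your proof is correct and follows essentially the same approach as the paper: diagonalize, reduce to the scalar inequality $\lambda^{-1}+\lambda-2 \leq \max(1,\lambda^{-1})\,|\lambda-1|^{2/p}$, and verify it by case analysis. The only cosmetic difference is the split point---you break at $\lambda=1$ while the paper breaks at $a=2$---but the underlying idea (rewriting the left side as $(\lambda-1)^2/\lambda$ and comparing exponents of $|\lambda-1|$) is the same.
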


\begin{proof}[Proof of Lemma~\ref{lem:trace-inverse-approx}]
  Let us start by showing that, for every $a > 0$,
  \begin{equation}
    \label{eq:scalar-inverse-approx}
    a^{-1} + a - 2
    \leq \max (1, a^{-1}) \cdot | a - 1 |^{2/p}
    \, .
  \end{equation}
  Multiplying both sides of~\eqref{eq:scalar-inverse-approx} by $a>0$, it amounts to
  \begin{equation*}
    (a-1)^2 = %
    1 + a^2 - 2 a
    \leq \max (a, 1) \cdot | a - 1 |^{2/p}
    \, ,
  \end{equation*}
  namely to $| a - 1 |^{2 - 2/p} \leq \max (a, 1)$.
  For $a \in (0, 2]$, this inequality holds since $|a-1| \leq 1$ and $2 - 2/p \geq 0$, so that $| a - 1|^{2 - 2/p} \leq 1 \leq \max (a, 1)$.
  For $a \geq 2$, the inequalities $| a - 1|\geq 2$ and $2 - 2/p \leq 1$ imply that $| a - 1|^{2 - 2/p} \leq |a - 1| \leq a \leq \max (a, 1)$.
  This establishes~\eqref{eq:scalar-inverse-approx}.

  Now, let $a_1, \dots, a_d > 0$ be the eigenvalues of $A$.
  Without loss of generality, assume that $a_d = \min_j (a_j) = \lamin (A)$.
  Then, by %
  inequality~\eqref{eq:scalar-inverse-approx}
  and the bound $\max(1, a_j^{-1}) \leq \max (1, a_d^{-1})$, we have
  \begin{equation*}
    \tr (A^{-1}) + \tr (A) - 2 d
    = \sum_{j=1}^d ( a_j^{-1} + a_j - 2 ) %
    \leq \max (1, a_d^{-1}) \sum_{j=1}^d |a_j - 1|^{2/p}
    \, ,
  \end{equation*}
  which is precisely the desired inequality~\eqref{eq:trace-inverse-approx}.
\end{proof}

\begin{proof}[Proof of Theorem~\ref{thm:upper-well-spec}]
  Let $p \in (1, 2]$ which will be determined later, and denote $q := p / (p-1)$ its complement.
Applying Lemma~\ref{lem:trace-inverse-approx} to $A = \wt \Sigma_n$ yields:
\begin{equation*}
  \tr (\wt \Sigma_n^{-1}) + \tr (\wt \Sigma_n) - 2 d
  \leq \max (1, \lamin(\wt \Sigma_n)^{-1}) \cdot \tr \big( | \wt \Sigma_n - I_d |^{2/p} \big)
  \, .
\end{equation*}
Since $\E [ \tr (\wt \Sigma_n)] = d$, taking the expectation in the above bound and dividing by $d$ yields:
\begin{align}
  \frac{1}{d} \E \big[ \tr (\wt \Sigma_n^{-1}) \big] - 1
  &\leq \E \Big[ \max (1, \lamin(\wt \Sigma_n)^{-1}) \cdot \frac{1}{d} \tr \big( | \wt \Sigma_n - I_d |^{2/p} \big) \Big] \nonumber \\
  &\leq \E \big[ \max (1, \lamin(\wt \Sigma_n)^{-1})^q \big]^{1/q} \cdot \E \Big[ \Big( \frac{1}{d} \tr \big( | \wt \Sigma_n - I_d |^{2/p} \big) \Big)^p \Big]^{1/p} \label{eq:proof-trace-inverse-holder} \\
  &\leq \E \big[ \max (1, \lamin(\wt \Sigma_n)^{-q} ) \big]^{1/q} \cdot \E \Big[ \frac{1}{d} \tr \big( ( \wt \Sigma_n - I_d )^{2} \big) \Big]^{1/p} \label{eq:proof-trace-inverse-jensen}
\end{align}
where~\eqref{eq:proof-trace-inverse-holder} comes from Hölder's inequality, while~\eqref{eq:proof-trace-inverse-jensen} is obtained by noting that $x \mapsto x^p$ is convex and that $(1/d) \tr (A)$ is the average of the eigenvalues of the symmetric matrix $A$.
Next, 
\begin{align}
  \E \Big[ \frac{1}{d} \tr \big( (\wt \Sigma_n - I_d)^2 \big) \Big]
  &= \frac{1}{d} \tr \bigg\{ \E \bigg[ \bigg( \frac{1}{n} \sum_{i=1}^n ( \wt X_i \wt X_i^\top - I_d) \bigg)^2 \bigg] \bigg\} \nonumber \\
  &= \frac{1}{n^2 d} \tr \bigg\{ \sum_{1 \leq i, j \leq n} \E \big[ (\wt X_i \wt X_i^\top - I_d) (\wt X_j \wt X_j^\top - I_d) \big] \bigg\} \nonumber \\
  &= \frac{1}{n d} \tr \left\{ \E \big[ (\wt X \wt X^\top - I_d)^2 \big] \right\} \, ,
    \label{eq:proof-trace-inverse-cancels} 
\end{align}
where we used in~\eqref{eq:proof-trace-inverse-cancels} the fact that, for $i \neq j$, $\E \big[ (\wt X_i \wt X_i^\top  - I_d) (\wt X_j \wt X_j^\top  - I_d) \big] = \E [ \wt X_i \wt X_i^\top - I_d ] \E [ \wt X_j \wt X_j^\top - I_d ] = 0$.
Now,
for $x \in \R^d$,
\begin{equation*}
  \tr \{ (x x^\top - I_d)^2 \}
  = \tr \{ \| x \|^2 x x^\top - 2 x x^\top + I_d \}
  = \| x \|^4 - 2 \| x \|^2 + d
  \, ,
\end{equation*}
so that~\eqref{eq:proof-trace-inverse-cancels} becomes, as $\E [ \| \wt X \|^2] = d$ and $\E [ \| \wt X \|^4 ] \leq \kappa d^2$ (Assumption~\ref{ass:fourth-moment-norm}),
\begin{equation}
  \label{eq:proof-trace-inverse-fourth-final}
  \E \Big[ \frac{1}{d} \tr \big( (\wt \Sigma_n - I_d)^2 \big) \Big]
  = \frac{1}{n d} \Big( \E \| \wt X \|^4
    - 2 \E \| \wt X\|^2 + d \Big) %
  = \frac{1}{n} \Big( \frac{1}{d} \E \| \wt X \|^4 - 1 \Big)
  \leq \frac{\kappa d}{n}
  .
\end{equation}
In addition, recall that $\wt X$ satisfies Assumption~\ref{ass:small-ball} and that $n \geq \max (6d/\alpha, 12/\alpha)$. 
Hence, letting $C' \geq 1$ be the constant in Theorem~\ref{thm:lower-tail-covariance}, we have by Corollary~\ref{cor:negative-moments-smallest}:
\begin{equation}
  \label{eq:proof-inverse-trace-lamin-final}
  \E \big[ \max (1, \lamin (\wt \Sigma_n)^{-q}) \big]
  \leq 2 C'^q
  \, .
\end{equation}

Finally, plugging the bounds~\eqref{eq:proof-trace-inverse-fourth-final} and~\eqref{eq:proof-inverse-trace-lamin-final} into~\eqref{eq:proof-trace-inverse-jensen} and
letting $q = \alpha' n/2$, so that $1/p = 1 - 1/q = 1 - 2/(\alpha' n)$, we obtain
\begin{equation}
  \label{eq:proof-trace-inverse-final-upto-cst}
  \frac{1}{d} \cdot \E \big[ \tr (\wt \Sigma_n^{-1}) \big] - 1
  \leq (2 C'^q)^{1/q} \cdot \Big( \frac{\kappa d}{n} \Big)^{1/p}
  \leq 2 C' \cdot \frac{\kappa d}{n} \cdot \Big( \frac{n}{\kappa d} \Big)^{2 / (\alpha' n)}
  \, .
\end{equation}
Now, since $\kappa = \E [ \| \wt X \|^4 ] / \E [ \| \wt X \|^2]^2 \geq 1$ and $d \geq 1$,
\begin{equation*}
  \Big( \frac{n}{\kappa d} \Big)^{2/(\alpha' n)}
  \leq n^{2/(\alpha' n)}
  = \exp \Big( \frac{2 \log n}{\alpha' n} \Big)
  \, .
\end{equation*}
An elementary analysis shows that the function $g : x \mapsto \log x / x$ is increasing on $(0, e ]$ and decreasing on $[e, + \infty)$.
Hence, if $x, y > 1$ satisfy $x \geq y \log y \geq e$, then
\begin{equation*}
  \frac{\log x}{x}
  \leq \frac{\log y + \log \log y}{y \log y}
  \leq \frac{1 + e^{-1}}{y}
\end{equation*}
where we used $\log \log y / \log y \leq g (e) = e^{-1} $.
Here by assumption $n \geq 12 \alpha^{-1} \log (12 \alpha^{-1}) = 2 \alpha'^{-1} \log (2 \alpha'^{-1})$, and thus $\log n / n \leq (1 + e^{-1})/ (2 /\alpha')$, so that
\begin{equation*}
  \Big( \frac{n}{\kappa d} \Big)^{2/(\alpha' n)}
  \leq \exp \Big( \frac{2}{\alpha'} \cdot \frac{1 + e^{-1}}{2/\alpha'} \Big)
  = \exp \big( 1 + e^{-1} \big)
  \leq 4
  \, .
\end{equation*}
Plugging this inequality into~\eqref{eq:proof-trace-inverse-final-upto-cst} yields the desired bound~\eqref{eq:expected-inverse-trace}.
Equation~\eqref{eq:upper-bound-minimax} then follows by Theorem~\ref{thm:ls-minimax}.
\end{proof}

\subsection{Proof of Proposition~\ref{prop:upper-misspecified}}
\label{sec:proof-upper-misspecified}

Recall that, by Lemma~\ref{lem:risk-ols-general}, we have
\begin{equation}
  \label{eq:proof-risk-ols-misspecified}
  \E \big[ \excessrisk (\lsb) \big]
  = \E \bigg[ \bigg\| \frac{1}{n} \sum_{i=1}^n m (X_i) \Sigma^{-1/2} X_i \bigg\|_{\wt \Sigma_n^{-2}}^2 \bigg] + \frac{1}{n^2} \E \bigg[ \sum_{i=1}^n \sigma^2 (X_i) \big\| \Sigma^{-1/2} X_i \big\|_{\wt \Sigma_n^{-2}}^2 \bigg]
  \, .
\end{equation}
Now, since $\wt \Sigma_n^{-2} \leq \lamin (\wt \Sigma_n)^{-2} I_d$, we have for every random variable $V_n$:
\begin{align}
  \E \big[ \| V_n \|^2_{\wt \Sigma_n^{-2}} \big]
  &\leq %
      \E \big[ \| V_n \|^2 \big] + \E \big[ \big\{ \lamin (\wt \Sigma_n)^{-2} - 1 \big\}_+ \cdot \| V_n \|^2 \big] \nonumber \\
  &\leq \E \big[ \| V_n \|^2 \big] + \E \big[ \{ \lamin (\wt \Sigma_n)^{-2} - 1 \}_+^2 \big]^{1/2} \cdot \E \big[ \| V_n \|^4 \big]^{1/2} \, ,
    \label{eq:proof-misspecified-cauchy-schwarz}
\end{align}
where~\eqref{eq:proof-misspecified-cauchy-schwarz} follows from the Cauchy-Schwarz inequality.
Letting $V_n = \sigma (X_i) \Sigma^{-1/2} X_i$, we obtain from~\eqref{eq:proof-misspecified-cauchy-schwarz}
  \begin{align}
    \label{eq:proof-misspecified-varterm}
    &\frac{1}{n^2} \E \bigg[ \sum_{i=1}^n \sigma^2 (X_i) \big\| \Sigma^{-1/2} X_i \big\|_{\wt \Sigma_n^{-2}}^2 \bigg] \nonumber \\
    &\leq \frac{1}{n} \E \big[ \sigma^2 (X) \| \Sigma^{-1/2} X \|^2 \big]
      + \frac{1}{n} \E \big[ \{ \lamin (\wt \Sigma_n)^{-2} - 1 \}_+^2 \big]^{1/2} \E \big[ \sigma^4 (X) \| \Sigma^{-1/2} X \|^4 \big]^{1/2}
      \, .
  \end{align}
  On the other hand, let $V_n = n^{-1} \sum_{i=1}^n m (X_i) \Sigma^{-1/2} X_i$; we have, since $\E [ m(X_i) X_i ] = \E [ \eps_i X_i ] = 0$,
  \begin{align}
    \E \big[ \| V_n \|^2 \big]
    &= \E \bigg[ \Big\| \frac{1}{n} \sum_{i=1}^n m (X_i) X_i \Big\|_{\Sigma^{-1}}^2 \bigg] \nonumber \\
    &= \frac{1}{n^2} \sum_{1 \leq i, j \leq n} \E \big[ \left\langle m(X_i) X_i, m (X_j) X_j \right\rangle_{\Sigma^{-1}} \big] \nonumber \\
    &= \frac{1}{n^2} \sum_{i=1}^n \E \big[ m (X_i)^2 \| \Sigma^{-1/2} X_i \|^2 \big] + \frac{1}{n^2} \sum_{i \neq j} \big\langle \E [ m(X_i) X_i ], \E [ m(X_j) X_j ] \big\rangle_{\Sigma^{-1}} \nonumber \\
    &= \frac{1}{n} \E \big[ m (X)^2 \| \Sigma^{-1/2} X \|^2 \big]
      \label{eq:proof-misspecified-approxterm-m2}
      \, .
  \end{align}
  In addition,
  \begin{equation*}
    \E \big[ \| V_n \|^4 \big]
    = \frac{1}{n^4} \sum_{1\leq i,j,k,l \leq n} \E \big[ \left\langle m(X_i) X_i, m (X_j) X_j \right\rangle_{\Sigma^{-1}} \left\langle m(X_k) X_k, m (X_l) X_l \right\rangle_{\Sigma^{-1}} \big]
    .
  \end{equation*}
  Now, by independence and since $\E [ m (X) X] = 0$, each term in the sum above where one index among $i,j,k,l$ is distinct from the others cancels.
  We therefore have
  \begin{align}    
    \E \big[ \big\| V_n \big\|^4 \big]
    &= \frac{1}{n^4} \sum_{i=1}^n \E \big[ \| m (X_i) X_i \|_{\Sigma^{-1}}^4 \big] + \frac{2}{n^4} \sum_{i<j} \E \big[ \| m (X_i) X_i \|_{\Sigma^{-1}}^2 \| m (X_j) X_j \|_{\Sigma^{-1}}^2 \big] +
      \nonumber \\
    &\quad + \frac{4}{n^4} \sum_{1 \leq i < j \leq n} \E \big[ \langle m(X_i) X_i, m (X_j) X_j \rangle_{\Sigma^{-1}}^2 \big] \nonumber \\
    &\leq \frac{1}{n^4} \sum_{i=1}^n \E \big[ \| m (X_i) X_i \|_{\Sigma^{-1}}^4 \big] + \frac{6}{n^4} \sum_{i<j} \E \big[ \| m (X_i) X_i \|_{\Sigma^{-1}}^2 \| m (X_j) X_j \|_{\Sigma^{-1}}^2 \big]
      \label{eq:proof-misspecified-approxterm-cs} \\ 
    &= \frac{1}{n^3} \cdot \E \big[ m (X)^4 \| \Sigma^{-1/2} X \|^4 \big] + \frac{6}{n^4} \cdot \frac{n (n-1)}{2} \cdot \E \big[ m (X)^2 \| X \|_{\Sigma^{-1}}^2 \big]^2 \nonumber \\
    &\leq \frac{1}{n^3} \cdot \E \big[ m (X)^4 \| \Sigma^{-1/2} X \|^4 \big] + \frac{3}{n^2} \cdot \E \big[ m (X)^2 \| \Sigma^{-1/2} X \|^2 \big]^2 \nonumber \\
    &\leq \frac{4}{n^2} \cdot \E \big[ m (X)^4 \| \Sigma^{-1/2} X \|^4 \big]
      \label{eq:proof-misspecified-approxterm-m4-fin}
  \end{align}
  where~\eqref{eq:proof-misspecified-approxterm-cs} and~\eqref{eq:proof-misspecified-approxterm-m4-fin} rely on the Cauchy-Schwarz inequality.
  Hence, it follows from~\eqref{eq:proof-misspecified-cauchy-schwarz},~\eqref{eq:proof-misspecified-approxterm-m2} and~\eqref{eq:proof-misspecified-approxterm-m4-fin} that
  \begin{align}
    &\E \bigg[ \bigg\| \frac{1}{n} \sum_{i=1}^n m (X_i) \Sigma^{-1/2} X_i \bigg\|_{\wt \Sigma_n^{-2}}^2 \bigg]
      \nonumber \\
    &\leq \frac{1}{n} \E \big[ m (X)^2 \| \Sigma^{-1/2} X \|^2 \big] +
      \E \big[ \{ \lamin (\wt \Sigma_n)^{-2} - 1 \}_+^2 \big]^{1/2} \cdot \Big( \frac{4}{n^2} \cdot \E \big[ m (X)^4 \| \Sigma^{-1/2} X \|^4 \big] \Big)^{1/2} \nonumber \\
    &\leq \frac{1}{n} \E \big[ m (X)^2 \| \Sigma^{-1/2} X \|^2 \big] +
      \frac{2}{n} \E \big[ \{ \lamin (\wt \Sigma_n)^{-2} - 1 \}_+^2 \big]^{1/2}  \E \big[ m (X)^4 \| \Sigma^{-1/2} X \|^4 \big]^{1/2}
      \, .
      \label{eq:proof-misspecified-approxterm-fin}
  \end{align}
  Plugging~\eqref{eq:proof-misspecified-varterm} and~\eqref{eq:proof-misspecified-approxterm-fin} into the decomposition~\eqref{eq:proof-risk-ols-misspecified} yields:
  \begin{align}
    \label{eq:eq:proof-misspecified-risk-dec}
    \E \big[ \excessrisk (\lsb) \big]
    &\leq \frac{1}{n} \E \big[ \big( m (X)^2 + \sigma^2 (X) \big) \| \Sigma^{-1/2} X \|^2 \big] \nonumber
    + \frac{1}{n} \E \big[ \{ \lamin (\wt \Sigma_n)^{-2} - 1 \}_+^2 \big]^{1/2} \times \\
    &\quad \times \Big( \E \big[ \sigma^4 (X) \| \Sigma^{-1/2} X \|^4 \big]^{1/2} + 2 \E \big[ m (X)^4 \| \Sigma^{-1/2} X \|^4 \big]^{1/2} \Big)
  \end{align}

\paragraph{Oliveira's lower tail bound.}

\cite{oliveira2016covariance} showed that,
under Assumption~\ref{ass:equiv-l4-l2}, 
we have
\begin{equation*}
  \P \big( \lamin (\wh \Sigma_n) \geq 1 - \eps \big) \geq 1 - \delta
\end{equation*}
provided that
\begin{equation*}
  n \geq \frac{81 \kappa ( d + 2 \log (2/\delta))}{\eps^2}
  \, .
\end{equation*}
This can be rewritten as:
\begin{equation}
  \label{eq:proof-bound-oliveira}
  \P \bigg( \lamin (\wh \Sigma_n) < 1 - 9 \kappa^{1/2} \sqrt{\frac{d + 2 \log (2/\delta)}{n}} \bigg)
  \leq \delta
  \, .
\end{equation}

\paragraph{Bound on the remaining term.}

Since the function $x \mapsto x^2$ is $2$-Lipschitz on $[0, 1]$, we have $(x^{-2} - 1)_+ = (1 - x^2)_+ / x^2 \leq 2 (1 - x)_+ / x^2$ for $x > 0$, so that by Cauchy-Schwarz,
\begin{align}
  \label{eq:proof-misspecified-lamin}
  \E \big[ \{ \lamin (\wh \Sigma_n)^{-2} - 1 \}_+^2 \big]^{1/2}
  &\leq \E \Big[ \frac{4 \{ 1 - \lamin (\wh \Sigma_n) \}_+^2}{\lamin (\wh \Sigma_n)^4} \Big]^{1/2} \nonumber \\
  &\leq 2 \E \big[ \{ 1 - \lamin (\wh \Sigma_n) \}_+^4 \big]^{1/4} \E \big[ \lamin (\wh \Sigma_n)^{-8} \big]^{1/4}
    \, .
\end{align}
First, note that
\begin{align}
  \E \big[ \{ 1 - \lamin (\wh \Sigma_n) \}_+^4 \big]
  &= \int_{0}^\infty \P \big( \{ 1 - \lamin (\wh \Sigma_n) \}_+^4 \geq u \big) \di u \nonumber \\
  &= \int_{0}^1 \P \big( \lamin (\wh \Sigma_n) \leq 1 - u^{1/4} \big) \di u \nonumber \\
  &= \int_{0}^1 \P \big( \lamin (\wh \Sigma_n) \leq 1 - v^{1/2} \big) 2 v \di v
    \, .
    \label{eq:proof-misspecified-lamin-2}
\end{align}
Now, let $v^{1/2} = 9 \kappa^{1/2} \sqrt{[ d + 2 \log (2/\delta) ] / n}$, so that the bound~\eqref{eq:proof-bound-oliveira}
yields $\P (\lamin (\wh \Sigma_n) \leq 1 - v^{1/2}) \leq \delta$.
We have, equivalently,
\begin{equation*}
  \delta
  = 2 \exp \Big( - \frac{n}{162 \kappa} \Big( v - \frac{81 \kappa d}{n} \Big) \Big)
  \leq 2 \exp \Big( - \frac{n}{324 \kappa} v \Big)
\end{equation*}
as long as $v \geq 162 \kappa d / n$.
Plugging this inequality into~\eqref{eq:proof-misspecified-lamin-2} yields
\begin{align*}
  \E \big[ \{ 1 - \lamin (\wh \Sigma_n) \}_+^4 \big]
  &\leq \int_0^{\min({162 \kappa d}/{n}, 1)} 2 v \di v + \int_{\min(162 \kappa d / n, 1)}^1 2 \exp \Big( - \frac{n}{324 \kappa} v \Big) 2 v \di v \\
  &\leq \Big( \frac{162 \kappa d}{n} \Big)^2 + \Big( \frac{324 \kappa}{n} \Big)^2 \int_0^\infty 4 \exp (- w) w \di w \\
  &= \Big( \frac{162 \kappa d}{n} \Big)^2 + 4 \Big( \frac{324 \kappa}{n} \Big)^2
\end{align*}
so that, using the inequality $(x+y)^{1/4} \leq x^{1/4} + y^{1/4}$,
\begin{equation}
  \label{eq:proof-misspecified-lamin-3}
  \E \big[ \{ 1 - \lamin (\wh \Sigma_n) \}_+^4 \big]^{1/4}
  \leq 9 \sqrt{\frac{2 \kappa d}{n}} + 18 \sqrt{\frac{2 \kappa}{n}}
  \leq 27 \sqrt{\frac{2 \kappa d}{n}}
  \, .
\end{equation}
Also, by Corollary~\ref{cor:negative-moments-smallest} %
and the fact that $\alpha n /12 \geq 8$, 
$\E [ \lamin (\wh \Sigma_n)^{-8} ] \leq 2 C'^8$, so that inequality~\eqref{eq:proof-misspecified-lamin} becomes
\begin{equation}
  \label{eq:proof-misspecified-lamin-4}
  \E \big[ \{ \lamin (\wh \Sigma_n)^{-2} - 1 \}_+^2 \big]^{1/2}
  \leq 2 \times 27 \sqrt{\frac{2 \kappa d}{n}} \times 2^{1/4} C'^2
  \leq 92 C'^2 \sqrt{\frac{\kappa d}{n}}
  \, .
\end{equation}

\paragraph{Final bound.}

Now, let $\chi > 0$ as in Proposition~\ref{prop:upper-misspecified}.
Since
\begin{equation*}
  \E [ \eps^2 \cond X ]
  = m (X)^2 + \sigma^2 (X)
  \geq \max (m (X)^2, \sigma^2 (X))
  \, ,
\end{equation*}
we have
\begin{align}
  \label{eq:kurtosis-noise}
  &\max \Big( \E \big[ m (X)^4 \| \Sigma^{-1/2} X \|^4 \big], \E \big[ \sigma^4 (X) \| \Sigma^{-1/2} X \|^4 \big] \Big) \nonumber \\
  &\leq \E [ \E [ \eps^2 \cond X ]^2 \| \Sigma^{-1/2} X \|^4 ]
  = {\chi d^2}
  \, .
\end{align}
Putting the bounds~\eqref{eq:proof-misspecified-lamin-4} and~\eqref{eq:kurtosis-noise} inside~\eqref{eq:eq:proof-misspecified-risk-dec} yields
\begin{align}
  \label{eq:proof-misspecified-gen}
  \E \big[ \excessrisk (\lsb) \big]
  &\leq \frac{1}{n} \E \big[ \big( m (X)^2 + \sigma^2 (X) \big) \| \Sigma^{-1/2} X \|^2 \big]
  + \frac{1}{n} \cdot 92 C'^2 \sqrt{\frac{\kappa d}{n}} \cdot 3 \sqrt{\chi} d \nonumber \\
  &= \frac{1}{n} \E \big[ (Y - \langle \beta^*, X\rangle)^2 \| \Sigma^{-1/2} X \|^2 \big] + 276 C'^2 \sqrt{\kappa \chi} \Big( \frac{d}{n} \Big)^{3/2}
    \, ,
\end{align}
where we used the fact that $\E [ (Y - \langle \beta^*, X\rangle)^2 \cond X ] = m (X)^2 + \sigma^2 (X) $.
This establishes~\eqref{eq:upper-misspecified}.
Finally, if $P \in \misclass (P_X, \sigma^2)$, then
$\E [ \eps^2 \cond X ] \leq \sigma^2$, so that
\begin{equation*}
  \chi 
  = \E [ \E [ \eps^2 \cond X]^2 \| \Sigma^{-1/2} X \|^4 ] / d^2
  \leq \sigma^4 \E [ \| \Sigma^{-1/2} X \|^4 ] / d^2
  \leq \sigma^4 \kappa
  \, ,
\end{equation*}
where we used the fact that $\E [ \| \Sigma^{-1/2} X \|^4 ] \leq \kappa d^2$ by Assumption~\ref{ass:equiv-l4-l2} (see Remark~\ref{rem:fourth-moment-assumptions}).
Plugging this inequality, together with $\E [ (Y - \langle \beta^*, X\rangle)^2 \| \Sigma^{-1/2} X \|^2 ] \leq \sigma^2 d$, inside~\eqref{eq:proof-misspecified-gen}, yields the upper bound~\eqref{eq:upper-misspecified-minimax}.
This concludes the proof.

\section{Remaining proofs from Section~\ref{sec:small-ball-bounds}}
\label{sec:remaining-proofs}

In this section, we gather the proofs of remaining results from Section~\ref{sec:small-ball-bounds}, namely Proposition~\ref{prop:small-ball-lower} and Corollary~\ref{cor:negative-moments-smallest}.

\subsection{Proof of Proposition~\ref{prop:small-ball-lower}}
\label{sec:proof-lower-lower}

  Let $\Theta$ be a random variable distributed uniformly on the unit sphere $S^{d-1}$ and independent of $X$.
  We have %
  \begin{align*}
    \sup_{\theta \in S^{d-1}} \P ( | \langle \theta, X\rangle | \leq t )
    &\geq \E %
      \big[ \P ( | \langle \Theta, X\rangle | \leq t \cond \Theta ) \big] %
    = \E \big[ \P %
    ( | \langle \Theta, X\rangle | \leq t \cond X ) \big]
    \, .
  \end{align*}
  Next, note that %
  for every $x \in \R^d$, $\langle \Theta, x\rangle$ is distributed as $\| x \| \cdot \Theta_1$, where $\Theta_1$ denotes the first coordinate of $\Theta$.
  Since $X$ is independent of $\Theta$, the above inequality becomes
  \begin{equation}
    \label{eq:proof_lower_small_ball-1}
    \sup_{\theta \in S^{d-1}} \P ( | \langle \theta, X\rangle | \leq t )
    \geq \E \bigg[ \P \bigg( | \Theta_1 | \leq \frac{t}{\| X \|} \Big| X \bigg) \bigg]
    \, .
  \end{equation}
  Now, since $\E [ \| X \|^2] = \tr ( \E [X X^\top] ) = d$, Markov's inequality implies that $\P ( \| X \| \geq 2 \sqrt{d}) \leq \E [ \| X \|^2 ] / (4d) \leq 1/4$.
  Since $r \mapsto \P_{\theta} ( | \theta_1 | \leq t / r )$ is non-increasing, plugging this into~\eqref{eq:proof_lower_small_ball-1} yields
  \begin{equation}
    \label{eq:proof_lower_small_ball-2}
    \sup_{\theta \in S^{d-1}} \P ( | \langle \theta, X\rangle | \leq t )
    \geq \frac{3}{4} \cdot \P \Big( | \Theta_1 | \leq \frac{t}{2 \sqrt{d}} \Big)
    \, .
  \end{equation}

  Let us now derive the distribution of $|\Theta_1|$.
  Let $\phi: S^{d-1} \to \R$ be the projection on the first coordinate: $\phi (\theta) = \theta_1$ for $\theta \in S^{d-1}$.
  Note that for $u \in [-1, 1]$, $\phi^{-1} (u) = \{ u \} \times ( \sqrt{1 - u^2} \cdot S^{d-2} )$ which is isometric to $\sqrt{1 - u^2} \cdot S^{d-2}$ and hence has $(d-2)$-dimensional Hausdorff measure $C_d (1 - u^2)^{(d-2)/2}$ for some constant $C_d$.
  In addition, since $\phi (\theta) = \langle e_1, \theta\rangle$ (where $e_1 = (1, 0,  \dots, 0)$), $\nabla \phi (\theta) \in (\R \theta)^\perp$ is the orthogonal projection of $e_1$ on $(\R \theta)^\perp$, namely $e_1 - \theta_1 \theta$, with norm $\| \nabla \phi (\theta) \| = \sqrt{1 - \theta_1^2}$.
  Fix $t \in (0,1]$ and define $g (\theta) = \indic{| \theta_1 | \leq t} / \sqrt{1 - \theta_1^2}$, which equals $\indic{|u| \leq t} /\sqrt{1 - u^2}$ on $\phi^{-1} (u)$ (for $u \in (-1, 1)$), and such that $g (\theta) \cdot \| \nabla \phi (\theta) \| = \indic{|\theta_1| \leq t}$.
  Hence, the coarea formula~\cite[Theorem~3.2.2]{federer1996geometric} implies that, for every $t\in (0, 1]$,
  \begin{align}
    \label{eq:density-marginal-uniform-coarea}
    \P ( | \Theta_1 | \leq t )
    &= \int_{S^{d-1}} g (\theta) \| \nabla \phi (\theta) \| \pi (\di \theta)
    = \int_{-1}^1 \frac{\indic{|u| \leq t}}{\sqrt{1 - u^2}} \times C_d (1 - u^2)^{(d-2)/2} \di u \nonumber \\
     &= 2  C_d \int_{0}^t ( 1 - u^2 )^{(d-3)/2} \di u
       \, .
  \end{align}
  If $d=2$,~\eqref{eq:density-marginal-uniform-coarea} implies that $| \Theta_1 |$ has density $(2/\pi) / \sqrt{1 - t^2} \geq 2/\pi$ on $[0, 1]$, and hence %
  for $t \in [0, 1]$:
  \begin{equation}
    \label{eq:proof-lower-smallball-d2}
    \P \Big( | \Theta_1 | \leq \frac{t}{2 \sqrt{d}} \Big)
    \geq \frac{2}{\pi} \times \frac{t}{2 \sqrt{2}}
    \, .
  \end{equation}
  If $d=3$,~\eqref{eq:density-marginal-uniform-coarea} implies that $| \Theta_1 |$ is uniformly distributed on $[0, 1]$, so that for $t \in [0, 1]$ %
  \begin{equation}
    \label{eq:proof-lower-smallball-d3}
    \P \Big( | \Theta_1 | \leq \frac{t}{2 \sqrt{d}} \Big)
    = \frac{t}{2 \sqrt{3}}
    \, .
  \end{equation}
  Now, assume that $d \geq 4$.
  Letting $t=1$ in~\eqref{eq:density-marginal-uniform-coarea} yields the value of the constant $C_d$, which normalizes the right-hand side: since $1-u^2 \leq e^{-u^2}$, 
  \begin{align*}    
    (2 C_d)^{-1}
    &= \int_0^1 (1 - u^2)^{(d-3)/2} \di u 
      \leq \int_0^1 e^{- (d-3) u^2 /2} \di u \\
    &\leq \frac{1}{\sqrt{d-3}} \int_0^{\sqrt{d-3}} e^{-u^2/2} \di u
      \leq \frac{1}{\sqrt{d-3}} \times \sqrt{\frac{\pi}{2}}
      \, ,
  \end{align*}
  so that $2 C_d \geq \sqrt{2(d-3) / \pi}$.
  Finally, if $u \leq 1/(2 \sqrt{d})$, then
  \begin{equation*}
    \left( 1 - u^2 \right)^{(d-3)/2}
    \geq \left( 1 - \frac{1}{4 d} \right)^{d/2}
    \geq \left( 1 - \frac{1}{16} \right)^{2}
    \, ,
  \end{equation*}
  using the fact that $4d \geq 16$ and that the function $x \mapsto (1 - 1/x)^{x/8}$ is increasing on $(1, + \infty)$.
  Plugging the above lower bounds in~\eqref{eq:density-marginal-uniform-coarea} shows that, for $t \leq 1$,
  \begin{equation}
    \label{eq:proof-lower-small-d4}
    \P \Big( | \Theta_1 | \leq \frac{t}{2 \sqrt{d}} \Big)
    = 2  C_d \int_{0}^{t/(2 \sqrt{d})} ( 1 - u^2 )^{(d-3)/2} \di u
    \geq \sqrt{\frac{2 (d-3)}{\pi}} \times \Big( \frac{15}{16} \Big)^2 \frac{t}{2 \sqrt{d}}
    \geq \frac{t}{3}
  \end{equation}
  where the last inequality is obtained by noting that $(d-3) / d \geq 1/4$ for $d \geq 4$ and lower bounding the resulting constant.
  The bounds~\eqref{eq:proof-lower-smallball-d2}, \eqref{eq:proof-lower-smallball-d3} and \eqref{eq:proof-lower-small-d4} imply that, for every $d \geq 2$ and $t \leq 1$,
  \begin{equation}
    \label{eq:proof-lower-smallball-d}
    \P \Big( | \Theta_1 | \leq \frac{t}{2 \sqrt{d}} \Big)
    \geq \frac{t}{\pi \sqrt{2}}
    \, .
  \end{equation}
  The first inequality of Proposition~\ref{prop:small-ball-lower} follows by combining inequalities~\eqref{eq:proof_lower_small_ball-2} and~\eqref{eq:proof-lower-smallball-d}.
  The second inequality~\eqref{eq:small-ball-lower-2} is a consequence of the first by Lemma~\ref{lem:smallest-eigenvalue-min}. %

\subsection{Proof of Corollary~\ref{cor:negative-moments-smallest}}
\label{sec:proof-corollary-moments}

Corollary~\ref{cor:negative-moments-smallest} directly follows from Theorem~\ref{thm:lower-tail-covariance}, Proposition~\ref{prop:small-ball-lower} and Lemma~\ref{lem:negative-moments} below.

\begin{lemma}
  \label{lem:negative-moments}
  Let $Z$ be a nonnegative real variable.
  \begin{enumerate}
  \item If there exist some constants $C \geq 1$ and $a \geq 2$ such that $\P (Z \leq t) \leq (C t)^a$ for all $t > 0$, then $\| Z^{-1} \|_{L^q} \leq \| \max (1, Z^{-1}) \|_{L^q} \leq 2^{1/q} C \leq 2 C$ for all $1 \leq q \leq a/2$.
  \item Conversely, if $\| Z^{-1} \|_{L^q} \leq C$ for some constants $q \geq 1$ and $C > 0$, then $\P (Z \leq t) \leq (C t)^{q}$ for all $t > 0$.
  \item Finally, if there exist constants $c, a > 0$ such that $\P (Z \leq t) \geq (c t )^a$ for all $t \in (0, 1)$, then $\| Z^{-1} \|_{L^q} = + \infty$ for $q \geq a$.
  \end{enumerate}
\end{lemma}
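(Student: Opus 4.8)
The plan is to obtain all three statements from the layer-cake (tail-integration) identity $\E[g(Z)] = \int_0^\infty \P(g(Z) \geq s)\,\di s$ valid for any nonnegative functional $g(Z)$, supplemented by Markov's inequality for the converse direction. Note first that each hypothesis forces $Z > 0$ almost surely, so the random variables $Z^{-1}$ and $\max(1, Z^{-1})$ are well defined.

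For the first point, I would set $W := \max(1, Z^{-1}) \geq 1$. For $s \geq 1$ one has $\{W^q \geq s\} = \{Z^{-1} \geq s^{1/q}\} = \{Z \leq s^{-1/q}\}$, so the hypothesis $\P(Z \leq t) \leq (Ct)^a$ gives $\P(W^q \geq s) \leq C^a s^{-a/q}$, while for $s \leq 1$ we simply bound $\P(W^q \geq s) \leq 1$. Splitting $\E[W^q] = \int_0^\infty \P(W^q \geq s)\,\di s$ at the threshold $s = C^q$ (which is $\geq 1$ since $C \geq 1$, and at which $C^a s^{-a/q} = 1$), we get
\[
  \E[W^q] \leq \int_0^{C^q} 1 \, \di s + C^a \int_{C^q}^\infty s^{-a/q}\,\di s = C^q + \frac{C^q}{a/q - 1}.
\]
The assumption $q \leq a/2$ is exactly what yields $a/q - 1 \geq 1$, hence $\E[W^q] \leq 2C^q$, i.e.\ $\|\max(1,Z^{-1})\|_{L^q} \leq 2^{1/q}C \leq 2C$; the bound for $\|Z^{-1}\|_{L^q}$ then follows from $Z^{-1} \leq W$ pointwise.

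The second point is immediate: apply Markov's inequality to $Z^{-q}$ to get $\P(Z \leq t) = \P(Z^{-q} \geq t^{-q}) \leq t^q\,\E[Z^{-q}] = (t\,\|Z^{-1}\|_{L^q})^q \leq (Ct)^q$. For the third point, the same layer-cake formula gives $\E[Z^{-q}] \geq \int_1^\infty \P(Z \leq s^{-1/q})\,\di s$; since $s^{-1/q} \in (0,1]$ for $s \geq 1$, the lower-bound hypothesis yields $\P(Z \leq s^{-1/q}) \geq c^a s^{-a/q}$ for all sufficiently large $s$ (large enough that $c\,s^{-1/q} < 1$), and because $q \geq a$ forces $a/q \leq 1$, the tail integral $\int^\infty s^{-a/q}\,\di s$ diverges, whence $\E[Z^{-q}] = +\infty$ and $\|Z^{-1}\|_{L^q} = +\infty$.

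The only mildly delicate point is keeping the constant in the first part sharp enough to land exactly at $2^{1/q}C$; this is precisely what the $q \leq a/2$ hypothesis buys, by making the contribution of the tail region $\{s \geq C^q\}$ no larger than that of the head region $\{s \leq C^q\}$. Everything else is routine bookkeeping with the integrals.
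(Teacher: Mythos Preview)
Your proof is correct and follows essentially the same approach as the paper's: both use the layer-cake formula for $\E[\max(1,Z^{-1})^q]$ split at $s=C^q$, Markov's inequality for the converse, and divergence of $\int^\infty s^{-a/q}\,\di s$ for the third point. The only cosmetic difference is that you evaluate the tail integral as $C^q/(a/q-1)$ explicitly, whereas the paper changes variables and bounds $\int_1^\infty v^{-a/q}\,\di v \leq \int_1^\infty v^{-2}\,\di v = 1$; both routes give the same $2C^q$.
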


\begin{proof}
  For the first point, since $\max (1, Z^{-q})$ is nonnegative, we have 
  \begin{equation*}
    \E [ \max (1, Z^{-q}) ]
    = \int_0^{\infty} \P (\max (1, Z^{-q}) \geq u) \, \di u
    = \int_0^{\infty} \P (\min (1, Z) \leq u^{-1/q}) \, \di u
    \, .
  \end{equation*}
  For $u \leq C^q$, we bound $\P (\min (1, Z) \leq u^{-1/q}) \leq 1$, while for $u \geq C^q$ (so that $u^{-1/q} \leq C^{-1} \leq 1$), we bound $\P (\min (1, Z) \leq u^{-1/q}) = \P (Z \leq u^{-1/q}) \leq (C u^{-1/q})^a$.
  We then conclude that
  \begin{align*}
    \| \max (1, Z^{-1}) \|_{L^q}^q
    \leq C^q + \int_{C^q}^{\infty} (C^{-q} u)^{- a / q} \di u
    = C^q \bigg[ 1 + \int_1^{\infty} v^{-a/q} \di v \bigg]
    \leq 2 C^q
    \, ,
  \end{align*}
  where we let $v = C^{-q} u$ and used the fact that $\int_1^{\infty} v^{-a/q} \di v \leq \int_1^\infty v^{-2} \di v = 1 $ since $q \leq a/2$.
  The second point follows from Markov's inequality: for every $t > 0$,
  \begin{equation*}
    \P (Z \leq t)
    = \P (Z^{-q} \geq t^{-q})
    \leq t^q \cdot \E [Z^{-q}]
    \leq (C t)^q
    \, .
  \end{equation*}
  Finally, for the third point, since $\P (Z \leq u^{-1/q}) \geq (c u^{-1/q})^a$ for $u > 1$, we have for $q \geq a$:
  \begin{equation*}
    \E [ Z^{-q} ]
    = \int_0^{\infty} \P (Z \leq u^{-1/q}) \di u
    \geq \int_1^{\infty} c^a u^{- a / q} \di u
    \geq c^a \int_1^{\infty} u^{-1} \di u
    = + \infty
    \, .
    \qedhere
  \end{equation*}
\end{proof}

\subsection{Proof of Proposition~\ref{prop:uniform-small-ball}}
\label{sec:proof-proposition-independent}

The proof relies on the following lemma.

\begin{lemma}
  \label{lem:uniform-small-ball-general}
  Let $X^1, \dots, X^d$ be independent real random variables.
  Assume that there exists a sub-additive function $g: \R^+ \to \R$ such that, for every $j = 1, \dots, d$ and $\xi \in \R$,
  \begin{equation*}
    | \Phi_{X^j} (\xi) |
    \leq \exp ( - g (\xi^2))
    \, .
  \end{equation*}
  Then, for every $t \in \R$,
  \begin{equation}
    Q_{X} (t)
    \leq t \cdot \int_{-2\pi/t}^{2\pi/t} \exp (- g (\xi^2)) \, \di \xi
    \, .
  \end{equation}
\end{lemma}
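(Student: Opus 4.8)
The plan is to combine a standard Esséen-type smoothing inequality with the observation that the pointwise bound on the characteristic functions of the coordinates is preserved under unit-norm linear combinations, by sub-additivity of $g$.

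First I would reduce to one dimension. Fix $\theta \in S^{d-1}$ and set $Z = \langle \theta, X\rangle = \sum_{j=1}^d \theta^j X^j$. By independence, $\Phi_Z(\xi) = \prod_{j=1}^d \Phi_{X^j}(\theta^j \xi)$, hence $|\Phi_Z(\xi)| \leq \exp(-\sum_{j=1}^d g((\theta^j)^2 \xi^2))$. Since $g$ is sub-additive on $\R^+$, iterating sub-additivity gives $\sum_{j=1}^d g((\theta^j)^2 \xi^2) \geq g(\sum_{j=1}^d (\theta^j)^2 \xi^2) = g(\xi^2)$, because $\sum_j (\theta^j)^2 = 1$. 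Therefore $|\Phi_Z(\xi)| \leq \exp(-g(\xi^2))$ for every $\xi \in \R$, uniformly over $\theta \in S^{d-1}$. As the claimed right-hand side does not depend on $\theta$, it will be enough to establish $Q_Z(t) \leq t \int_{-2\pi/t}^{2\pi/t} |\Phi_Z(\xi)| \, \di\xi$ for an arbitrary real random variable $Z$, and then substitute this bound and take the supremum over $\theta$.

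Next I would prove the smoothing inequality using the nonnegative bump $w(x) = (\sin(\pi x/2)/(\pi x/2))^2$. Its Fourier transform $\widehat w(\xi) = \frac{2}{\pi}(\pi - |\xi|)_+$ is nonnegative, supported on $[-\pi, \pi]$, and satisfies $\|\widehat w\|_\infty = \widehat w(0) = \int_\R w = 2$; moreover $\min_{x \in [-1,1]} w(x) = w(1) = 4/\pi^2$. For any $a \in \R$ and $t > 0$, on $\{|Z - a| \leq t\}$ one has $w((Z-a)/t) \geq 4/\pi^2$ while $w \geq 0$ everywhere, so $\P(|Z - a| \leq t) \leq \frac{\pi^2}{4} \E[w((Z-a)/t)]$. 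By Fourier inversion and Fubini, $\E[w((Z-a)/t)] = \frac{1}{2\pi} \int_{-\pi}^{\pi} \widehat w(\xi)\, e^{-i\xi a/t}\, \Phi_Z(\xi/t) \, \di\xi$, whose modulus is at most $\frac{\|\widehat w\|_\infty}{2\pi} \int_{-\pi}^{\pi} |\Phi_Z(\xi/t)| \, \di\xi = \frac{t}{\pi} \int_{-\pi/t}^{\pi/t} |\Phi_Z(\eta)| \, \di\eta$ after the change of variables $\eta = \xi/t$. Combining, $\P(|Z - a| \leq t) \leq \frac{\pi t}{4} \int_{-\pi/t}^{\pi/t} |\Phi_Z(\eta)| \, \di\eta \leq t \int_{-2\pi/t}^{2\pi/t} |\Phi_Z(\eta)| \, \di\eta$, since $\pi/4 < 1$ and the integrand is nonnegative; taking the supremum over $a$ gives the one-dimensional bound.

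Finally, combining the two steps, for each $\theta \in S^{d-1}$ one gets $Q_{\langle\theta,X\rangle}(t) \leq t \int_{-2\pi/t}^{2\pi/t} |\Phi_{\langle\theta,X\rangle}(\xi)| \, \di\xi \leq t \int_{-2\pi/t}^{2\pi/t} e^{-g(\xi^2)} \, \di\xi$, and taking the supremum over $\theta$ yields the stated bound on $Q_X(t)$. I do not expect a genuine obstacle here: the sub-additivity step is the conceptual point and is short, and the rest is the classical smoothing computation. The only thing requiring a little care is verifying that this particular bump produces a constant ($\pi/4$) below $1$ and an integration window ($[-\pi/t, \pi/t]$) contained in $[-2\pi/t, 2\pi/t]$, so that one may relax to exactly the prefactor $t$ and window $[-2\pi/t,2\pi/t]$ stated in the lemma; a cruder choice of bump would leave spurious constants.
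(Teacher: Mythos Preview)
Your proof is correct and follows the same route as the paper: both use independence and sub-additivity of $g$ to bound $|\Phi_{\langle \theta, X\rangle}(\xi)| \leq e^{-g(\xi^2)}$ uniformly in $\theta \in S^{d-1}$, then apply Esséen's inequality $Q_Z(t) \leq t \int_{-2\pi/t}^{2\pi/t} |\Phi_Z(\xi)|\,\di\xi$. The only difference is that the paper cites Esséen's inequality as a black box, whereas you supply a self-contained derivation via the Fej\'er-type bump $w(x) = (\sin(\pi x/2)/(\pi x/2))^2$; your computation of the constants ($\pi/4 < 1$, window $[-\pi/t,\pi/t] \subset [-2\pi/t,2\pi/t]$) is clean and yields exactly the stated bound.
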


\begin{proof}[Proof of Lemma~\ref{lem:uniform-small-ball-general}]
  For every $\theta \in S^{d-1}$ and $\xi \in \R$, we have, by independence of the $X^j$,
  \begin{align*}
    | \Phi_{\langle \theta, X\rangle} (\xi) |
    &= \big| \E \big[ e^{i \xi (\theta_1 X^1 + \dots + \theta_d X^d)} \big] \big|
      = \big| \E \big[ e^{i \xi \theta_1 X^1} \big] \big| \cdots \big| \E \big[ e^{i \xi \theta_d X^d} \big] \big| \\
    &\leq \exp \big[- \big( g (\theta_1^2 \xi^2) + \dots + g (\theta_d^2 \xi^2) \big) \big]
      \leq \exp ( - g(\xi^2))
      \, ,
  \end{align*}
  where the last inequality uses the sub-additivity of $g$ and the fact that $\theta_1^2 + \dots + \theta_d^2 = \| \theta \|^2 = 1$.
  Lemma~\ref{lem:uniform-small-ball-general} then follows from Esséen's inequality \cite{esseen1966kolmogorov}, which states that for any real random variable~$Z$,
  \begin{equation*}
    Q_Z (t) %
    \leq t \cdot \int_{- 2 \pi/ t}^{2 \pi / t} | \Phi_Z (\xi) | \, \di \xi
    \, .
    \qedhere
  \end{equation*}
\end{proof}

\begin{proof}[Proof of Proposition~\ref{prop:uniform-small-ball}]
  The functions $g_1 : u \mapsto \alpha \log (1 + u)$ and $g_2 : u \mapsto C_0^{-1} \sqrt{u}$ are concave functions on $\R^+$ taking the value $0$ at $0$, and therefore sub-additive.
  Since $g_1$ is also increasing, the function $g : u \mapsto g_1 \circ g_2 (u) = \alpha \log (1 + C_0^{-1} \sqrt{u})$ is also sub-additive.
  Condition~\eqref{eq:bound-characteristic} simply writes $\Phi_{X^j} (\xi) \leq \exp (- g (\xi^2))$, so that by Lemma~\ref{lem:uniform-small-ball-general}
  \begin{align*}    
    Q_X (t)
    \leq t \int_{-2\pi/t}^{2\pi/t} \frac{1}{(1 + |\xi|/C_0)^\alpha} \di \xi
    \leq 2 t \int_{0}^{2\pi/t} \frac{\di \xi}{(\xi/C_0)^\alpha} 
    = \frac{2 t C_0^\alpha}{1 - \alpha} \Big( \frac{2\pi}{t} \Big)^{1 - \alpha}
    \, ,
  \end{align*}
  which implies that $Q_X (t) \leq (C t)^\alpha$, concluding the proof.
\end{proof}

\section{Conclusion}
\label{sec:conclusion}

We analyzed random-design linear prediction from a minimax perspective, by obtaining matching upper and lower bounds on the risk under weak conditions.
This revealed that the hardness of the problem is characterized by the distribution of leverage scores, and that Gaussian design is almost the most favorable one in high dimension.

The upper bounds relied on a study of the lower tail and negative moments of empirical covariance matrices.
We showed a general lower bound on this lower tail in dimension $d \geq 2$, as well as a matching upper bound under a necessary %
regularity condition on the design.
The proof of this result relied on the use of PAC-Bayesian smoothing of empirical processes, with refined non-Gaussian smoothing distributions.

It is worth noting that the upper bound of Theorem~\ref{thm:lower-tail-covariance} on the lower tail of $\lamin (\wh \Sigma_n)$ requires $n \geq 6 d$;
the approach used here is not sufficient to obtain meaningful bounds for (nearly) square matrices, whose aspect ratio $d/n$ is close to $1$.
It could be interesting to see if the bound of Theorem~\ref{thm:lower-tail-covariance} can be extended to this case (for instance in the case of centered, variance $1$ independent coordinates with bounded density, as in Section~\ref{sec:small-ball-indep}%
),
by leveraging the techniques from \cite{rudelson2008littlewood,rudelson2009smallest,tao2009inverse,tao2009littlewood}.

\paragraph{Acknowledgements.}

The author would like to thank two anonymous referees and an associate editor for very helpful comments that improved the quality of this paper.

\paragraph{Funding.}
Part of this work was carried at Centre de Mathématiques Appliquées, École polytechnique, France, and supported by a public grant as part of the Investissement d'avenir project, reference ANR-11-LABX-0056-LMH, LabEx LMH. Part of this work was carried out at the Machine Learning Genoa center, Universit\`a di Genova, Italy.

\bibliographystyle{alpha}
\newcommand{\etalchar}[1]{$^{#1}$}

\end{document}